\newcommand{\abs}[1]{{\left|#1\right|}}
\newcommand{\norma}[1]{{\left\Vert#1\right\Vert}}
\numberwithin{equation}{section}
\def\XXint#1#2#3{{\setbox0=\hbox{$#1{#2#3}{\int}$}
    \vcenter{\hbox{$#2#3$}}\kern-.5\wd0}}
\theoremstyle{definition}
\newtheorem{definizione}{Definition}[section]
\theoremstyle{plain}
\newtheorem{teorema}{Theorem}[section]
\newtheorem{prop}[teorema]{Proposition}
\newtheorem{cor}[teorema]{Corollary}
\theoremstyle{definition}
\newtheorem{esempio}{Example}[section]
\newtheorem{oss}[esempio]{Remark}
\DeclareMathOperator{\R}{\mathbb{R}}
\newcommand{\myfootnote}[2]{\begingroup
	\def\@makefnmark{}%
	\addtocounter{footnote}{-1}%
	\footnote{\textbf{#1} #2}
	\endgroup}
\title{Hessian operators, overdetermined problems, and higher order mean curvatures: symmetry and stability results}
\date{}
\begin{document}

%%%%%%%%%%%%%%%%%%%%%%%%%%%%%%%%%%%%%%%%
	\newcommand{\NN}{\mathbb{N}}
	\newcommand{\ZZ}{\mathbb{Z}}
\newcommand{\RR}{\mathbb{R}}

\newcommand{\pa}{\partial}
\newcommand{\ve}{\varepsilon}
\newcommand{\Si}{\Sigma}
\newcommand{\al}{\alpha}
\newcommand{\be}{\beta}
\newcommand{\Om}{\Omega}
\newcommand{\na}{\nabla}
\newcommand{\la}{\lambda}
	\newcommand{\La}{\Lambda}    
	\newcommand{\de}{\delta} 
    \newcommand{\te}{\theta}
    \newcommand{\nr}{\Vert}
    \newcommand{\cH}{{\mathcal H}}
\newcommand{\De}{\Delta}
\newcommand{\ol}{\overline}

\newcommand{\SerrinR}{R}

%%%%%%%%%%%%%%%%%%%%%%%%%%%%%%%%%%%%%%%%%%%

\author[1]{Nunzia Gavitone\thanks{nunzia.gavitone@unina.it}}

\author[2]{Alba Lia Masiello\thanks{masiello@altamatematica.it}}

\author[1]{Gloria Paoli\thanks{gloria.paoli@unina.it}}   

\author[3]{Giorgio Poggesi\thanks{giorgio.poggesi@adelaide.edu.au}}
%%%%%%%%%%%%%%%%%%%%
%4
%giorgio.poggesi@uwa.edu.au
%%%%%%%%%%%%%%%%%%%%%%%%%%%%%%%%%%

\affil[1]{Dipartimento di Matematica e Applicazioni ``R. Caccioppoli'', Universit\`a degli studi di Napoli Federico II, Via Cintia, Complesso Universitario Monte S. Angelo, 80126 Napoli, Italy.}

\affil[2]{Holder of a research grant from Istituto Nazionale di Alta Matematica "Francesco Severi" at Dipartimento di Matematica e Applicazioni "R. Caccioppoli", Via Cintia, Complesso Universitario Monte S. Angelo, 80126 Napoli, Italy.}

\affil[3]{Discipline of Mathematical Sciences, The University of Adelaide, Adelaide SA 5005, Australia.}

%%%%%%%%%%%%%%%%%%%%
%\affil[4]{Department of Mathematics and Statistics, The University of %Western Australia, 35 Stirling Highway, Crawley, Perth, WA 6009, Australia.}
%%%%%%%%%%%%%%%%%%%%%%%%%%%

\maketitle

{\footnotesize 
\noindent
{\it Key words and phrases.} Serrin-type overdetermined problems, Hessian operators, Alexandrov's Soap Bubble Theorem, Higher order mean curvatures, P-function, symmetry,  stability.

\noindent
{\it 2020 Mathematics Subject Classification.} 35N25, 53A10, 35G20, 35B35, 35A23.}

\begin{abstract}
{\footnotesize 
			It is well known that there is a deep connection between Serrin's symmetry result -- dealing with overdetermined problems involving the Laplacian-- and the celebrated Alexandrov's Soap Bubble Theorem (SBT) -- stating that, if the mean curvature $H$ of the boundary of a smooth bounded connected open set $\Om$ is constant, then $\Om$ must be a ball. 
			One of the main aims of the paper is to extend the study of such a connection to
            the broader case of overdetermined problems for Hessian operators and constant higher order mean curvature boundaries. Our analysis will not only provide new proofs of the higher order SBT (originally established by Alexandrov in~\cite{AlexandrovSBT,AlexandrovAMPA}) and of the symmetry for overdetermined Serrin-type problems for Hessian equations (originally established by Brandolini, Nitsch, Salani, and Trombetti in~\cite{bnst08}), but also bring several benefits, including new interesting symmetry results and quantitative stability estimates. 
			
			In fact, leveraging the analysis performed in the classical case (i.e., with classical mean curvature and classical Laplacian) by Magnanini and Poggesi in a series of papers, we will extend their approach to the higher order setting (i.e., with $k$-order mean curvature and $k$-Hessian operator, for $k \ge 1$) achieving both $L^2$-type and uniform (or Hausdorff)-type quantitative estimates of closeness to the symmetric configuration. Finally, leveraging the quantitative analysis in presence of bubbling phenomena performed in \cite{Poggesi_JMPA2025}, we also provide a quantitative stability result of closeness of almost constant $k$-mean curvature boundaries to a set given by the union of a finite number of disjoint balls of equal radii.
			
			In passing, we will also provide two alternative proofs of the result established by Brandolini, Nitsch, Salani, and Trombetti, one of which provides the extension to Hessian operators of the classical alternative proof of Serrin's symmetry result famously established by Weinberger in \cite{w71} for the classical Laplacian.
}
\end{abstract}

\section{Introduction}

\subsection{Overdetermined problems for Hessian operators}
In his pioneering paper \cite{s71}, Serrin proved  that a smooth domain $\Omega$ must be a ball if, for some constant $c >0$, there exists a solution $u \in C^2(\overline \Omega)$ to the following overdetermined problem:
\begin{equation}\label{serrin}
 \left\{
    \begin{array}{ll}
      \Delta u=1 &\text{in } \Omega\\\\
      u=0 &\text{on } \partial \Omega\\\\
      \dfrac{\partial u}{\partial \nu}=c &\text{on } \partial\Omega,
    \end{array}
  \right.
\end{equation}
where $\nu$ is the outward unit  normal to $\partial \Omega$.
The proof of this  rigidity result relies on a revisited version of the Alexandrov moving plane method and of a refinement of the maximum principle and Hopf's boundary point Lemma. 

In the same year,  Weinberger \cite{w71} provided an alternative proof of the same result in a very short way. The key ingredients of his approach  are twofold:  the $P$-function method, which ensures  that a suitable auxiliary function verifies a maximum principle, and  an integral equality, known as the Poho\v zaev identity.

Following these two seminal papers, various alternative proofs and generalizations to both linear and nonlinear operators have been developed (see, for instance, \cite{bgnt,bnst08,BNST,bnst09s,bh02,bk11,CP_IUMJ_2025,ch98,dpgx,se11,fk08,f12,fgk06,gl89,hp98,survey}). In particular in \cite{BNST},  the authors extended Serrin's  result to the $k$-Hessian operators. More precisely, they considered the following overdetermined problem:
\begin{equation}
    \label{torsionsk_intro}
   \begin{cases}
       S_k(D^2u)=\binom{n}{k} & \text{in } \Omega\\
       u=0 & \text{on } \partial\Omega\\
       |\nabla u|=R & \text{on } \partial\Omega,
   \end{cases} 
\end{equation}
where $S_k(D^2u)$, with $1\le k\le n$, denotes the $k$-th elementary symmetric function of the eigenvalues of the Hessian matrix $D^2u$ (see Section \ref{symmetric_functions} for its precise definition and properties) and $R>0$. We explicitly  observe that $S_1(D^2u)$ and $S_n(D^2u)$ correspond to the Laplacian and the Monge-Ampère operators, respectively. In particular, for $k=1$, problem \eqref{torsionsk_intro} reduces to the classical Serrin problem~\eqref{serrin}.

In \cite{BNST}, the authors established a rigidity result for problem \eqref{torsionsk_intro}:  if a solution $u \in C^2(\overline \Omega)$ to \eqref{torsionsk_intro} exists, then $u$ must be radially symmetric, and  $\Omega$ must be a ball.
Since for $k\neq 1$, $S_k(D^2 u)$ is  a fully nonlinear operator, their proof differs from those of  Serrin and Weinberger. Specifically, they rely on Newton's inequalities for the elementary symmetric functions of the eigenvalues of the Hessian matrix of $u$, combined with a Pohožaev-type identity for Hessian operators (see Section \ref{hessian_operators} for  details).

One of the aims of the present paper is to investigate the stability of problem
\eqref{torsionsk_intro}. 
%%%%%%%%%%%%%%%%%%%%%%%%%%%%%%%%%%%%%%%%%%%%

For the Laplace operator case ($k=1$), the  stability of the overdeterminated Serrin problem
has been analyzed in various papers using different techniques (see, for instance, \cite{aftalion_busca,bnst08, ciraolo_magnanini_sakaguchi,poggesi,MaPogNearlyCVPDE2020,MaPog_MinE2023}). In particular, in \cite{bnst08}, the authors derived a stability result, starting from the proof of the  rigidity result contained in \cite{BNST} and obtaining   quantitative integral identities.

Moreover, in  \cite{aftalion_busca, ciraolo_magnanini_sakaguchi},  stability is studied through a quantitative analysis 
 of the moving plane method. In this regard, we also refer to the survey paper \cite{ciraolo_roncoroni_survey} and the references therein. Additionally, in \cite{poggesi}, the stability of the Serrin problem for the Laplacian is studied leveraging a proof of symmetry based on a fundamental integral identity \cite[Theorem 2.1]{poggesi} that was inspired by the approach provided in \cite[Theorems I.1, I.2]{PayneSchaefer_1989}.
 
 %%%%%
 %via a $P$-function method. 
%%%%%%%%%%%%%%%%%%%%%%%%%

Finally,  in \cite{bnst09s}, the authors investigated the stability issue of the overdetermined problem \eqref{torsionsk_intro} in the case $k=n$, corresponding to the Monge-Ampère operator, based on the proof of symmetry that they established in \cite{BNST}.
%%%%%%%%%%%%%
%following the approach in \cite{bnst08}.
%%%%%%%%%%%%%

In this paper, we provide stability results that cover all the cases $1 \le k \le n$.

As an introductory achievement, we offer a new alternative proof of the rigidity result for \eqref{torsionsk_intro} established in \cite{BNST}, providing the extension to $k\ge 1$ of the approach famously pioneered by Weinberger in \cite{w71} for $k=1$.
This will be achieved in Theorem \ref{Serrin}; to that aim, we will consider the following auxiliary function 
\[
P=\displaystyle\frac{|\nabla u|^2}{2} \,-\, u ,
\]
where $u$ solves
%%%%%%%%%%%%%%%%%%%%
%{tors}
%%%%%%%%%%%%%%%%%%%%
\begin{equation*}
   \begin{cases}
       S_k(D^2u)=\binom{n}{k} & \text{in } \Omega\\
       u=0 & \text{on } \partial\Omega,
   \end{cases} 
\end{equation*}
and show that $P$ verifies a maximum principle (see Section \ref{P_function}).

Moreover, following the spirit of the analysis performed in \cite{poggesi} for $k=1$, we also provide various quantitative stability results.

%%%%%%%%%%%%%%%%%%%%%%%%%%%%%%%%%%%%%%%%%%%%%%%%%%

Our main result on the quantitative stability for the problem \eqref{torsionsk_intro} is the following. 
In what follows, $P(\Om)$ denotes the perimeter of $\Om$, $r_i$ the uniform radius of the interior sphere conditions and $d_\Omega$ the diameter of $\Omega$ (we refer to Section \ref{Preliminaries} for definitions).
	\begin{teorema}\label{thm:Stability result Serrin}
		Let $n\ge 2$ and $1 \le k \le n$.
        %\footnote{\color{red} Forse vale la pena includere anche $1$ e $n$? Per il caso $k=n$ potremmo aggiungere che: }
        Let $\Om \subset \RR^n$ be a $C^2$, bounded, connected, open set and let $u \in C^2 (\ol{\Om})$ be the solution to
        \begin{equation}\label{eq:Hessian Dirichlet problem}
			\begin{cases}
				S_k (D^2 u) = \binom{n}{k}  \quad &\text{ in } \Om
				\\
				u=0 \quad &\text{ on } \pa\Om .
			\end{cases}
		\end{equation}
        Let $z$ be a global minimum point of $u$ in $\ol{\Om}$ and set
		\begin{equation*}
			\SerrinR := \frac{1}{P(\Om)} \int_{\pa\Om} |\na u| d\cH^{n-1} .
		\end{equation*}
		Then, setting 
		\begin{equation}\label{eq:uniform deviation Serrin}
			\de:= \nr | \na u | - \SerrinR \nr_{L^\infty ( \pa \Om)} ,
		\end{equation} 
		we have that
		\begin{equation}\label{eq:stability theorem Serrin with rho_e - rho_i}
			\rho_e -\rho_i \le C
			\begin{cases}
				\de^{1/2}  & \quad \text{ if } n=2 ,
				\\
				\de^{1/2}  \log \left( \frac{ 1 }{ \de^{1/2}  } \right) & \quad \text{ if } n=3 ,
				\\
				\de^{\frac{1}{n-1}}  & \quad \text{ if } n \ge 4 ,
			\end{cases}
		\end{equation}
		where
		$$
		\rho_e=\max_{x\in \pa\Om}|x-z| \quad \text{ and } \quad \rho_i=\min_{x\in \pa\Om}|x-z| .
		$$

	The constant $C$ in \eqref{eq:stability theorem Serrin with rho_e - rho_i}  can be explicitly estimated only in terms of $n$, the radius $r_i$ of the uniform interior sphere condition of $\pa\Om$,
		and the diameter $d_{\Om}$ of $\Om$.
	\end{teorema}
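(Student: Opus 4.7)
The plan is to follow the framework developed by Magnanini and Poggesi for the classical case $k=1$: derive a weighted integral identity combining the Poho\v zaev-type identity for $S_k$ with Newton's inequalities, thereby obtaining an $L^2$-type estimate measuring how much $u$ differs from the quadratic torsion profile of a ball centered at the minimum point $z$, and then convert this estimate, via Newtonian potential techniques, into the uniform bound \eqref{eq:stability theorem Serrin with rho_e - rho_i}. The starting observation is that the $P$-function $P = |\nabla u|^2/2 - u$, which attains its maximum on $\pa\Om$ by the argument of Section~\ref{P_function}, is identically equal to $R^2/2$ in the symmetric case; the hypothesis in \eqref{eq:uniform deviation Serrin} immediately yields $\abs{\max_{\pa\Om} P - R^2/2} = O(\de)$, so the quantitative task is to convert this boundary smallness into a bulk smallness of $\abs{D^2 u - I}^2$.

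The core step is the derivation of a fundamental identity. Testing the Poho\v zaev-type identity for the $k$-Hessian recalled in Section~\ref{hessian_operators} against the weight $-u \ge 0$ and exploiting the divergence structure encoded in the Newton tensor of $D^2 u$, combined with an integral form of Newton's inequalities for the elementary symmetric functions of the eigenvalues of $D^2 u$, I expect to produce an inequality of the form
$$
\int_\Om (-u)\, \mathcal{D}(D^2 u) \, dx \;\le\; C\, \de ,
$$
where $\mathcal{D}(D^2 u) \ge 0$ is a pointwise Newton-type defect vanishing exactly when all the eigenvalues of $D^2 u$ coincide, and $C$ depends only on $n$, $k$, and $d_\Om$. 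Since $S_k(D^2 u) = \binom{n}{k}$ forces the common eigenvalue to equal $1$, the defect $\mathcal{D}$ controls $\abs{D^2 u - I}^2$ pointwise (modulo positive lower bounds on $S_{k-1}(D^2 u)$ granted by the $k$-admissibility of the solution).

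Next, I would convert this weighted bound into $L^2(\Om)$-closeness of $u$ to the radial torsion profile $w(x) := \bigl( |x-z|^2 - R^2 \bigr)/2$: for the discrepancy $h := u - w$, two applications of a weighted Hardy--Poincar\'e-type inequality, with constant depending on $r_i$ and $d_\Om$, would propagate the control from $\abs{D^2 h}^2 = \abs{D^2 u - I}^2$ first to $|\na h|^2$ and then to $|h|^2$ in the weighted space $L^2(\Om, -u\, dx)$, yielding a bound of the form $\norma{h}_{L^2(\Om)} \le C\, \de^{1/2}$. The final passage from $L^2$ to uniform control proceeds as in \cite{poggesi, MaPogNearlyCVPDE2020}: since $u \equiv 0$ on $\pa\Om$, the boundary values of $w$ coincide with $-h|_{\pa\Om}$, and controlling the latter uniformly via Newtonian potential / Green's function estimates yields $\abs{|y-z|^2 - R^2} \le C\, \de^{\alpha_n}$ for every $y \in \pa\Om$, whence $\rho_e - \rho_i \le C\, \de^{\alpha_n /2}$ by elementary geometry combined with the uniform interior sphere condition. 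The dimension-dependent exponents in \eqref{eq:stability theorem Serrin with rho_e - rho_i}, together with the logarithmic correction in $n=3$, emerge precisely at this final step and reflect the different integrability of the fundamental solution of the Laplacian in dimensions $n=2$, $3$, and $\ge 4$.

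The main obstacle is the first step: unlike the linear case, the Poho\v zaev identity for Hessian operators is nonlinear and the divergence structure is encoded in the Newton tensor rather than directly in the equation itself, so producing a fundamental identity in which the boundary oscillation $\de$ appears linearly -- and not, say, through a quadratic or higher-order boundary term -- requires a careful choice of test function and a nontrivial integral version of Newton's inequalities that keeps track of the positivity of the Newton tensor. Once such an identity is available with the sharp linear dependence on $\de$, the remaining steps follow the Magnanini--Poggesi template, modulo technical adjustments to handle the degeneracies typical of Hessian operators and to ensure that all constants remain expressible in terms of $n$, $r_i$, and $d_\Om$ alone.
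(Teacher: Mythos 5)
Your proposal captures the correct skeleton of the paper's argument—derive a fundamental integral identity that bounds a weighted Newton-type defect of $D^2 u$ by $\de$, then upgrade this to a uniform estimate—but there is a genuine gap in the second half, and the route you sketch there would not deliver the stated exponents.

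For the first half, your sketch is directionally right: the paper's Theorem~\ref{teo_main} and Proposition~\ref{44} produce the inequality \eqref{dish}, and Proposition~\ref{step1} bounds its right-hand side by $C\,\de$. The ellipticity estimates \eqref{eq:explicit uniform ellipticity 1}--\eqref{eq:explicit uniform ellipticity 2} from Remark~\ref{ellip} are then what relate the Newton-type defect $L\bigl[\tfrac{|\na h|^2}{2}\bigr]$ to $|D^2 h|^2 = |D^2 u - I|^2$, yielding \eqref{eq:weighted Hessian and RHS}, i.e.\ $\int_\Om (-u)|D^2 h|^2\,dx \le C\,\de$. So your ``main obstacle'' is indeed overcome, though the devil is in the details of Proposition~\ref{step1}.

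The real divergence is in the passage to the uniform estimate. You propose going from $\nr (-u)^{1/2} D^2 h\nr_{L^2}$ to $\nr h\nr_{L^2(\Om)}$ via two weighted Hardy--Poincar\'e inequalities and then to $L^\infty$ via ``Newtonian potential / Green's function estimates.'' Two problems arise. First, the relevant linearized operator here is $L[v] = S_k^{ij}(D^2 u) v_{ij}$, not the Laplacian, so Newtonian potential estimates do not directly apply; one would need explicit Green's function bounds for $L$, a significantly harder object for $k>1$. Second, even for $k=1$ the exponents $\de^{1/2}$ ($n=2$), $\de^{1/2}\log(1/\de^{1/2})$ ($n=3$), $\de^{1/(n-1)}$ ($n\ge 4$) do not come from a generic $L^2 \to L^\infty$ upgrade; they emerge from the specific mechanism the paper employs. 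The crucial structural fact you do not use is that each component $h_i$ of $\na h$ solves the homogeneous equation $L[h_i]=0$ (since $S_k(D^2 u)$ is constant, \eqref{sk_constant} gives $S_k^{jl} u_{ijl} = 0$, and $q_{ijl}=0$). This lets the paper apply the new weighted Sobolev--Poincar\'e inequality for solutions of uniformly elliptic equations (Theorem~\ref{thm:Poincare for solutions}, proved in the Appendix via Caffarelli's generalized mean-value theorem) to each $h_i$ with base point $z$, getting $\nr \na h\nr_{L^r(\Om)} \le C \nr \de_{\pa\Om}^{1/2} D^2 h\nr_{L^2(\Om)}$ for $r$ up to $\tfrac{2n}{n-1}$. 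The uniform bound then follows by the interpolation inequality from \cite{MaPog_MinE2023} (Theorem~\ref{thm:Interpolation from MPMine + MPCVPDE}), which bounds $\max_{\ol\Om} h - \min_{\ol\Om} h$ directly by $\nr\na h\nr_{L^p(\Om)}$ with the dimension-dependent exponents you see in the statement. Your proposal bypasses both of these ingredients and would need substantial additional work to be made to close with the claimed rates.
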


We recall that the case $k=1$ of the above theorem was already addressed in \cite{poggesi, MaPogNearlyCVPDE2020, MaPog_MinE2023}, and, in such a particular case, finer results were established. In particular, the analysis in \cite{poggesi, MaPog_MinE2023} shows that in the case $k=1$, \eqref{eq:stability theorem Serrin with rho_e - rho_i} may be obtained with $\de = \nr | \na u | - \SerrinR  \nr_{L^1 (\pa\Om)}$, and, using $\de = \nr | \na u | - \SerrinR  \nr_{L^2 (\pa\Om)}$ one can achieve a doubling of the stability exponent (that is, \eqref{eq:stability theorem Serrin with rho_e - rho_i} with $\de^{1/2}$ replaced with $\de$); we believe that some of these improvements may be obtained also when $k > 1$, and we plan to address this in a forthcoming paper \cite{GMPP2}; in fact, in the present paper we prioritized obtaining a unified stability result (possibly not achieving the optimal rate of stability but) covering all the cases $1 \le k \le n$.

We mention that the case $k=n$ was previously studied in \cite{bnst09s} by using an alternative approach based on stability results for the isoperimetric inequality. In particular,  since  in \cite{bnst09s} the set  $\Omega$ is convex,   the constants do not depend on $r_i$. %; in fact, in the case $k=n$ such a dependence is not necessary since $\Omega$ is convex. 
We stress that also our approach may be carefully adapted in the convex case to remove the dependence on $r_i$ from the constants; however, this is out of the scope of the present paper and will be addressed in forthcoming research \cite{GMPP2}.

To prove Theorem \ref{thm:Stability result Serrin}, following the spirit of \cite{poggesi}, we first establish a key integral identity -- which also provides a further short proof of the rigidity result for Problem \eqref{torsionsk_intro} (see Remark \ref{oss:rigidty again}) --, and then perform a careful quantitative analysis exploiting various tools including trace type, Sobolev-Poincar\'{e} type, and interpolation type inequalities (see the Appendix).

\subsection{Higher order Soap Bubble-type theorems for Hessian operators}

	It is well known that there is a deep connection between Serrin's symmetry result and the celebrated Alexandrov's Soap Bubble Theorem (SBT) -- stating that, if the mean curvature $H$ of the boundary of a smooth bounded connected open set $\Om\subset\mathbb{R}^n$ is constant, then $\Om$ must be a ball \cite{AlexandrovSBT,AlexandrovAMPA}. As a matter of fact, the original proof of Serrin was based on the method of the moving planes, which was inspired by the so-called reflection principle introduced by Alexandrov to prove his SBT. Moreover, this connection became more evident thanks to the alternative proof (via integral identities) of the SBT provided by Reilly \cite{Reilly_IUMJ1977,Reilly_AMM1982}, which essentially consisted in considering the torsion problem (i.e., \eqref{eq:Hessian Dirichlet problem} with $k=1$) and looking at the condition $H = constant$ as an overdetermined-type condition for this PDE; in fact, this allows to prove the SBT in a way similar to that of the alternative proof of Serrin's result famously given by Weinberger. The connection between the Serrin symmetry result and the SBT became even clearer thanks to the analysis performed in \cite{MaPoggesiJAM2019,poggesi}.
	
Our aim is to extend the study of this connection to the case $k \ge 1$, which is surprisingly still missing despite the wide literature on overdetermined problems and constant higher order mean curvature surfaces. In particular, we aim to extend to the case $k \ge 1$ the approach introduced by Reilly \cite{Reilly_IUMJ1977,Reilly_AMM1982} and further developed in \cite{MaPoggesiJAM2019,poggesi} in the classical case $k=1$. We recall that different proofs of the higher order SBT -- stating that, if the $k$-mean curvature $H_k$ of the boundary of a
smooth bounded connected open set
$\Om\subset\mathbb{R}^n$ is constant, then $\Om$ must be a ball -- can be found for instance in \cite{AlexandrovSBT,AlexandrovAMPA,MontielRos_1991,Ros_RMIb1987}. We refer to Section \ref{mean_curvatures} for the definition of $H_k$ and additional details.
    
    Our analysis will not only provide a new proof of the higher order SBT, but also bring several consequences, including a more general statement and new interesting symmetry results relating \eqref{eq:Hessian Dirichlet problem} with $H_k$. Some of these symmetry results are summarized in the following
	
	\begin{teorema}\label{thm:symmetry results summary}
		Let $\Om \subset \RR^n$ be a $C^2$, bounded, connected, open set, and let $u\in C^2(\ol{\Om})$ be the solution to \eqref{eq:Hessian Dirichlet problem}.
	Set
	\begin{equation*}
		R:= \frac{1}{P(\Om)} \int_{\pa\Om} |\na u| \, d\cH^{N-1}  \quad \text{ and } \quad \hat{R}:= \frac{n|\Om|}{P(\Om)} .
	\end{equation*}
	Then, denoting by $H_k$ the $k$-th mean curvature, $k=1,\dots, n-1$, the following  are equivalent:
	\begin{enumerate}[label=(\roman*)]
		\item $\Om$ is a ball of radius $R$, $u(x)=\frac{|x|^2 - R^2}{2}$ (up to a translation), and $ R = \hat{R}$;
		\item $| \na u |$ is constant on $\pa\Om$;
		\item $H_k \ge 1/ R^k$ on  $ \pa\Om$;
		\item $H_k \ge 1/ \hat{R}^k$ on $\pa\Om$;
		\item $|\na u|^k H_k \ge 1$ on $\pa\Om$;
		\item $| \na u | = \langle x, \nu \rangle$ on $\pa\Om$, where $\nu$ is the outward unit normal.
	\end{enumerate}
	\end{teorema}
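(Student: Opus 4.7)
The plan is to first observe that the direction (i) $\Rightarrow$ (ii)--(vi) is an immediate computation on the ball $B_R$ centered at the origin with the candidate $u(x)=(|x|^2-R^2)/2$: since $D^2u=I$ one has $S_k(D^2u)=\binom{n}{k}$, and on $\pa B_R$ one reads off $|\na u|\equiv R$, $\langle x,\nu\rangle\equiv R$, $H_k\equiv 1/R^k$; moreover the divergence theorem applied to the vector field $x$ gives $\hat R=R$. Thus the substance of the theorem lies in the reverse implications.

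The cornerstone is (ii) $\Rightarrow$ (i), which is exactly the Weinberger-type symmetry result for the Hessian equation established earlier (Theorem~\ref{Serrin}) via the $P$-function $P=|\na u|^2/2-u$. I would reduce each of the remaining conditions to (ii) (or directly to (i)) by means of a Reilly/Pohožaev-type integral identity for the Hessian equation. Such an identity is obtained by exploiting the divergence-free property of the Newton tensor $S_k^{ij}(D^2u)$ together with the standard boundary reduction valid because $u=0$ and $\na u=u_\nu\nu$ on $\pa\Om$: integrating the pointwise relation $k\,S_k(D^2u)=\pa_i(S_k^{ij}(D^2u)u_j)$ on $\Om$ yields an identity schematically of the form
\begin{equation*}
n|\Om|=c_{n,k}\int_{\pa\Om}|\na u|^{k+1}H_k\,d\cH^{n-1}
\end{equation*}
for a suitable positive constant $c_{n,k}$ (see Section~\ref{hessian_operators}).

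For (v), the pointwise inequality $|\na u|^k H_k\ge 1$ on $\pa\Om$ gives $\int_{\pa\Om}|\na u|^{k+1}H_k\,d\cH^{n-1}\ge\int_{\pa\Om}|\na u|\,d\cH^{n-1}$; on the other hand, by the divergence theorem and the Newton--MacLaurin inequality $S_1(D^2u)\ge n\bigl(S_k(D^2u)/\binom{n}{k}\bigr)^{1/k}=n$, one has $\int_{\pa\Om}|\na u|\,d\cH^{n-1}=\int_\Om \De u\,dx\ge n|\Om|$. Combining with the identity above forces all these inequalities to be equalities, and the equality case of Newton--MacLaurin forces the eigenvalues of $D^2u$ to coincide pointwise, hence $D^2u=I$ and we are in case (i). The implications (iii) $\Rightarrow$ (i) and (iv) $\Rightarrow$ (i) are handled in the same fashion after a Jensen/Cauchy--Schwarz step (exploiting $\hat R\le R$, itself a consequence of Newton--MacLaurin applied to the divergence identity) to reduce the averaged curvature bound to a form compatible with the integral identity. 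Finally, (vi) $\Rightarrow$ (i) is obtained by plugging $|\na u|=\langle x,\nu\rangle$ into the same integral identity and combining with the Minkowski formula $\int_{\pa\Om}H_{k-1}\,d\cH^{n-1}=\int_{\pa\Om}\langle x,\nu\rangle H_k\,d\cH^{n-1}$, again forcing equality in Newton's inequality.

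The principal obstacle will be the equality case analysis for Newton's inequality in the bulk: pointwise equality in $S_{k-1}(\la)S_{k+1}(\la)=S_k(\la)^2$ must force all eigenvalues $\la$ of $D^2u$ to coincide throughout $\Om$, not only on $\pa\Om$. This relies on the strict admissibility of $u$ in the Gårding cone associated with $S_k$ and on the sharp form of Newton's inequality from Section~\ref{symmetric_functions}. A secondary subtlety is that for $k\ge 2$ the integral identity naturally involves the product $|\na u|^{k+1}H_k$ rather than the averaged quantities $R,\hat R$ directly, so reducing (iii) and (iv) to the Pohozaev identity requires a careful Jensen-type pointwise-to-integral step that is nontrivial because $|\na u|$ is not a priori constant on $\pa\Om$.
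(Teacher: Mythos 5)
Your overall strategy---reduce the reverse implications to Newton-type integral inequalities whose equality cases force $D^2u=I$---is close in spirit to the paper's, and your treatment of $(ii)\Rightarrow(i)$ via Theorem~\ref{Serrin} is exactly right. However, your central ``Reilly/Poho\v zaev-type identity'' is misstated in a way that matters. What the divergence structure $k\,S_k(D^2u)=\partial_i(S_k^{ij}u_j)$ combined with \eqref{hksk} actually gives is $n|\Om|=\int_{\pa\Om}H_{k-1}|\na u|^{k}\,d\cH^{n-1}$ (note the index shift to $H_{k-1}$ and the power $k$). The quantity you write, $\int_{\pa\Om}H_k|\na u|^{k+1}\,d\cH^{n-1}$, equals $\frac{n}{\binom{n}{k+1}}\int_\Om S_{k+1}(D^2u)\,dx$, which is only bounded above by $n|\Om|$ after a Newton step using $S_k(D^2u)=\binom{n}{k}$; it is not an identity. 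Your arguments for (v), (iii), (iv) are salvageable once you replace the ``identity'' by this inequality and exploit its equality case together with your Jensen/H\"older step---indeed, this is exactly what the paper's fundamental identity \eqref{eq:Fundamental identity higher oreder SBT} is packaging, with the nonnegative remainders $\int_\Om L[P]$, $\widetilde{\mathcal{D}}_1$, $\widetilde{\mathcal{D}}_2$ making the one-sidedness explicit (see Remark~\ref{rem:Positivity of deficits} and Corollaries~\ref{cor:Higher order SBT with inequality}, \ref{cor:new overdetermined problem}). So (iii)--(v) are essentially correct but imprecisely presented.

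The genuine gap is in $(vi)\Rightarrow(i)$. Plugging $|\na u|=\langle x,\nu\rangle$ into the corrected identity yields $n|\Om|=\int_{\pa\Om}H_{k-1}\langle x,\nu\rangle^k\,d\cH^{n-1}$, but Minkowski's formula \eqref{minkowski} controls $\int_{\pa\Om}H_{k-1}\langle x,\nu\rangle\,d\cH^{n-1}$ (linear in $\langle x,\nu\rangle$, not the $k$-th power), and since $\langle x,\nu\rangle$ is neither signed nor constant a priori, the H\"older-type monotonicity you used for (iii)--(v) is not available here. Combining with the Minkowski formula as you suggest does not produce a one-sided inequality whose equality case can be read off. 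The paper handles (vi) by a different device: the $(-u)$-weighted Poho\v zaev-type identity \eqref{principal_equality}, whose right-hand side is $\frac{1}{n}\int_{\pa\Om}H_{k-1}|\na u|^{k+1}\left(|\na u|-\langle x,\nu\rangle\right)d\cH^{n-1}$ and vanishes under (vi), forcing the two nonnegative bulk integrals $\int_\Om(-u)L[P]\,dx$ and $\int_\Om(-u)\left[\frac{\De u}{n}-\frac{S_k(D^2u)}{\binom{n}{k}}\right]dx$ to vanish, whence $L[P]\equiv 0$ and Theorem~\ref{L[P]} concludes. You would need that weighted identity (or an equivalent substitute) to close case (vi); the route you sketch does not provide one.
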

 
	Several comments are in order. 
	
	The implication $(i) \iff (ii)$ is the Serrin-type symmetry result for Hessian equations originally proved in \cite{BNST}, of which we will provide two alternative proofs: the first by extending to $k \ge 1$ the approach initiated by Weinberger \cite{w71} for $k=1$, and the second extending to $k \ge 1$ the approach used in \cite{poggesi} for $k=1$. The last approach also gives, as a bonus, the implication $(i) \iff (vi)$, following the spirit of \cite[Theorem 2.10]{Poggesi_TAMS2024} for $k=1$.
	
	The implication $(i) \iff (iii)$ and $(i) \iff (iv)$ are new higher order soap bubble-type results that extend to $k \ge 1$ the result established in \cite[Theorem 2.2]{MaPoggesiJAM2019} for $k=1$. Notice that, for $k=1$, the divergence theorem shows that $R = \hat{R}$; we stress that, for $k \ge 1$, the implication $(i) \impliedby (iii)$ provides a more general statement than $(i) \impliedby (iv)$, as $R \ge \hat{R}$ (see Remark \ref{rem: R and hat R} for details). The classical statement of the higher order soap bubble theorem follows as an immediate corollary of the implication $(i) \impliedby (iii)$, as shown by Corollary \ref{cor:Higher order SBT with inequality}. A key step towards the proof of $(i) \impliedby (iii)$ is establishing the fundamental integral identity \eqref{eq:Fundamental identity higher oreder SBT}, which provides the extension to $k \ge 1$ of the fundamental integral identity established in \cite[Theorem 2.2]{MaPoggesiJAM2019} for $k=1$; in fact, it is easy to check that, for $k=1$, \eqref{eq:Fundamental identity higher oreder SBT} returns \cite[Identity (2.6))]{MaPoggesiJAM2019}. 
	
	The implication $(i) \iff (v)$ provides, as a particular case, the symmetry for the Hessian Dirichlet problem \eqref{eq:Hessian Dirichlet problem} under the overdetermined condition
	\begin{equation}\label{eq:INTRO:alternative overdetermination}
		|\na u| = \frac{1}{H_k^{1/k}} \quad \text{ on } \pa\Om;
	\end{equation}
	this extends to $k \ge 1$, the result established in \cite[(i) of Theorem 2.4]{MaPoggesiJAM2019}.
	We mention that, for $k=1$, this overdetermined problem has been also studied in the two-phase setting in \cite{Cavallina_IUMJ2022}.

\subsection{Stability for higher order Soap Bubble-type theorems}

   For completeness, we study the stability issue for the $k$-th order SBT, using the fine stability analysis performed in \cite{Poggesi_JMPA2025, MaPogNearlyCVPDE2020} for the classical SBT (i.e., with $k=1$) with remarkably weak $L^1$-type deviations of the form
	\begin{equation}\label{eq:intro:weak deficit for SBT alla Po and MP}
		\int_{\pa \Om} \left( \frac{1}{\hat{R}} - H \right)^+ \, d\cH^{n-1} , 
	\end{equation}
	where $\hat{R} := n|\Om|/P(\Om)$ and $\left( \frac{1}{ \hat{R} } - H \right)^+ := \max\left\lbrace 0 , \frac{1}{ \hat{R} } - H \right\rbrace$ is the positive part of the difference between the mean curvature $H$ (that is, $H_1$) of $\pa\Om$ and $1 / \hat{R}$. 
	In fact, this type of deviations crucially allows to extend the results from the classical setting (i.e., with $k=1$) to the case $k \ge 1$, leading to remarkable results with minimal effort. These results need the requirement of  $k$-convexity of the  domains (i.e., such that $H_k \ge 0$): we stress that this assumption is in the spirit of the analysis performed in \cite{CiraoloMaggi_CPAM2017} in the case $k=1$. We mention that studying the stability issue for the higher order SBT directly leveraging the fundamental identity in Theorem \ref{thm:Higher oreder SBT Fundamental Identity} may lead to alternative results that avoid the $k$-convexity restriction (as done in \cite{Poggesi_JMPA2025, MaPogNearlyCVPDE2020} for $k=1$), and this is addressed in the forthcoming paper \cite{GMPP2}.

	The following theorem can be obtained exploiting the foundamental identity presented in \cite[Theorem 3.7]{MaPogNearlyCVPDE2020} and the subsequent improvement established in \cite{MaPog_MinE2023}.
	
	\begin{teorema}\label{thm:stability k-SBT one ball}
		Let $\Om \subset \RR^n$ be a $C^2$, bounded, connected, open set with $H_k \ge 0$ and $1\leq k\leq (n-1)$. Set
		$$\hat{R}:= \frac{n|\Om|}{P(\Om)}$$ 
		and
		\begin{equation*}
			\de:= \int_{\pa \Om} \left( \frac{1}{\hat{R}^k} - H_k \right)^+ \, d\cH^{n-1} .
		\end{equation*}
		
		Then, there exists a point $z\in\Om$ such that
		\begin{equation}\label{eq:L2 stability for k-SBT}
			\nr |x-z| - \hat{R} \nr_{L^2(\pa\Om)} \le C \de^{1/2} \quad \text{ for any } n\ge 2 ,
		\end{equation}
		\begin{equation}
			\label{eq:stability HK rhoei}
			\rho_e - \rho_i  \le 
			C \,
			\begin{cases}
				\de^{1/2},  \ &\mbox{if } n =2, \, 3, 
				\\
				\de^{1/2} \max \left[ \log \left( \frac{1  }{ \de^{1/2} } \right) , 1 \right],   \ &\mbox{if } n =4 ,
				\\
				\de^{\frac{1}{N-2}},  \ &\mbox{if } n \ge 5 ,
			\end{cases}
		\end{equation}
		where we set
		$$
		\rho_e=\max_{x\in \pa\Om}|x-z| \quad \text{ and } \quad \rho_i=\min_{x\in \pa\Om}|x-z| .
		$$
		The constants $C$ in \eqref{eq:L2 stability for k-SBT} and \eqref{eq:stability HK rhoei} can be explicitly estimated only in terms of $n$, the radius $r_i$ of the uniform interior sphere condition of $\pa\Om$,
		%%%%
		%the $C^{1,1}$ regularity of $\Om$, 
		%%%%%%%%%%%%
		and the diameter $d_{\Om}$ of $\Om$.
	\end{teorema}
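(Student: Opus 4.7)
The plan is to reduce the $k$-th order stability problem to the classical case $k=1$ already settled in \cite{MaPogNearlyCVPDE2020, MaPog_MinE2023}, by means of the Newton-Maclaurin inequalities. The crucial observation is that both deficits -- the one here based on $H_k$ and the one in the $k=1$ case based on $H=H_1$ -- are $L^1$-type deviations involving positive parts of mean-curvature differences, so a \emph{pointwise} comparison between the integrands will suffice to transfer all existing results.

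First I would invoke Newton-Maclaurin, which, under $k$-convexity, gives the pointwise bound $H_1 \ge H_k^{1/k}$ on $\pa\Om$. Combining this with the elementary factorization $a^k - b^k = (a-b)\sum_{j=0}^{k-1} a^{k-1-j} b^j$, applied with $a = 1/\hat{R}$ and $b = H_1$ on the sublevel set $\{H_1 \le 1/\hat{R}\}$ (outside of which the quantity of interest vanishes), I would deduce the pointwise estimate
$$
\left(\frac{1}{\hat{R}} - H_1\right)^+ \le \hat{R}^{k-1}\left(\frac{1}{\hat{R}^k} - H_k\right)^+ \quad \text{on } \pa\Om.
$$
Integrating over $\pa\Om$ and recalling that $\hat{R}\le d_\Om$, this delivers
$$
\de_1 := \int_{\pa\Om}\left(\frac{1}{\hat{R}} - H_1\right)^+ d\cH^{n-1} \,\le\, d_\Om^{k-1}\,\de ,
$$
where $\de_1$ is precisely the classical deficit \eqref{eq:intro:weak deficit for SBT alla Po and MP}.

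At this point I would directly apply the classical ($k=1$) stability results in \cite{MaPogNearlyCVPDE2020} -- based on the fundamental integral identity \cite[Theorem 3.7]{MaPogNearlyCVPDE2020} -- and the refinement \cite{MaPog_MinE2023}, with deficit $\de_1$: this yields a point $z \in \Om$ (the global minimum of the torsion potential) such that $\nr |x-z| - \hat{R}\nr_{L^2(\pa\Om)} \le C \, \de_1^{1/2}$, which combined with the above gives \eqref{eq:L2 stability for k-SBT}. The Hausdorff-type bound \eqref{eq:stability HK rhoei} is then obtained from the $L^2$-bound by the by-now standard trace/Sobolev/interpolation machinery developed in \cite{MaPogNearlyCVPDE2020} and recalled in the Appendix; all constants are controlled only by $n$, $r_i$, $d_\Om$, as $\hat{R}^{k-1}\le d_\Om^{k-1}$.

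The principal obstacle is conceptual rather than technical: one must ensure that the hypothesis ``$H_k\ge 0$'' is interpreted in a way that makes Newton-Maclaurin applicable -- i.e., that the principal curvatures of $\pa\Om$ lie in the G\r{a}rding cone $\Gamma_k$, which is the standard convention for $k$-convex hypersurfaces. Once this is fixed, the argument is a very clean reduction to the existing $k=1$ theory, with no PDE work required beyond what is already available in the classical setting.
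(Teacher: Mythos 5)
Your proposal is correct and follows essentially the same route as the paper. The key reduction is identical: use Newton--Maclaurin ($H_k \ge 0 \Rightarrow H_1 \ge H_k^{1/k} \ge 0$) and an elementary algebraic estimate to obtain the pointwise bound
\[
\left(\tfrac{1}{\hat{R}} - H_1\right)^+ \le \hat{R}^{k-1}\left(\tfrac{1}{\hat{R}^k} - H_k\right)^+ \quad\text{on }\pa\Om,
\]
then transfer to the classical $k=1$ stability machinery in \cite{MaPogNearlyCVPDE2020,MaPog_MinE2023} via the $k=1$ torsion potential. The paper's algebraic step uses the inequality $b^k - a^k \ge b^{k-1}(b-a)$ for $0\le a\le b$ with $a := H_k^{1/k}$, $b := 1/\hat{R}$, whereas you use the full factorization of $a^k - b^k$ and then replace $H_1^k$ by $H_k$ via Newton--Maclaurin; both yield the same intermediate inequality. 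You also correctly identify that $\hat{R}\le d_\Om/2$ controls the extra factor $\hat{R}^{k-1}$, and that mean-convexity (needed to control $M = \max|\na u|$ in the $k=1$ estimates) follows automatically from $H_k\ge 0$ through the same Newton--Maclaurin chain. The one point worth pinning down in a final write-up: when you invoke the $k=1$ results for the $L^2$ bound, the paper in fact uses the fundamental identity from \cite{MaPoggesiJAM2019} (to bound $\nr |\na u|-\hat{R}\nr_{L^2(\pa\Om)}$ by $M^2\hat{R}\,\de_1$) combined with the conversion $\nr |x-z|-\hat{R}\nr_{L^2(\pa\Om)}\lesssim \nr |\na u|-\hat{R}\nr_{L^2(\pa\Om)}$ from \cite[Theorem 5.1]{PaPoRo_MZ2024}; your phrasing bundles these into a single cited $k=1$ result, which is fine conceptually but the precise citation should reflect this two-step structure.
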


	All the results presented so far provide closeness of $\Om$ to a single ball.
	We recall that, in general, when studying the stability for Serrin-type and Alexandrov-type symmetry results, bubbling phenomena may arise (that is, configurations of domains tending to disjoint balls of equal radii, for which the deficit $\de$ tends to zero). In the above results, the factor that prevents the bubbling phenomena is the dependence of the constants in the stability estimates on the regularity of the domain (specifically, on the radius $r_i$ of the uniform interior sphere condition of $\pa\Om$). For $k=1$, a quantitative analysis in presence of bubbling phenomena -- that is, quantitative estimates with constants not depending on parameters associated to the regularity of the domain -- was performed
	(in the classical setting) 
	in \cite{bnst08} (for Serrin's problem) and in \cite{CiraoloMaggi_CPAM2017,JulinNi_2023,Poggesi_JMPA2025} (for the SBT). In particular, \cite{Poggesi_JMPA2025} provides a fine analysis with the weak deviation \eqref{eq:intro:weak deficit for SBT alla Po and MP}, and  the use of this weak deviation allows us to leverage the results in \cite{Poggesi_JMPA2025} even in the case of higher order mean curvatures.
	In fact, the following result
	%%%%%%%%%%%%%%%%%
	%leverages the analysis performed in {\color{red}[Poggesi 2024]} and 
	%%%%%%%%%%%%%%%%%
	can be obtained as a consequence of \cite[Theorem 1.4]{Poggesi_JMPA2025}

    In what follows, we denote with $\omega_n$ the volume of the unit ball in $\mathbb{R}^n$.
	
	\begin{teorema}\label{thm:Stability with bubbling for k-SBT}
		Let $n \ge 2$ and $1 \le k \le n-1$. Let $\Om \subset \RR^n$ be a $C^2$, bounded, connected, open set such that
		$		H_k \ge 0 $.
		
		Set 
		\begin{equation}\label{eq:def H_0 and R}
			\hat{R}:= \frac{n|\Om|}{P(\Om)} ,
		\end{equation}
		assume that $\hat{R} \ge1$, and consider the deviation
		\begin{equation*}
			\de:= \int_{\pa \Om} \left( \frac{1}{\hat{R}^k} - H_k \right)^+ \, d\cH^{n-1} .
		\end{equation*}

		There exist $m$ points $\hat{z}_i$ such that the $m$ balls $B_{\hat{R}} (\hat{z}_i)$ (centered at $\hat{z}_i$, with radius $\hat{R}$), $i = 1, \dots, m$, are disjoint and
		\begin{equation*}
			\mathcal{U} := \bigcup_{i=1}^m B_{\hat{R}} (\hat{z}_i) ,
		\end{equation*}
		satisfies
		\begin{equation}\label{eq:bubbling 2}
			\left| \Om \De \mathcal{U} \right| \le C \,  \de^{ \frac{\al}{ \max \left\lbrace 4 , n \right\rbrace }} ,
		\end{equation}
		\begin{equation}\label{eq:bubbling 3}
			\max_{x\in \pa \mathcal{U} } \delta_{\pa\Om} (x) \le C \, \de^{\frac{\al}{4}},
		\end{equation}
		\begin{equation}\label{eq:bubbling 4}
			\left| | \pa \Om| - \left| \pa \mathcal{U} \right| \right| \le C \,  \de^{ \frac{\al}{ \max \left\lbrace 4 , n \right\rbrace }} ,
		\end{equation}
		and the number $m$ is bounded from above by means of
		\begin{equation}\label{eq:bubbling 5}
			m \le \frac{|\Om|}{\omega_n | \hat{R} - C \, \de^{\frac{\al}{4}} |^n} ;
		\end{equation}
		here, $\al:= \frac{2}{2n+7}$ and the constants $C$ appearing in \eqref{eq:bubbling 2}, \eqref{eq:bubbling 3}, \eqref{eq:bubbling 4}, and \eqref{eq:bubbling 5} can be explicitly computed and only depend on the dimension $n$ and (an upper bound on) the diameter $d_\Om$.
	\end{teorema}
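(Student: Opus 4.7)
The plan is to reduce to the classical ($k=1$) case already handled in \cite[Theorem 1.4]{Poggesi_JMPA2025}. Since the hypothesis on the domain (encoded by the $k$-convexity condition $H_k \ge 0$) and the volume/perimeter quantity $\hat R = n|\Om|/P(\Om)$ are common to both settings, and the ``conclusions'' in \cite[Theorem 1.4]{Poggesi_JMPA2025} have precisely the form stated in \eqref{eq:bubbling 2}--\eqref{eq:bubbling 5}, the whole task boils down to controlling the classical mean-curvature deficit
\[
\tilde\de := \int_{\pa\Om} \left( \frac{1}{\hat R} - H \right)^+ d\cH^{n-1}
\]
in terms of the higher order deficit $\de$, and then invoking the cited theorem as a black box.

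The key pointwise comparison is
\[
\left( \frac{1}{\hat R} - H \right)^+ \le \hat R^{k-1} \left( \frac{1}{\hat R^k} - H_k \right)^+ \quad \text{on } \pa\Om ,
\]
and rests on two ingredients. First, the Newton--Maclaurin inequalities, available under the $k$-convexity assumption (which places the principal curvatures in the $k$-th G\aa{}rding cone $\Gamma_k$), yield $H = H_1 \ge H_k^{1/k}$ pointwise, so one can replace $H$ by $H_k^{1/k}$ on the left-hand side. Second, the elementary factoring identity $a^k - b^k = (a-b) \sum_{j=0}^{k-1} a^{k-1-j} b^j$ used with $a = 1/\hat R$ and $b = H_k^{1/k} \ge 0$, combined with the trivial lower bound $\sum_{j=0}^{k-1} a^{k-1-j} b^j \ge a^{k-1} = \hat R^{-(k-1)}$, inverts the relation whenever $1/\hat R^k > H_k$, producing exactly the factor $\hat R^{k-1}$. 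Integrating and using the isoperimetric-type bound $\hat R \le C(n) |\Om|^{1/n} \le C(n) d_\Om$, I obtain $\tilde\de \le C(n, d_\Om)\, \de$.

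At this point, the hypotheses of \cite[Theorem 1.4]{Poggesi_JMPA2025} are all met: $\Om$ is $C^2$, bounded, connected, open, with $H \ge 0$ (again by Maclaurin), and $\hat R \ge 1$ by assumption. That theorem, applied to $\tilde\de$, produces the centers $\hat z_i$, the union $\mathcal U$, and power-type estimates of the form $\tilde\de^{\al/\max\{4,n\}}$, $\tilde\de^{\al/4}$ with $\al = 2/(2n+7)$; plugging in $\tilde\de \le C\,\de$ immediately yields \eqref{eq:bubbling 2}--\eqref{eq:bubbling 5}, with constants depending only on $n$ and $d_\Om$, as claimed. The main (and essentially the only) conceptual step is the pointwise comparison above; in particular, I expect the only subtle point to be checking that the assumption ``$H_k \ge 0$'' in the paper's convention indeed suffices to activate Maclaurin's inequality, after which the rest of the proof is a one-line algebraic reduction followed by a black-box citation.
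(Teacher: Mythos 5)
Your proposal is correct and is essentially the paper's own proof: the paper also reduces to the classical $k=1$ case by establishing the pointwise inequality $\left(\tfrac{1}{\hat R}-H\right)^+\le\left(\tfrac{1}{\hat R}-H_k^{1/k}\right)^+\le\hat R^{\,k-1}\left(\tfrac{1}{\hat R^k}-H_k\right)^+$ (this is \eqref{eq:inequality RHS SBT FROM CLASSICAL TO K VIA NEWTON}, proved via the Newton--Maclaurin chain and the elementary estimate $b^k-a^k\ge b^{k-1}(b-a)$ for $0\le a\le b$) and then invoking \cite[Theorem 1.4]{Poggesi_JMPA2025} as a black box, together with $\hat R\le d_\Om/2$ to absorb $\hat R^{\,k-1}$ into the constant. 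The one point you flagged as subtle -- whether $H_k\ge 0$ on a closed embedded hypersurface suffices to activate Maclaurin's inequalities -- is treated in the same implicit way in the paper (see \eqref{eq:NewtonIneq for H_k complete chain}), so this is not a divergence from the paper's argument.
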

Let us observe that the assumption $\hat{R} \ge 1$ is always satisfied up to a dilation.

To our knowledge, this is the first result in the literature that provides quantitative estimates of bubbling phenomena for higher order mean curvatures. We mention that a qualitative stability result was recently obtained in \cite{santilli2024finite}. Alternative quantitative estimates of closeness to a single ball (that is, in absence of bubbling phenomena) can be found in \cite{scheuer2025stability}. 

%%%%%%%%%%%%%%%%%%%%%%%%%%%%%%%%%%%%%%%%%%%%%%%%%%%%%%%%%%%%%%%%%%%%%%%%%

\subsection{Organization of the paper}
The rest of the paper is organized as follows.

In Section \ref{Preliminaries}, we set the notation and recall some known results about $k$-elementary symmetric functions, mean curvatures, quermassintegrals and $k$-Hessian operators. 

In Section \ref{section:overdeterminated} we deal with Serrin-type overdetermined problems for Hessian operators. Section \ref{P_function} is devoted to the study of the $P$-function and provides the extension of Weinberger's proof \cite{w71} to the broader case of Hessian operators. Section \ref{stability_issue} is devoted to the proof of Theorem \ref{thm:Stability result Serrin}, which provides quantitative stability estimates for Serrin-type overdetermined problems for Hessian operators. 

In Section \ref{section:SBT}, we study higher order soap bubble-type results and
their connection with
%%%%%%%%%%%%%%%%%
%%%%%%%%%%%%%%%%%%%%%%%%
overdetermined problems for Hessian operators.
Section \ref{subsec:SBTandSerrin type symmetry results} is devoted to the symmetry results and contains the proof of Theorem \ref{thm:symmetry results summary}.
Section \ref{subsec:SBT stability} is devoted to the stability issue of higher order soap bubble-type results and contains the proof of Theorems \ref{thm:stability k-SBT one ball} and \ref{thm:Stability with bubbling for k-SBT}.

Finally, in the Appendix \ref{appendix}, we collect some results which are useful in the study of the quantitative stability issue, including Sobolev-Poincar\'e type and interpolation type inequalities. In particular, we prove
%%%%%%%%%
a Sobolev-Poincar\'{e} type inequality for solutions of uniformly elliptic equations (with explicit estimates for the constant), which may be of independent interest.

\section{Notation and preliminaries}\label{Preliminaries}
Throughout this article, we denote by $|\cdot|$, $P(\cdot)$ and $\mathcal{H}^{n-1}(\cdot)$, the $n-$dimensional Lebesgue measure, the perimeter and the $(n-1)-$dimensional Hausdorff measure in $\mathbb{R}^n$, respectively. The unit open  ball in $\mathbb{R}^n$ will be denoted by $B_1$ and  $\omega_n:=|B_1|$.
Moreover, $\de_{\pa\Om} ( \cdot )$ denotes the distance function from $\pa\Om$, and $d_\Omega$ the diameter of the set $\Omega$ (i.e., the
supremum of the distances between any two points
in the set).

In what follows,
we say that $\Omega$ has the property of uniform interior sphere condition, if there exists $r_i>0$ such that for each $x\in \partial \Omega$ there exists a ball contained in ${\Omega}$ of radius $r_i$ such that its closure intersects $\partial \Omega$ only at $x$.
\subsection{\texorpdfstring{$k$}{k}-Elementary symmetric functions } \label{symmetric_functions}

Let $A=(a_{ij})$ be a matrix in $\mathcal{S}_n$, the space of real symmetric $n\times n$ matrices, and let 
$\lambda=(\lambda_1,...,\lambda_n)$ be the vector of its eigenvalues. We define, for $k\in
\left\{1,...,n\right\}$, the $k$-th elementary symmetric function
of $A$ 
$$S_k(A)=S_k(\lambda_1,...,\lambda_n)=\sum_{1\leq i_1
<\cdots<i_k\leq n } \lambda_{i_1}\cdots \lambda_{i_k}.$$ 
By definition,  $S_k(A)$ is  the sum of all $k \times k$ principal minors of the matrix $A$.

It is well known that the
operator $S_k^{1/k}(A)$, for $k=1,...,n$, is homogeneous of degree
$1$ and  concave when restricted to the cone
$$\Gamma_k=\{A\in \mathcal{S}_n\,:\,S_i(A)\geq 0\text{ for }i=1,\dots,k\}\,.$$
Using the  notation
$$
S_k^{ij}(A)
= \frac{\partial }{\partial a_{ij}}S_k(A),$$
 Euler's identity for homogeneous functions yelds
$$
S_k(A) = \frac{1}{k} \sum_{i,j}S_k^{ij}(A) a_{ij}.
$$ 
If $A$ is a  diagonal matrix then $S_k^{i,j}$ is also diagonal and we have
\begin{equation}
\label{sij_diag}
S_{k}^{ij}(A)=\begin{cases}
    S_{k-1}(i) \quad \text{if }i=j\\ 
    0 \quad \text{if }i\neq j,
\end{cases}
\end{equation}
where $S_k(i)$ denotes the $k$-th elementary symmetric function of $\lambda$ excluding $\lambda_i$. 

Whenever $A\in\Gamma_n$, the elementary symmetric  functions satisfy the well known  Newton's inequalities:
\begin{equation}\label{newton}
    \dfrac{S_1(A)}{n}\geq\dots\geq\left(\dfrac{S_{k-1}(A)}{\binom{n}{k-1}}\right)^{1/(k-1)}\geq\left(\dfrac{S_{k}(A)}{\binom{n}{k}}\right)^{1/(k)}\geq \left(\dfrac{S_{k+1}(A)}{\binom{n}{k+1}}\right)^{1/(k+1)}\geq \dots \geq  (S_n(A))^{1/n}.
\end{equation}
\subsection{Quermassintegrals and Mean Curvatures} \label{mean_curvatures}

Let $\Omega\subset\mathbb{R}^n$ be an open, bounded set with $C^{2}$ boundary.
The $k$-th mean curvature is defined as
\begin{equation}
\label{hk}
    H_{k}=\dfrac{S_{k}(\kappa_1, \dots, \kappa_{n-1})}{\binom{n-1}{k}},
\end{equation}
where $\kappa_1, \cdots,\kappa_{n-1}$ are the principal curvautures of $\partial\Omega$ at the point $x \in \partial\Omega$.  We stress that $H_1$ and $H_{n-1}$ are the mean and the Gaussian curvature of $\partial \Omega$, respectively. We also adopt the following conventions:
\begin{equation*}
    H_0=S_0=1, \quad\quad H_n=0.
\end{equation*}
A set $\Omega$ is $k$-convex (strictly $k$-convex) for  $k\in \{1, \dots, n-1\}$ if $H_j\geq 0$ ($>0$) at every point $x\in \partial \Omega$ for $j=1,\cdots, k$.
 
The $k$-th quemassintegral of $\Omega$ is defined as
\begin{equation}
\label{wk}
    W_k(\Omega)=\dfrac{1}{n} \int_{\partial \Omega} H_{k-1} d\mathcal{H}^{n-1},
\end{equation}
with the special cases  $W_0(\Omega)=\abs{\Omega}$ and $W_1(\Omega)=P(\Omega)/n$.

We also recall  the Aleksandrov-Fenchel inequalities
\begin{equation}\label{aleksandrov-fenchel}
	\left(\dfrac{W_j(\Omega)}{\omega_n}\right)^{\frac{1}{n-j}}\geq \left(\dfrac{W_i(\Omega)}{\omega_n}\right)^{\frac{1}{n-i}},
\end{equation}
for $0\leq i<j<n$, with equality if and only if $\Omega$ is a ball. In particular, for  $i=0$, we obtain 
\begin{equation}\label{aleksandrov-fenchel_j0}
	\left(\dfrac{W_j(\Omega)}{\omega_n}\right)^{\frac{1}{n-j}}\geq \left(\dfrac{|\Omega|}{\omega_n}\right)^{\frac{1}{n}}.
\end{equation}

 Minkowski's identity  states that:
\begin{equation}\label{minkowski}
    \int_{\partial \Omega} \langle x, \nu\rangle H_kd\mathcal{H}^{n-1}=n W_k(\Omega),
\end{equation}
where $\nu$ is the outward unit normal to $\partial \Omega$.

\subsection{\texorpdfstring{$k$}{k}-Hessian Operators} \label{hessian_operators}
Let $\Om \subset \RR^n$ be a $C^2$, bounded, connected, open set and let us consider $u \in C^2(\Omega)$. We define the  \emph{$k$-Hessian operator} $S_k\left(D^2u\right)$
 as the $k$-th elementary symmetric function of $D^2u$. 

We point out that $S_k(D^2 u)$ is a second-order differential operator that reduces to the Laplace operator for $k=1$ and to the Monge-Ampère operator for $k=n$.

In general, for $k\neq 1$, $S_k(D^2u)$ is not elliptic
unless we consider the restriction to the class of $k$-convex functions (see for instance \cite{caffarelli})
$$\Phi_k^2(\Omega)=\left\{u\in C^2(\Omega)\,:\,S_i(D^2u)\geq 0 \text{
in }\Omega, i=1,2,...,k \right\}.$$ In particular 
$\Phi_n^2(\Omega)$ is equal to  the class of $C^2(\Omega)$
convex functions.

A function $u \in C^2(\Omega)$ is called strictly $k$-convex if the inequalities in the definition of the cone $\Phi_k^2(\Omega)$ hold strictly.

One can show that  $\left(S_k^{1j}(D^2u), \dots, S_k^{nj}(D^2u)\right)$ is divergence-free, allowing $S_k(D^2u)$ to be expressed in divergence form

\begin{equation}
    \label{skdiv}
    S_k(D^2u)=\frac{1}{k}\left(S_k^{ij}(D^2u) u_j\right)_i,
\end{equation}
where the indices $i, j$ denote partial derivatives. 
For convenience, when the argument of $S_k^{ij}$ is omitted, we assume  $S_k^{ij}=S_k^{ij}(D^2u)$.

If $u\in C^2(\Omega)$ and $t$ is a regular value of $u$, the $k$-Hessian operator can be related to the $k-1$-th mean curvature
on the boundary of $\{u\le t\}$ via the identity (see, for instance,  \cite{reilly73, trudinger97})  
\begin{equation}
    \label{hksk}
   \binom{n-1}{k-1} H_{k-1}=\frac{S_k^{ij}u_iu_j}{\abs{\nabla u}^{k+1}}.
\end{equation}
Moreover,
\begin{equation}\label{hksk2}
S_k(D^2 u)=\binom{n-1}{k}H_k |\nabla u|^k+\dfrac{S_k^{ij} u_i u_l u_{lj}}{|\nabla u|^2}
\end{equation}
\begin{equation}\label{S_n}
   S_n(D^2 u)=\dfrac{S_n^{ij} u_i u_l u_{lj}}{|\nabla u|^2} 
\end{equation}
Finally, we recall the Poho\v zaev identy for Hessian operators (see \cite{BNST,Tso_1990}).

\begin{prop}[Poho\v zaev identity] 
Let $\Omega$ be an open bounded set with $C^{2}$ boundary, and let $f\in C^1(\mathbb{R})$ be a non negative function. Let us define $F(u)=\int_u^0f(s)ds$. If  $u\in C^2(\Omega)\cap C^1(\overline{\Omega})$ is a  solution to
\begin{equation*}
   \begin{cases}
       S_k(D^2u)= f(u)& \text{in } \Omega\\
       u=0 & \text{on } \partial\Omega
   \end{cases}     
\end{equation*}
then, the following identity holds
\begin{equation}\label{poho_generale}
    \dfrac{n-2k}{k(k+1)} \displaystyle\int_{\Omega} S_k^{ij} u_i u_j\,dx+\dfrac{\binom{n-1}{k-1}}{k+1}\displaystyle\int_{\partial \Omega} H_{k-1}\langle x, \nu \rangle |
    \nabla u|^{k+1} d\mathcal{H}^{n-1}=n\displaystyle\int_{\Omega} F(u) \,dx
\end{equation}
\end{prop}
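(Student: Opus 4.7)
The plan is to multiply the PDE by the classical Pohožaev multiplier $\langle x, \nabla u\rangle$, compute the resulting integral in two different ways, and combine the result with an auxiliary identity obtained by using $u$ as a multiplier. The structural tools are the divergence form \eqref{skdiv}, the divergence-free property $\partial_i S_k^{ij}(D^2u)=0$ of the Newton-type tensor, Euler's identity $S_k^{ij}u_{ij}=kS_k$, and the boundary relation \eqref{hksk}, which on $\partial\Omega$ (where $u=0$ and hence $\nabla u=|\nabla u|\nu$) converts $S_k^{ij}u_i u_j$ into $\binom{n-1}{k-1}H_{k-1}|\nabla u|^{k+1}$.

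First, a straightforward integration by parts against the multiplier $u$, using \eqref{skdiv} together with $u=0$ on $\partial\Omega$, yields the warm-up identity
$$\int_\Omega S_k^{ij} u_i u_j \, dx = -k \int_\Omega u\, f(u) \, dx. \qquad (\star)$$
Similarly, multiplying the equation by $\langle x, \nabla u\rangle$, rewriting $f(u)\langle x, \nabla u\rangle = -\langle x, \nabla F(u)\rangle$, and integrating by parts (the boundary term vanishes since $F(0)=0$) leads to
$$I := \int_\Omega S_k(D^2 u) \langle x, \nabla u\rangle \, dx = n \int_\Omega F(u) \, dx.$$

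The heart of the proof is to produce two genuinely independent expressions for $I$, both involving the same interior quantity
$$J := \int_\Omega S_k^{ij} u_j\, x_l u_{li} \, dx.$$
The first expression arises from computing the divergence of $W^i := S_k^{ij} u_j \langle x, \nabla u\rangle$: a direct calculation gives $\partial_i W^i = kS_k\langle x,\nabla u\rangle + S_k^{ij} u_i u_j + S_k^{ij} u_j x_l u_{li}$, and then the divergence theorem, combined with \eqref{hksk} and $\nabla u = |\nabla u|\nu$ on $\partial\Omega$, yields
$$kI + \int_\Omega S_k^{ij} u_i u_j \, dx + J = \binom{n-1}{k-1} \int_{\partial\Omega} H_{k-1} \langle x, \nu\rangle |\nabla u|^{k+1} \, d\mathcal{H}^{n-1}.$$
The second expression starts from $I = \int_\Omega S_k\, x_l u_l \, dx$ and integrates by parts in $l$ (boundary term vanishes since $u=0$), reducing it to $-\int_\Omega u\,\langle x, \nabla S_k\rangle \, dx - n \int_\Omega u f(u) \, dx$. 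Using the divergence identity $\langle x, \nabla S_k\rangle = (S_k^{ij} x_l u_{lj})_i - kS_k$ (an immediate consequence of divergence-freeness together with Euler's identity) followed by a further integration by parts (again with vanishing boundary term), one obtains $\int_\Omega u\langle x,\nabla S_k\rangle \, dx = -J - k \int_\Omega u f(u) \, dx$. Substituting and invoking $(\star)$ yields
$$I = J + \tfrac{n-k}{k} \int_\Omega S_k^{ij} u_i u_j \, dx.$$

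Eliminating $J$ between the two expressions for $I$ and recalling $I = n\int_\Omega F(u)\,dx$ produces exactly \eqref{poho_generale}. The main obstacle I expect lies in handling the interior term $J$: because $S_k^{ij}$ is divergence-free only in the first index and its derivatives along other directions produce cumbersome tensorial terms, no single integration by parts directly relates $J$ to the boundary curvature integral. The decisive observation is that $J$ emerges through two structurally different integration-by-parts schemes -- one driven by the multiplier $\langle x, \nabla u\rangle$ and one driven by the multiplier $u$ applied to the auxiliary identity for $\langle x, \nabla S_k\rangle$ -- so their combination cleanly eliminates $J$ while leaving behind the precise curvature factor $H_{k-1}$ appearing in \eqref{poho_generale}.
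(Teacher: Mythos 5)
The paper does not actually prove this proposition: it is recalled as a known result with references to \cite{BNST,Tso_1990}, so there is no ``paper's own proof'' against which to compare. Your derivation, however, is a correct and complete self-contained proof, and it follows the standard multiplier strategy that underlies the cited literature.

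Checking the steps: the warm-up identity $(\star)$ follows immediately from the divergence form \eqref{skdiv} and $u=0$ on $\pa\Om$; the reduction $I = n\int_\Om F(u)\,dx$ uses $F'=-f$, $F(0)=0$, and an integration by parts of $\langle x,\nabla F(u)\rangle$. For the first expression for $I$: the computation $\pa_i W^i = kS_k\langle x,\nabla u\rangle + S_k^{ij}u_iu_j + S_k^{ij}u_jx_lu_{li}$ uses the divergence-free property $\pa_i S_k^{ij}=0$ and Euler's identity, and on $\pa\Om$ the boundary term collapses, via $\nabla u=(\pa u/\pa\nu)\nu$ and \eqref{hksk}, to $\binom{n-1}{k-1}H_{k-1}|\nabla u|^{k+1}\langle x,\nu\rangle$; this gives $(A)$. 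For the second expression: integrating $I=\int_\Om S_k\,x_lu_l\,dx$ by parts in $l$ (the boundary term vanishes since $u=0$), then applying the identity $\langle x,\nabla S_k\rangle = (S_k^{ij}x_lu_{lj})_i - kS_k$ and a further integration by parts (again zero boundary term, and using the symmetry of $S_k^{ij}$ to recognize $J$) together with $(\star)$, one gets $I = J + \tfrac{n-k}{k}\int_\Om S_k^{ij}u_iu_j\,dx$, which is $(B)$. Eliminating $J$ from $(A)$ and $(B)$ yields $(k+1)I + \tfrac{2k-n}{k}\int_\Om S_k^{ij}u_iu_j\,dx$ equal to the curvature boundary integral, and dividing by $k+1$ and moving terms reproduces \eqref{poho_generale} exactly. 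The only minor caveat is that forming $u_{ijl}$ in the key identity $\langle x,\nabla S_k\rangle = (S_k^{ij}x_lu_{lj})_i - kS_k$ implicitly uses interior regularity beyond $C^2$; this is harmless here because interior elliptic regularity for $k$-admissible solutions with $f\in C^1$ upgrades $u$ to (at least) $C^{2,\alpha}_{\mathrm{loc}}(\Om)$, and the statement itself already invokes only interior $C^2$ plus $C^1(\overline\Om)$, so the paper is sweeping the same technical point under the rug.
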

For  $f(u)=\binom{n}{k}$, the Poho\v zaev identy \eqref{poho_generale} simplifies to
\begin{equation}\label{poho}
    \int_{\Omega}(-u)\,dx=\frac{1}{n(n+2)} \int_{\partial \Omega}  H_{k-1}\langle x, \nu\rangle |\nabla u|^{k+1}d\mathcal{H}^{n-1}.
\end{equation}

\section{Overdetermined problems for Hessian operators}\label{section:overdeterminated}
\subsection{\texorpdfstring{$P$}{p}-function and extension of Weinberger's approach to Hessian operators}\label{P_function}
Let $\Omega \subset \R^n$ be an open, bounded, connect set with $C^{2}$ boundary, and consider $u\in C^{2}(\overline{\Omega})$ the solution to the following torsion problem for the $k$-th Hessian operator
\begin{equation}
    \label{torsion}
   \begin{cases}
       S_k(D^2u)=\binom{n}{k} & \text{in } \Omega\\
       u=0 & \text{on } \partial\Omega.
   \end{cases} 
\end{equation}
We define the linear differential operator:
\begin{equation}
\label{ell}
L[v]:=\left(S_k^{ij}(D^2u)v_j\right)_i= S_k^{ij}(D^2u)v_{ij} ,\end{equation}
and, in what follows, we consider the $P$-function:
\begin{equation}
\label{pfun}
  P=\frac{\abs{\nabla u}^2}{2}-u.  
\end{equation}
We first prove that $P$ verifies a differential inequality.
\begin{teorema}   
\label{L[P]}
Let $\Om \subset \RR^n$ be a $C^2$, bounded, connected, open set, let $u\in C^2(\overline{\Omega})$ be the solution to \eqref{torsion} and let $P$ be the function defined in \eqref{pfun}. Then,
  \begin{equation}
L[P](x) \ge 0 \quad \forall x \in \Omega, \quad  \forall k=1,\ldots,n
  \end{equation} 
  {Moreover, $L[P]=0$ if and only if $\Omega$ is a ball $B_R$ and $\displaystyle{u=\frac{\abs{x}^2-R^2}{2}}$} up to a translation.
\end{teorema}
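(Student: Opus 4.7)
The plan is to compute $L[P]$ by direct differentiation, simplify using the equation \eqref{torsion} and its first derivative, reduce the result to a purely algebraic expression in the eigenvalues of $D^2u$, and conclude by Newton's inequalities \eqref{newton}. From $P=|\nabla u|^2/2-u$, differentiation gives $P_j = u_l u_{lj} - u_j$ and $P_{ij} = u_{li}u_{lj} + u_l u_{lij} - u_{ij}$. Contracting with $S_k^{ij}(D^2u)$, Euler's identity yields $S_k^{ij} u_{ij} = k S_k(D^2u) = k\binom{n}{k}$, while differentiating the equation $S_k(D^2u) = \binom{n}{k}$ in $x_l$ gives $S_k^{ij}u_{lij}=0$. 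Hence
\[
L[P] = S_k^{ij}(D^2u)\, u_{li}\, u_{lj} - k\binom{n}{k}.
\]

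Next, I would fix a point $x\in\Omega$ and work in an orthonormal frame that diagonalizes $D^2u(x)$ with eigenvalues $\lambda_1,\dots,\lambda_n$. By \eqref{sij_diag}, $S_k^{ij}u_{li}u_{lj} = \sum_i \lambda_i^2 S_{k-1}(i)$. Using the elementary identity $\lambda_i S_{k-1}(i) = S_k - S_k(i)$ and Euler applied to $S_{k+1}$, namely $\sum_i \lambda_i S_k(i) = (k+1) S_{k+1}$, this sum rewrites as $S_1 S_k - (k+1) S_{k+1}$, so using $S_k = \binom{n}{k}$,
\[
L[P] = (S_1 - k)\binom{n}{k} - (k+1) S_{k+1}.
\]
Newton's inequalities \eqref{newton} now give $S_1/n \ge (S_k/\binom{n}{k})^{1/k} = 1$, whence $S_1 \ge n$; and, for $k<n$, $S_{k+1}/\binom{n}{k+1} \le (S_k/\binom{n}{k})^{(k+1)/k} = 1$, whence $(k+1) S_{k+1} \le (k+1)\binom{n}{k+1} = (n-k)\binom{n}{k}$. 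Combining,
\[
L[P] \ge (n-k)\binom{n}{k} - (k+1) S_{k+1} \ge 0,
\]
with the case $k=n$ being immediate since $S_{n+1}\equiv 0$.

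For the equality case, $L[P]\equiv 0$ in $\Omega$ forces pointwise equality in both Newton inequalities at every point, hence $\lambda_1 = \cdots = \lambda_n$ everywhere; combined with $S_k=\binom{n}{k}$ this gives $\lambda_i\equiv 1$, i.e., $D^2u\equiv I$ in $\Omega$. Then $u = (|x-x_0|^2-R^2)/2$ for some $x_0\in\RR^n$ and $R>0$, and $u=0$ on $\partial\Omega$ forces $\Omega = B_R(x_0)$. I expect the main obstacle to be spotting the clean algebraic reduction $\sum_i \lambda_i^2 S_{k-1}(i) = S_1 S_k - (k+1) S_{k+1}$ and verifying that the two complementary Newton inequalities (an upper bound on $S_{k+1}$ and a lower bound on $S_1$, both with respect to the fixed value of $S_k$) combine with exactly the right sign to yield nonnegativity; the rest is routine differentiation together with the standard use of $k$-convexity to ensure the relevant portion of Newton's chain applies.
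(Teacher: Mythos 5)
Your proof is correct and follows essentially the same route as the paper: compute $L[P]$, use $S_k^{ij}u_{lij}=0$, diagonalize, reduce via the identity $\sum_i\lambda_i^2 S_{k-1}(i)=S_1 S_k-(k+1)S_{k+1}$ (you derive it from $\lambda_i S_{k-1}(i)=S_k-S_k(i)$ and Euler, the paper from $\partial_{\lambda_i}S_{k+1}=S_k-\lambda_i S_{k-1}(i)$, which is the same thing), and finish with the two Newton inequalities. The only cosmetic difference is that you absorb $k=n$ into the same formula via $S_{n+1}\equiv 0$, whereas the paper treats $k=n$ separately using that $S_n^{ij}$ is the inverse of $D^2u$; both arrive at $L[P]=(\Delta u - n)S_n$.
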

\begin{proof}
First, let us compute $L[P]$. We have
$$\begin{aligned}
    L[P]=&\sum_{i,j=1}^n S_k^{ij}P_{ij}=
    \sum_{i,j,l} S_k^{ij}u_{li}u_{lj}+\sum_{i,j,l} S_k^{ij} u_l u_{lji} -k S_k(D^2u). 
\end{aligned}$$
Since $u$ is the solution to \eqref{torsion}, we obtain 

\begin{equation}\label{sk_constant}
    0=\frac{\partial S_k(D^2u)}{\partial x_l}= \sum_{i,j=1}^nS_k^{ij} u_{ijl}, \quad\forall l=1,\dots n,0=\frac{\partial S_k(D^2u)}{\partial x_l}= \sum_{i,j=1}^nS_k^{ij} u_{ijl}, \quad\forall l=1,\dots n,
\end{equation}
consequently,
\begin{equation}
\label{LP}
    L[P]=
        \sum_{i,j,l} S_k^{ij} u_{li}u_{lj} -kS_k(D^2u).
\end{equation}
For any $x\in \Omega$, by performing a rotation of the coordinate axes, we may assume that the Hessian matrix $D^2u$ is diagonal at the point $x$. Then, \eqref{LP} can be rewritten as:
\[
   L[P]=
        \sum_{i} S_k^{ii} u^2_{ii}-kS_k(D^2u),
\]
and, setting $\lambda_i=u_{ii}$, formula \eqref{sij_diag} implies

\begin{equation}
\label{LP2}
L[P]=\sum_{i}S_{k-1}(i)\lambda_i^2-kS_k(D^2u).
\end{equation}
Now, we must distinguish between the cases $1\le k <n$ and $k=n$.
If $1\le k<n$, using the properties of $k$-elementary symmetric functions, we get
\begin{equation*}
%\label{skid}
\frac{\partial}{\partial \lambda_i}S_{k+1}=S_k(D^2u)- S_{k-1}(i)\lambda_i.
\end{equation*}
Thus, \eqref{LP2} becomes:
\begin{equation}
\label{L}
L[P]=\Delta u \,S_k(D^2u)-(k+1)S_{k+1}(D^2u)-kS_k(D^2u).
\end{equation}
By Newton's inequalities and using the fact  that $u$ is the solution to \eqref{torsion}, we obtain:
$$
L[P](x)\ge n\binom{n}{k}-(k+1)\binom{n}{k+1}-k\binom{n}{k} =0,
$$
which proves the  claim  for $1\leq k<n$.
{Finally, to characterize the equality case for $1\le k<n$, we rewrite \eqref{L} as.
\begin{equation}
\label{L_new}
L[P]=n \binom n k \left[\left(\frac{\Delta u}{n}-\frac{S_k(D^2u)}{\binom n k}\right)+\frac{n-k}{n}\left(\frac{S_{k}(D^2u)}{\binom {n} {k}}-\frac{S_{k+1}(D^2u)}{\binom {n}{ k+1}}\right)\right].
\end{equation}

Applying  Newton's inequalities, we see that  $L[P]=0$ if and only if $D^2u$ is the identity matrix, concluding the proof in the case $1\le k<n$.} 

For $k=n$,  let us observe that $S_n(D^2u)=\text{det}(D^2u)=1$,  so $S_n^{ij}$ is the inverse matrix of $D^2u$. Therefore, 
\begin{equation}
    \label{inv1}
    \sum_j S_n^{ij}u_{jl}=\delta_{il},
\end{equation}
and 
\[
L[P]=\sum_{i}S_{n-1}(i)\lambda_i^2-nS_n(D^2u)= \Delta u \,S_n(D^2u)-nS_n(D^2u).
\]
Once again, Newton's inequality ensures that $L[P]\ge 0$ and $L[P]=0$ if and only if $D^2u$ is the identity matrix.
\end{proof}

\begin{oss}\label{ellip}
Exploiting the  regularity theory for Hessian equations, we prove that 
$L$ satisfies the uniform ellipticity condition
\begin{equation}\label{eq:explicit uniform ellipticity 1}
	\la | \xi |^2 \le  S_k^{ij}(D^2 u) \xi_i \xi_j \le \La | \xi |^2 \quad \text{ for any } x \in \Om, \, \xi \in \RR^n ,
\end{equation}
where $\la$ and $\La$ satisfy
\begin{equation}\label{eq:explicit uniform ellipticity 2}
C (n, d_\Om)^{-1} \le \la \le \La \le C (n,d_\Om) ,
\end{equation}
for a constant $C(n,d_\Om)$ only depending on $n$ and the diameter $d_\Om$ of $\Om$. 
Indeed, since $D^2 u\in \Gamma_k$,
we can arrange  its eigenvalues $\lambda=(\lambda_1,\dots,\lambda_n)$ in descending order, i.e. $\lambda_1\geq \cdots \geq \lambda_n$ and it holds, see \cite[Section $2.5$, Formula (iii) and Formula (vii)]{Wang2009}, 

$$S_{k}^{ii}(D^2u)\ge S_{k}^{11}(D^2u)$$
and 
$$S_{k}^{11}(D^2u)\ge C({n,k})\frac{S_k(D^2u)}{\lambda_1}.$$
Moreover,  by regularity theory for solution to the $k$-Hessian equation, see  \cite[Theorem 4.3]{chou}  and \cite[Theorem 3.1]{wang94}, there exists a positive constant $C$ depending only on $n$  and $\max_\Omega|u|$ such that
\begin{equation}\label{bound_w2}
\norma{D^2 u}\le C(n,\max_\Omega|u| ). 
\end{equation}
Moreover, by a comparison argument (see \cite{tso}), we have that
$$\max_\Omega|u|\le \frac{d^2_\Omega}{2} .$$
Hence, we can find a constant $C$, which only depends on $n$ and $d_\Omega$, such that
$$\lambda_1\le \norma{D^2u}\le C(n, d_\Omega).$$
On the other hand, we can use the same bound in \eqref{bound_w2} and obtain
%have that (see \cite[Section 2.4-2.5]{Wang2009})
\begin{equation}
    S_k^{ii}(D^2 u)\leq S_k^{nn}(D^2 u)=S_{k-1}(\lambda_1,\dots, \la_{n-1})\le C(n, d_\Omega).
\end{equation}

So, the  operator $L$ defined in \eqref{ell} is elliptic. 

Consequently, the maximum principle for linear elliptic operator and Theorem \ref{L[P]} imply that the function $P$ achieves its maximum on $\partial\Omega$. Moreover, the linearity of $L$ and the fact that $L[u]>0$ imply that also $\abs{\nabla u}$ achieves its maximum on $\partial\Omega$.

\end{oss}

To  prove the stability result presented in Theorem \ref{thm:Stability result Serrin}, we need the following. 
\begin{teorema} \label{h}
Let $\Om \subset \RR^n$ be a $C^2$, bounded, connected, open set. Let $u\in C^2(\overline{\Omega})$ be the solution to problem \eqref{torsion} with $1\le k \le n$. Let us define 
\begin{equation}
\label{q}
q= \frac{|x-z|^2}{2}
\end{equation}
and let $h=q-u$. Then, the following identities hold: 
\begin{itemize}
\item[(i)] $L[h]=(n-k+1)S_{k-1}(D^2 u)-k S_{k}(D^2 u)$  for $1<k\le n$ and $L[h]=0$ for $k=1;$ 
\item[(ii)]  $\displaystyle L\left[\frac{|\nabla h|^2}{2}\right]-L[h]=L[P].$
\end{itemize}
\end{teorema}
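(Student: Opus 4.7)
The plan is to exploit the linearity of $L$ together with the elementary identities for $S_k^{ij}$ recalled in Section~\ref{symmetric_functions} and the crucial fact, from \eqref{sk_constant}, that $S_k^{ij}u_{lij}=0$ since $S_k(D^2u)$ is constant in $\Om$. Concretely, for part (i) I would decompose $h=q-u$ and compute $L[q]$ and $L[u]$ separately; for part (ii) I would expand $|\nabla h|^2$ in terms of $|\nabla u|^2$, $q$, and $(x-z)\cdot\nabla u$, reducing everything to $L[P]$ and $L[h]$ via two simple cancellations.

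For part (i), since $q_{ij}=\delta_{ij}$, one has $L[q]=\sum_i S_k^{ii}(D^2u)$. Working at a point where $D^2u$ is diagonal (which I may do, since both sides are rotationally invariant) and using \eqref{sij_diag}, this becomes $\sum_i S_{k-1}(i)$; a standard combinatorial count, where each monomial of degree $k-1$ in the eigenvalues appears in exactly $n-k+1$ of the $S_{k-1}(i)$'s, gives $L[q]=(n-k+1)\,S_{k-1}(D^2u)$. On the other hand, Euler's identity from Section~\ref{symmetric_functions} yields $L[u]=S_k^{ij}(D^2u)\,u_{ij}=k\,S_k(D^2u)$. Subtracting,
\[
L[h]=(n-k+1)\,S_{k-1}(D^2u)-k\,S_k(D^2u),
\]
which is the claimed identity for $1<k\le n$. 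The case $k=1$ is immediate, since then $L=\Delta$ and $\Delta q=n=\Delta u$ by \eqref{torsion}.

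For part (ii), the identity $\nabla h=x-z-\nabla u$ gives, after expanding the square,
\[
\tfrac12|\nabla h|^2=q-(x-z)\cdot\nabla u+\tfrac12|\nabla u|^2,
\]
and substituting $q=h+u$ and $\tfrac12|\nabla u|^2=P+u$ turns this into
\[
\tfrac12|\nabla h|^2-h=2u+P-(x-z)\cdot\nabla u.
\]
Applying $L$ by linearity,
\[
L\!\left[\tfrac12|\nabla h|^2\right]-L[h]=2\,L[u]+L[P]-L\bigl[(x-z)\cdot\nabla u\bigr],
\]
so everything reduces to proving $L[(x-z)\cdot\nabla u]=2\,L[u]$. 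A direct computation yields $\partial_i\partial_j\bigl[(x_l-z_l)u_l\bigr]=2u_{ij}+(x_l-z_l)u_{ijl}$; contracting with $S_k^{ij}$, the second term vanishes by \eqref{sk_constant}, while the first yields $2\,S_k^{ij}u_{ij}=2k\,S_k(D^2u)=2\,L[u]$ via Euler's identity. This is the required cancellation and gives (ii).

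The computations are essentially algebraic, and I do not anticipate any serious obstacle: the only mildly delicate point is the identification $\sum_i S_k^{ii}(D^2u)=(n-k+1)\,S_{k-1}(D^2u)$, which is pure combinatorics on elementary symmetric functions, while the structural fact making part (ii) work is the divergence-free property encoded in \eqref{sk_constant}, which precisely kills the third-order-derivative term arising from expanding $L[(x-z)\cdot\nabla u]$.
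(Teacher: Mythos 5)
Your proof is correct. Part (i) is essentially the same computation as the paper's: both reduce to the identity $\sum_i S_k^{ii}(D^2u)=(n-k+1)S_{k-1}(D^2u)$, the only difference being that you prove it by the direct combinatorial count at a diagonalizing frame while the paper cites the corresponding formula from Wang's lecture notes. Your handling of $k=1$ via $L=\Delta$, $\Delta q = n = \Delta u$, is the right observation.

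Part (ii), however, takes a genuinely different route. The paper computes $L\left[\tfrac12|\nabla h|^2\right]$ head-on: it differentiates twice to get $S_k^{ij}(h_{lj}h_{li}+h_l h_{lji})$, kills the third-order term with \eqref{sk_constant}, and then expands $h_{li}=\delta_{li}-u_{li}$ in the resulting quadratic form, matching the four terms one by one against $L[h]$ and $L[P]$. You instead expand $\tfrac12|\nabla h|^2$ \emph{before} applying $L$, obtaining the pointwise algebraic identity $\tfrac12|\nabla h|^2 - h = 2u + P - (x-z)\cdot\nabla u$, and then reduce everything to the single observation $L[(x-z)\cdot\nabla u]=2L[u]$, which again follows from \eqref{sk_constant} and Euler's identity. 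The content is the same (both proofs ultimately rest on \eqref{sk_constant} killing the third-derivative term and on $S_k^{ij}u_{ij}=kS_k$), but your organization isolates the one structural cancellation $L[(x-z)\cdot\nabla u]=2L[u]$ and keeps the rest purely linear-algebraic, which is arguably cleaner and makes the role of $\eqref{sk_constant}$ more transparent. It also sidesteps the need to expand $S_k^{ij}h_{lj}h_{li}$ into four cross terms.
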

\begin{proof}
We first prove part $(i)$. By the definition of $h$, we compute:
\begin{equation}
    L[h]=\sum_{i,j}S_k^{ij}\left(
    \delta_{ij}-u_{ij}\right)=\sum_{i} S_k^{ii}-k S_k(D^2 u)= (n-k+1) S_{k-1}(D^2 u)-k S_k(D^2 u),
\end{equation}
where in the last equality we used \cite[Section 2.5, Formula (ii)]{Wang2009}:
\begin{equation}\label{wang}
    \sum_{i} S_{k+1}^{ii}=(n-k) S_{k}(D^2 u).
\end{equation}
To prove the second claim, we can differentiate $$ \dfrac{\partial }{\partial x_i}\left(\dfrac{|\nabla h|^2}{2}\right)=h_l h_{li}, \qquad \dfrac{\partial^2 }{\partial x_i\partial x_j}\left(\dfrac{|\nabla h|^2}{2}\right)=h_{lj} h_{li}+h_l h_{lji},$$
and observe that \eqref{sk_constant} implies that $$\sum_{i,j,l}S_k^{ij}h_l h_{lji}=0 ;$$ hence, we have
\begin{align*}
    \displaystyle L\left[\frac{|\nabla h|^2}{2}\right]=&\sum_{i,j,l}S_k^{ij}h_{lj}h_{li}=\sum_{i,j,l}S_k^{ij}(q_{lj}-u_{lj})(q_{li}-u_{li})=\\&=\sum_{i,j,l}S_k^{ij}q_{lj}q_{li}-S_k^{ij}u_{lj}q_{li}-S_k^{ij}q_{lj}u_{li}+S_k^{ij}u_{lj}u_{li}=\\ &
    =(n-k+1) S_{k-1}(D^2 u)-k S_k(D^2 u)-L[u]+\displaystyle L\left[\frac{|\nabla u|^2}{2}\right] =L[h]+L[P].
\end{align*}

\end{proof}

\begin{oss}
\label{rem_h0}
    Let us observe that Claim $(ii)$ in Theorem \ref{h} allows us to write

    $$\begin{aligned}
        L\left[\frac{\abs{\nabla h}^2}{2}\right]=&n \binom n k \left[\left(\frac{\Delta u}{n}-\frac{S_k(D^2u)}{\binom n k}\right)+\frac{n-k}{n}\left(\frac{S_{k}(D^2u)}{\binom {n} {k}}-\frac{S_{k+1}(D^2u)}{\binom {n}{ k+1}}\right)\right]\\
        +& k\binom{n}{k}\left[\frac{S_{k-1}(D^2 u)}{\binom{n}{k-1}}- \frac{S_k(D^2 u)}{\binom{n}{k}}\right],
    \end{aligned}$$
    so also in this case, $\displaystyle{L\left[\frac{\abs{\nabla h}^2}{2}\right]=0}$ is possible if and only if $D^2u$ is the identity matrix.
\end{oss}

As a consequence of Theorem \ref{L[P]}, we provide a new proof of the Serrin-type result for Hessian operators,
extending to $k \ge 1$ the well-known approach pioneered by Weinberger \cite{w71} in the case $k=1$.
\begin{teorema} 
\label{Serrin}
   Let $\Om \subset \RR^n$ be a $C^2$, bounded, connected, open set. Let $u \in C^2(\overline{\Omega})$ be the solution to \eqref{torsion} and  let $R>0$.  If
    $$\abs{\nabla u}=R \quad \quad \text{ on } \partial\Omega,$$
    then $\displaystyle{u=\frac{\abs{x}^2-R^2}{2}}$ and $\Omega$ is the ball $B_R$ (up to a translation).
\end{teorema}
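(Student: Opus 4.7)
The plan is to mimic Weinberger's original scheme \cite{w71}: couple the pointwise bound $P\le R^2/2$ (from a maximum principle) with a matching lower bound on $\int_\Om P\,dx$, forcing $P\equiv R^2/2$ and hence $D^2u\equiv I$. The overdetermined conditions $u=0$ and $|\na u|=R$ on $\pa\Om$ immediately give $P = R^2/2$ on $\pa\Om$; Theorem \ref{L[P]} supplies $L[P]\ge 0$ in $\Om$ and Remark \ref{ellip} ensures $L$ is uniformly elliptic, so the weak maximum principle yields $P\le R^2/2$ throughout $\Om$, and in particular $\int_\Om P\,dx \le R^2|\Om|/2$.

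For the matching lower bound I would combine Newton's inequality with Poho\v{z}aev. From $S_k(D^2u)=\binom{n}{k}$, \eqref{newton} gives the pointwise bound $\De u\ge n$; since $-u\ge 0$ in $\Om$ by the maximum principle, an integration by parts yields $\int_\Om|\na u|^2\,dx=\int_\Om(-u)\De u\,dx\ge n\int_\Om(-u)\,dx$. The Poho\v{z}aev identity \eqref{poho}, together with $|\na u|=R$ on $\pa\Om$ and Minkowski \eqref{minkowski}, simplifies to $\int_\Om(-u)\,dx = R^{k+1}W_{k-1}(\Om)/(n+2)$, so $\int_\Om P\,dx = \frac{1}{2}\int_\Om|\na u|^2\,dx+\int_\Om(-u)\,dx\ge \frac{n+2}{2}\int_\Om(-u)\,dx = R^{k+1}W_{k-1}/2$. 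To bridge the bounds I would use the Aleksandrov-Fenchel inequality: integrating the PDE in divergence form \eqref{skdiv} and using \eqref{hksk} on $\pa\Om$ gives $|\Om| = R^kW_k(\Om)$, so the required inequality $R^{k+1}W_{k-1}/2 \ge R^2|\Om|/2$ is equivalent to $W_{k-1}^k\ge|\Om|\,W_k^{k-1}$; the latter follows by combining \eqref{aleksandrov-fenchel_j0} at $j=k-1$ and $j=k$, raised to the $k$-th and $(k-1)$-th power respectively, and then dividing.

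Chaining these three bounds gives $\int_\Om P\,dx \le R^2|\Om|/2 \le R^{k+1}W_{k-1}/2 \le \int_\Om P\,dx$, so equality holds throughout; in particular $\int_\Om(R^2/2-P)\,dx = 0$ together with $R^2/2-P\ge 0$ pointwise forces $P\equiv R^2/2$ in $\Om$. Hence $L[P]\equiv 0$, and the equality case in Theorem \ref{L[P]} yields $D^2u\equiv I$ in $\Om$; a double integration combined with the boundary conditions then identifies $u(x)=(|x-z|^2-R^2)/2$ and $\Om=B_R(z)$ up to translation. The main obstacle to extending Weinberger's argument from $k=1$ to $k\ge 2$ is the absence of the exact energy identity $\int_\Om|\na u|^2 = n\int_\Om(-u)$ available in the classical case: here one only has the pointwise Newton inequality $\De u\ge n$, and the Aleksandrov-Fenchel comparison $W_{k-1}\ge RW_k$ is precisely what closes the resulting gap (and reduces to the trivial identity $W_0=RW_1$ when $k=1$).
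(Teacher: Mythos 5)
Your overall architecture mirrors the paper's proof — $P$-function maximum principle giving $\int_\Omega P\le R^2|\Omega|/2$, Newton's inequality $\Delta u\ge n$ to convert this into a lower bound via Poho\v{z}aev and Minkowski, and finally a geometric bridge — and steps 1--5 (including $|\Omega|=R^kW_k$) check out. The gap is entirely in the bridging step.

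The inequality you need, $W_{k-1}\ge RW_k$ (equivalently, using $|\Omega|=R^kW_k$, that $W_{k-1}^k\ge|\Omega|\,W_k^{k-1}$), does \emph{not} follow from \eqref{aleksandrov-fenchel_j0} as you suggest. Both instances of \eqref{aleksandrov-fenchel_j0} (at $j=k-1$ and at $j=k$) are \emph{lower} bounds on $W_{k-1}$ and $W_k$ in terms of $|\Omega|$; ``raising to powers and dividing'' two inequalities in the same direction is not a valid operation, and — more structurally — a lower bound on $W_k$ goes the wrong way for bounding $W_0 W_k^{k-1}$ from above by $W_{k-1}^k$. What you actually need is the log-concavity $W_j^2\ge W_{j-1}W_{j+1}$, which is the genuine Aleksandrov--Fenchel inequality and is \emph{not} implied by the chain \eqref{aleksandrov-fenchel}--\eqref{aleksandrov-fenchel_j0} (the chain says $(W_j/\omega_n)^{1/(n-j)}$ is monotone, a different statement). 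Worse, that log-concavity is a theorem about \emph{convex} bodies, and here the existence of a $C^2$ solution only forces $\Omega$ to be strictly $(k-1)$-convex, not convex; so even invoking the real Aleksandrov--Fenchel would not be justified for $k<n$.

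The paper avoids this entirely and gets $|\Omega|\le R^{k-1}W_{k-1}$ directly from the PDE: since $S_k(D^2u)=\binom{n}{k}$, Newton's inequality gives $S_{k-1}(D^2u)/\binom{n}{k-1}\ge 1$ pointwise, so $|\Omega|\le \int_\Omega S_{k-1}(D^2u)/\binom{n}{k-1}\,dx$; writing $S_{k-1}$ in divergence form and using \eqref{hksk} (with $k$ replaced by $k-1$), the divergence theorem and $|\nabla u|=R$ convert the right-hand side exactly into $R^{k-1}W_{k-1}(\Omega)$. This is the missing ingredient that closes your chain of integral bounds without any convexity hypothesis; once substituted for your A-F step, your proof coincides with the paper's (the paper phrases it as a contradiction with strict inequality, you phrase it as an equality forced by a sandwich, but these are the same argument).
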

\begin{proof}
First of all,  we observe that the existence of a solution $u\in C^2(\overline \Omega)$ ensures that $\Omega$ is strictly $(k-1)$-convex and that $u$ is strictly $k$-convex (see  \cite[Theorem 3]{caffarelli}).     
 
To prove the rigidity result,  we need to show  that $P$ is constant in $\overline \Omega$.
Since $L[P]\geq 0$ (see Theorem~\ref{L[P]}), the  maximum principle for linear elliptic operators implies that  $P$ attains its maximum over $\overline{\Omega}$
 on $\partial \Omega$. Thus, either

\smallskip

 $(i) \quad 
 \dfrac{|\nabla u|^2}{2}-u <\dfrac{R^2}{2} \text{ in } \overline\Omega,$ 
 \\
 or 

$
(ii) \quad  
\dfrac{|\nabla u|^2}{2}-u \equiv\dfrac{R^2}{2}.
$

\smallskip

Assume, by contradiction, that $(i)$ holds. Integrating over $\Omega$, we obtain 
\begin{equation*}%\label{abs}
    \int_{\Omega}\dfrac{|\nabla u|^2}{2}\, dx-\int_{\Omega} u\, dx<\dfrac{R^2}{2}|\Omega|. 
\end{equation*}
Using Poho\v zaev's identity \eqref{poho}, Minkowski' identity \eqref{minkowski} and the overdeteminated condition, we have 
\begin{equation*}
    \int_{\Omega}(-u)\,dx= \dfrac{R^{k+1} W_{k-1}(\Omega)}{(n+2)}.\end{equation*}
Moreover, Newton's inequality implies
\begin{gather*}
\begin{split}
 \frac{R^{k+1} W_{k-1}(\Omega)}{2} =& \left(\frac{n+2}{2}\right)\int_{\Omega}( -u)\,dx\leq\dfrac{1}{2}\int_{\Omega}-u\Delta u\,dx -\int_{\Omega} u \,dx \\& = \int_{\Omega}\dfrac{|\nabla u|^2}{2}\,dx-\int_{\Omega} u\,dx<\dfrac{R^2}{2}|\Omega|,
 \end{split}
\end{gather*}
that is
    \begin{equation}\label{R_k-1}
        R^{k-1}<\dfrac{|\Omega|}{W_{k-1}(\Omega)}.
    \end{equation}
On the other hand, we have
\begin{equation*}
\begin{aligned}
  \abs{\Omega}\le\int_\Omega \frac{S_{k-1}(D^2u)}{\binom{n}{k-1}} \; dx=& \frac{1}{(k-1)\binom{n}{k-1}}\int_{\partial\Omega} S_{k-1}^{i,j} u_i \frac{u_j}{\abs{\nabla u}} \;d\mathcal{H}^{n-1}=\\& =\frac{1}{n}R^{k-1}\int_{\partial\Omega} H_{k-2} \;d\mathcal{H}^{n-1} =R^{k-1}W_{k-1}(\Omega),  
\end{aligned}
\end{equation*}
that is in contradiction with \eqref{R_k-1}, implying that $P$ must be constant and $L[P]=0$. This concludes the proof.
\end{proof}

\subsection{Quantitative stability for Serrin-type overdetermined problems with Hessian operators}\label{stability_issue}

The present section is devoted to the proof of Theorem \ref{thm:Stability result Serrin}, and provides the generalization to the case $k\ge 1$ of the approach used in \cite{poggesi} for $k=1$. 
In order to do that, we start by proving a key fundamental identity (Section \ref{key_identity}). 
\subsubsection{A Fundamental Identity \texorpdfstring{for \eqref{eq:Hessian Dirichlet problem}}{}}
\label{key_identity}
\begin{teorema}
\label{teo_main}
Let $\Om \subset \RR^n$ be a $C^2$, bounded, connected, open set. Let $u\in C^2(\overline{\Omega})$ be the solution to  problem \eqref{torsion} with $1\le k <n$. Then, we have
\begin{equation}\label{principal_equality}
\begin{aligned}
\binom n k^{-1}&\frac 2k\displaystyle\int_{\Omega}(-u)L[P]\,dx+ n\int_{\Omega}(-u)\left[\frac{\Delta u}{n}-\frac{S_{k}(D^2u)}{\binom{n}{k}}\right]\, dx = \\   \dfrac{1}{n }& \left[\int_{\partial \Omega} H_{k-1}|\nabla u|^{k+1}\left(|\nabla u|-\langle x, \nu\rangle\right)\right] \;d\mathcal{H}^{n-1}.
\end{aligned}
\end{equation}
If $k=n$, we obtain

\begin{equation}\label{principal_equality_n}
   \int_{\Omega}{(-u)L[P]}\, dx=\dfrac{1}{(n+2)}\int_{\partial \Omega}H_{n-1}|\nabla u|^{n+1}\left(\abs{\nabla u}-\langle x, \nu \rangle\right) \;d\mathcal{H}^{n-1}.\end{equation}
\end{teorema}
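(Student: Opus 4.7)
The plan is to derive \eqref{principal_equality} via a short chain of integrations by parts, relying on three well-known facts about the cofactor-type matrix $S_k^{ij} = S_k^{ij}(D^2u)$: its divergence-freeness $(S_k^{ij})_i = 0$ (which, together with the symmetry of $S_k^{ij}$, also gives $(S_k^{ij})_j = 0$), Euler's identity $S_k^{ij} u_{ij} = k S_k(D^2u) = k \binom{n}{k}$, and the boundary identity \eqref{hksk}. As a preliminary observation, since $S_k(D^2u)/\binom{n}{k} = 1$, the second volume integral on the LHS of \eqref{principal_equality} simplifies, after one integration by parts of $\int_\Om (-u)\De u\,dx$ (using $u=0$ on $\pa\Om$), to $n\int_\Om (-u)[\De u/n - 1]\,dx = \int_\Om |\na u|^2\,dx - n\int_\Om (-u)\,dx$.

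For the first term on the LHS, I would integrate $\int_\Om (-u) L[P]\,dx$ by parts and exploit $u=0$ on $\pa\Om$ together with $(S_k^{ij})_i = 0$ to get $\int_\Om u_i S_k^{ij} P_j\,dx$. Expanding $P_j = u_l u_{lj} - u_j$ produces two integrals. The piece $\int_\Om u_i S_k^{ij} u_j\,dx$ integrates via one more application of integration by parts and Euler's identity to $k\binom{n}{k} \int_\Om (-u)\,dx$. The key move for the remaining piece $\int_\Om u_i S_k^{ij} u_l u_{lj}\,dx$ is to recognize $\sum_l u_l u_{lj} = \pa_j(|\na u|^2/2)$, so that another integration by parts (again using $(S_k^{ij})_j = 0$) yields a boundary term plus a $-k\binom{n}{k}\int_\Om |\na u|^2/2\,dx$ contribution from Euler. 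On $\pa\Om$ one has $\na u = |\na u|\,\nu$ (since $u=0$ there), and \eqref{hksk} gives $S_k^{ij} \nu_i \nu_j = \binom{n-1}{k-1} H_{k-1} |\na u|^{k-1}$, so the boundary term becomes $\frac{1}{2}\binom{n-1}{k-1} \int_{\pa\Om} H_{k-1} |\na u|^{k+2}\,d\cH^{n-1}$. The identity $\binom{n-1}{k-1}/\binom{n}{k} = k/n$ then collapses the combinatorial factors into the clean prefactor $1/n$.

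Combining everything and multiplying by $\binom{n}{k}^{-1}\,\frac{2}{k}$, together with the contribution from the second term on the LHS computed above, the LHS of \eqref{principal_equality} reduces to $\frac{1}{n} \int_{\pa\Om} H_{k-1} |\na u|^{k+2}\,d\cH^{n-1} - (n+2)\int_\Om (-u)\,dx$, and a final application of Poho\v{z}aev's identity \eqref{poho} to rewrite $(n+2)\int_\Om (-u)\,dx = \frac{1}{n}\int_{\pa\Om} H_{k-1} |\na u|^{k+1} \langle x,\nu\rangle\,d\cH^{n-1}$ assembles the announced right-hand side. For the case $k = n$, the same strategy works but simplifies: by \eqref{inv1} one has $S_n^{ij} u_{lj} = \delta_{il}$, so $\int_\Om u_i S_n^{ij} u_l u_{lj}\,dx = \int_\Om |\na u|^2\,dx$ directly, giving $\int_\Om (-u) L[P]\,dx = \int_\Om |\na u|^2\,dx - n\int_\Om (-u)\,dx$; to obtain the normalization $1/(n+2)$ appearing in \eqref{principal_equality_n}, I would separately derive the Rellich-type identity $(n+2)\int_\Om |\na u|^2\,dx = \int_{\pa\Om} H_{n-1} |\na u|^{n+2}\,d\cH^{n-1}$ by applying the same $\pa_j(|\na u|^2/2)$ trick to an integration-by-parts of $\int_\Om u_i S_n^{ij} u_l u_{lj}\,dx$, and conclude with Poho\v{z}aev. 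The main difficulty is not conceptual but bookkeeping: tracking combinatorial factors accurately, recognizing the right moment to introduce $\pa_j(|\na u|^2/2)$ into the computation, and consistently using \eqref{hksk} (and in the $k=n$ case \eqref{inv1}) to turn boundary values of the cofactor matrix into factors of $H_{k-1}$.
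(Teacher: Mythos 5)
Your argument is correct and yields \eqref{principal_equality} and \eqref{principal_equality_n}, but it takes a genuinely different route from the one in the paper. The paper never integrates $\int_{\Om}(-u)L[P]$ by parts directly: instead it starts from the Dirichlet energy $\int_\Om |\na u|^2\,dx$, uses the divergence form \eqref{skdiv} and the pointwise identity \eqref{hksk2} to produce the boundary curvature term, then uses \eqref{hksk} integrated over $\Om$ (i.e.\ on level sets of $u$) to bring in $S_{k+1}(D^2u)$, applies Poho\v{z}aev, and only at the very end identifies the resulting combination as $\int_\Om(-u)L[P]$ by means of the algebraic decomposition \eqref{L_new} established in the proof of Theorem \ref{L[P]}. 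Your version bypasses \eqref{L_new}, \eqref{hksk2}, and any appearance of $S_{k+1}$ entirely: two integrations by parts on $\int_\Om(-u)L[P]\,dx$ driven by the divergence-freeness of $S_k^{ij}$, Euler's identity, the observation $u_l u_{lj}=\pa_j(|\na u|^2/2)$, and \eqref{hksk} evaluated only on $\pa\Om$, followed by Poho\v{z}aev. This is arguably more direct and gives a cleaner account of where the boundary term comes from (it appears exactly once, as the single boundary contribution of the second integration by parts). I checked the bookkeeping: one finds $\int_\Om(-u)L[P]\,dx = \tfrac12\binom{n-1}{k-1}\int_{\pa\Om}H_{k-1}|\na u|^{k+2}\,d\cH^{n-1} - \tfrac{k}{2}\binom{n}{k}\int_\Om|\na u|^2\,dx - k\binom{n}{k}\int_\Om(-u)\,dx$, and after multiplying by $\binom{n}{k}^{-1}\tfrac{2}{k}$, adding the second LHS term, and applying Poho\v{z}aev, \eqref{principal_equality} follows. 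A small remark: your computation in fact establishes \eqref{principal_equality} uniformly for all $1\le k\le n$; the $k=n$ case needs no separate treatment, since for $k=n$ one has $L[P]=\De u - n$, so $\tfrac1n\int_\Om(-u)L[P]\,dx = \int_\Om(-u)\bigl[\tfrac{\De u}{n}-1\bigr]\,dx$ and \eqref{principal_equality} collapses to \eqref{principal_equality_n} after a one-line rearrangement. That said, your independent derivation via the Rellich-type identity $(n+2)\int_\Om|\na u|^2 = \int_{\pa\Om}H_{n-1}|\na u|^{n+2}$ is also correct and is a nice sanity check.
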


\begin{proof}
Let us start by computing the Dirichlet energy of $u$:
\begin{equation}\label{18}
\begin{aligned}
\int_{\Omega}\abs{\nabla u}^{2}\;dx & =\int_{\Omega}\abs{\nabla u}^{2} \frac{S_{k}\left(D^{2} u\right)}{\binom{n}{k}}\;dx  =\frac{1}{k\binom{n}{k}} \int_{\Omega}\abs{\nabla u}^{2}\left(S_{k}^{i j} u_{j}\right)_{i}  \;d\mathcal{H}^{n-1}\\
& =\frac{1}{k\binom{n}{k}}\left[-2 \int_{\Omega} S_{k}^{i j} u_{i} u_{l} u_{l j} \;dx+\int_{\partial \Omega}\abs{\nabla u} S_{k}^{i j} u_{i} u_{j} \;d\mathcal{H}^{n-1}\right]  \\
& =-\frac{2}{k\binom{n}{k}} \int_{\Omega}\left[S_{k}\left(D^{2} u\right)\abs{\nabla u}^{2}\; -\binom{n-1}{k}H_{k}\abs{\nabla u}^{k+2} \right]dx\\ &+\frac{1}{n} \int_{\partial \Omega} H_{k-1} \abs{\nabla u}^{k+2}\, d\mathcal{H}^{n-1},
\end{aligned}
\end{equation}
where the last equality follows from formula \eqref{hksk2}.
Moreover,  identity \eqref{hksk} gives
\begin{equation}\label{19}
   \binom{n-1}{k} \int_\Omega H_k \abs{\nabla u}^{k+2}\, dx = (k+1) \int_\Omega (-u) S_{k+1}(D^2u)\, dx.
\end{equation}
Combinig \eqref{18} and \eqref{19}, we obtain
\begin{equation*}
    \int_\Omega (-u) \frac{S_{k+1}(D^2u)}{\binom{n}{k+1}}\, dx= \frac{(k+2)}{2(n-k)}\int_\Omega \abs{\nabla u}^2\, dx - \frac{k}{2n(n-k)}\int_{\partial \Omega} H_{k-1} \abs{\nabla u}^{k+2}\, d\mathcal{H}^{n-1}.
\end{equation*}
By  the  Poho\v zaev identity for Hessian equations \eqref{poho}, we have
\begin{equation*}
    \int_{\Omega}(-u)\; dx=\frac{1}{n(n+2)} \int_{\partial \Omega}  H_{k-1}\langle x, \nu\rangle |\nabla u|^{k+1} \;d\mathcal{H}^{n-1}.
\end{equation*}
Then, we get
\begin{equation*}
    \begin{aligned}
     &\int_{\Omega}(-u)\left[\frac{S_k(D^2u)}{\binom{n}{k}}-\frac{S_{k+1}(D^2u)}{\binom{n}{k+1}}\right]\, dx= \frac{1}{n(n+2)}\int_{\partial\Omega}H_{k-1}\abs{\nabla u}^{k+1} \langle x,\nu\rangle \;d\mathcal{H}^{n-1}+\\
     -&\frac{(k+2)}{2(n-k)}\int_\Omega \abs{\nabla u}^2\, dx + \frac{k}{2n(n-k)}\int_{\partial \Omega} H_{k-1} \abs{\nabla u}^{k+2}\, d\mathcal{H}^{n-1}=\\
     &\frac{n(k+2)}{2(n-k)}\int_\Omega(-u)\left[\frac{S_k(D^2u)}{\binom{n}{k}}-\frac{\Delta u}{n}\right] dx+ \frac{k}{2n(n-k)}\int_{\partial\Omega} H_{k-1}\abs{\nabla u}^{k+1}\left(\abs{\nabla u}-\langle x , \nu \rangle\right) \;d\mathcal{H}^{n-1},
    \end{aligned}
\end{equation*}

Finally, by \eqref{L_new}, we obtain the claim. 

In order to obtain \eqref{principal_equality_n}, we procced as before, using \eqref{S_n}.
\end{proof}
As a consequence of the fundamental identity \eqref{principal_equality} and \eqref{principal_equality_n}, we obtain the following inequality, which will be the starting point in proving our stability result.
\begin{prop}\label{44} Let $\Om \subset \RR^n$ be a $C^2$, bounded, connected, open set. Let $u\in C^2(\overline{\Omega})$ be the solution to problem \eqref{torsion} with $1\leq  k \leq n$ and $M=\max_{\overline \Omega}|\nabla u|=\max_{\partial \Omega}|\nabla u|$. Then, 
\begin{equation}\label{dish}
\begin{aligned}
\binom n k^{-1}\frac 2k\displaystyle\int_{\Omega}(-u)L\left[\frac{\abs{\nabla h}^2}{2}\right]\,dx\le &  \dfrac{1}{n } \left[\int_{\partial \Omega} H_{k-1}|\nabla u|^{k+1}\left(|\nabla u|-\langle x, \nu\rangle\right)\right] \,d \mathcal{H}^{n-1}\,+\\
&\frac{M^2}{n}\,\int_{\partial \Omega} \left(H_{k-2} |\nabla u|^{k-1}-<x,\nu>\right)\,d\mathcal{H}^{n-1}
\end{aligned}
\end{equation}
\end{prop}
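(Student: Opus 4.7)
The plan is to split $L[\abs{\nabla h}^2/2]=L[P]+L[h]$ via Theorem~\ref{h}(ii), and then bound the two resulting contributions to $\binom{n}{k}^{-1}\frac{2}{k}\int_\Omega(-u)L[\abs{\nabla h}^2/2]\,dx$ separately against the two boundary terms on the right-hand side of~\eqref{dish}.

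For the $L[P]$ piece I would apply the fundamental identity~\eqref{principal_equality} (and its analogue for $k=n$, obtained from~\eqref{principal_equality_n} by a parallel computation). Since $u$ solves~\eqref{torsion}, one has $S_k(D^2u)/\binom{n}{k}=1$, and Newton's inequality~\eqref{newton} yields $\Delta u/n \ge \bigl(S_k(D^2u)/\binom{n}{k}\bigr)^{1/k}=1$, so the ``bulk'' term $n\int_\Omega(-u)[\Delta u/n-S_k(D^2u)/\binom{n}{k}]\,dx$ is non-negative and can be discarded. This gives
\[
\binom{n}{k}^{-1}\frac{2}{k}\int_\Omega(-u)L[P]\,dx \le \frac{1}{n}\int_{\partial\Omega}H_{k-1}\abs{\nabla u}^{k+1}\bigl(\abs{\nabla u}-\langle x,\nu\rangle\bigr)\,d\mathcal{H}^{n-1},
\]
which reproduces the first boundary term in~\eqref{dish}.

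For the $L[h]$ piece, I would first rewrite the formula in Theorem~\ref{h}(i), using the arithmetic identity $(n-k+1)\binom{n}{k-1}=k\binom{n}{k}$ together with $S_k(D^2u)=\binom{n}{k}$, as
\[
L[h] = (n-k+1)\left[S_{k-1}(D^2u)-\binom{n}{k-1}\right],
\]
which is pointwise non-negative by Newton's inequality applied to $S_k(D^2u)=\binom{n}{k}$. Now the $P$-function is the key ingredient: Theorem~\ref{L[P]} combined with the ellipticity of $L$ established in Remark~\ref{ellip} forces $P$ to attain its maximum on $\partial\Omega$, where $u=0$, so $P\le M^2/2$ throughout $\overline\Omega$; in particular $-u\le M^2/2$ in $\Omega$. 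Consequently
\[
\int_\Omega(-u)L[h]\,dx \le \frac{M^2}{2}(n-k+1)\int_\Omega\!\left[S_{k-1}(D^2u)-\binom{n}{k-1}\right]\,dx.
\]
The volume integral on the right is then reduced to a boundary integral via the divergence form $S_{k-1}(D^2u)=\frac{1}{k-1}(S_{k-1}^{ij}u_j)_i$, the vanishing of $u$ on $\partial\Omega$, the relation $\nabla u=\abs{\nabla u}\nu$ on $\partial\Omega$, and identity~\eqref{hksk} applied with $k$ replaced by $k-1$, which give
\[
\int_\Omega S_{k-1}(D^2u)\,dx = \frac{\binom{n-1}{k-2}}{k-1}\int_{\partial\Omega}H_{k-2}\abs{\nabla u}^{k-1}\,d\mathcal{H}^{n-1};
\]
together with the divergence theorem identity $\abs{\Omega}=\frac{1}{n}\int_{\partial\Omega}\langle x,\nu\rangle\,d\mathcal{H}^{n-1}$ and the combinatorial identity $\frac{(n-k+1)\binom{n-1}{k-2}}{k(k-1)\binom{n}{k}}=\frac{1}{n}$ (both easily checked from the factorial expressions), multiplying by $\binom{n}{k}^{-1}\frac{2}{k}$ yields precisely the prefactor $\frac{M^2}{n}$ and
\[
\binom{n}{k}^{-1}\frac{2}{k}\int_\Omega(-u)L[h]\,dx \le \frac{M^2}{n}\int_{\partial\Omega}\bigl(H_{k-2}\abs{\nabla u}^{k-1}-\langle x,\nu\rangle\bigr)\,d\mathcal{H}^{n-1}.
\]

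Summing the two estimates produces~\eqref{dish}. The case $k=1$ is degenerate: $L[h]\equiv 0$ so the $L[h]$ bound is trivially zero, and the second boundary term in~\eqref{dish} is read as zero (with the natural convention $H_{-1}\abs{\nabla u}^{0}\equiv \langle x,\nu\rangle$, so that the integrand vanishes identically), and the inequality collapses to the $L[P]$ estimate alone. The main obstacle is not a deep one: it is really combinatorial bookkeeping, i.e., correctly carrying through the three factorial identities $(n-k+1)\binom{n}{k-1}=k\binom{n}{k}$, $(k-1)\binom{n}{k-1}=n\binom{n-1}{k-2}$, and $\frac{(n-k+1)\binom{n-1}{k-2}}{k(k-1)\binom{n}{k}}=\frac{1}{n}$, so as to land exactly on the coefficients $\frac{1}{n}$ and $\frac{M^2}{n}$ demanded by the statement.
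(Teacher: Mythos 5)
Your proposal is correct and follows essentially the same route as the paper: you split $L[\abs{\nabla h}^2/2]=L[P]+L[h]$ via Theorem~\ref{h}(ii), use the fundamental identity~\eqref{principal_equality} together with Newton's inequality to discard the non-negative Dirichlet--energy bulk term and obtain the first boundary integral from the $L[P]$ part, and use the maximum principle for $P$ (giving $-u\le M^2/2$), $L[h]\ge 0$, and the divergence theorem to turn the $L[h]$ part into the second boundary integral. The paper writes this as ``adding $\binom{n}{k}^{-1}\tfrac{2}{k}\int_\Omega(-u)L[h]\,dx$ to both sides of~\eqref{principal_equality}'', which is the same decomposition stated more tersely; your more explicit reduction of $\int_\Omega S_{k-1}(D^2u)\,dx$ to $\frac{\binom{n-1}{k-2}}{k-1}\int_{\partial\Omega}H_{k-2}\abs{\nabla u}^{k-1}\,d\mathcal{H}^{n-1}$ and the three combinatorial identities you track down are exactly what the paper's ``by the divergence theorem'' step compresses.
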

\begin{proof}
We add to both sides of \eqref{principal_equality} the following quantity:
\[
\binom{n}{k}^{-1}\frac{2}{k} \int_{\Omega}(-u)L[h]\, dx=\displaystyle 2\int_{\Omega}(-u) \left[ \frac{S_{k-1}(D^2 u)}{\binom{n}{k-1}}-1\right]\,dx. 
\]
By the maximum principle for $P$, we get
\begin{equation}
\label{max1}
\max_{\overline \Omega}(-u) <\frac{M^2}{2},
\end{equation}
hence, by \eqref{max1} and the divergence Theorem
\[
\begin{aligned}
\binom{n}{k}^{-1}\frac{2}{k} &\int_{\Omega}(-u)L[h]\, dx\le\displaystyle M^2\int_{\Omega} \left[ \frac{S_{k-1}(D^2 u)}{\binom{n}{k-1}}-1\right]\,dx =\\
\frac{M^2}{n}&\int_{\partial\Omega}\left(H_{k-2} |\nabla u|^{k-1}-<x,\nu>\right)\,d\mathcal{H}^{n-1},
\end{aligned}
\]
that is the desired inequality.
\end{proof}

We conclude with the following remark, where we observe that as a consequence of Proposition~\ref{44} we can derive the rigidity result stated in Theorem \ref{Serrin}.
\begin{oss}[An alternative proof of Theorem \ref{Serrin}]\label{oss:rigidty again}
%Let us observe that as a consequence of  Proposition \ref{44}
%we can derive the rigidity %result Theorem \ref{Serrin}.

Starting from formula \eqref{dish}, we  add and subtract to the left hand side  the term
$$\frac{R^{k+1}}{n}\int_{\partial\Omega}H_{k-1}\left(|\nabla u|-\langle x, \nu\rangle\right)\, d\mathcal{H}^{n-1},$$
obtaining 
\begin{equation}\label{full}
\begin{aligned}
&\binom n k^{-1}\frac 2k\displaystyle\int_{\Omega}(-u)L\left[\frac{\abs{\nabla h}^2}{2}\right]\,dx %2 \displaystyle\int_{\Omega}(-u) \left[ \frac{S_{k-1}(D^2 u)}{\binom{n}{k-1}}-1\right]\,dx 
\le    \dfrac{1}{n } \int_{\partial \Omega} H_{k-1}\left(|\nabla u|^{k+1}-R^{k+1}\right)\left(|\nabla u|-\langle x, \nu\rangle\right)\, d\mathcal{H}^{n-1}+\\ &+ \frac{R^{k+1}}{n}\int_{\partial\Omega}H_{k-1}\left(|\nabla u|-\langle x, \nu\rangle\right)+ \frac{M^2}{n}\,\int_{\partial \Omega} \left(H_{k-2} |\nabla u|^{k-1}-<x,\nu>\right)\,d\mathcal H^{n-1}.
\end{aligned}
\end{equation}
    Now, the overdeterminated condition  $\abs{\nabla u}=R$ on $\partial\Omega$ implies
\begin{equation}
\label{come}
\begin{aligned}
\binom n k^{-1}\frac 2k\displaystyle\int_{\Omega}(-u)L\left[\frac{\abs{\nabla h}^2}{2}\right]\,dx 
\le &  {R^{k+1}}(RW_k(\Omega)-W_{k-1}(\Omega))+ {M^2} R^{k-1}W_{k-1}(\Omega)-M^2 \abs{\Omega}\\
=& R^{k+2}W_k(\Omega)- R^2\abs{\Omega},
\end{aligned}
\end{equation}
where we use the fact that
$$M=\max_{\Omega}\abs{\nabla u}=\max_{\partial\Omega}\abs{\nabla u}=R.$$
On the other hand, we have by Newton's inequality,
\begin{equation}
\label{RW}
    \abs{\Omega}=\int_\Omega \frac{S_k(D^2u)}{\binom{n}{k}} \; dx= \frac{1}{k\binom{n}{k}}\int_{\partial\Omega} S_k^{i,j} u_i \frac{u_j}{\abs{\nabla u}} \;d\mathcal{H}^{n-1}= \frac{1}{n}R^k\int_{\partial\Omega} H_{k-1}  \;d\mathcal{H}^{n-1}=R^k W_{k}(\Omega).
\end{equation}
One can reobtain the rigidity result combining  \eqref{come}, \eqref{RW} and Remark \ref{rem_h0}, proving  that, whenever the overdeterminated condition holds, we have that $\displaystyle{u=\frac{\abs{x}^2-R^2}{2}}$ and $\Omega$ is the ball $B_R$ up to a translation.
\end{oss}

\subsubsection{Proof of Theorem \ref{thm:Stability result Serrin}}
\label{serrin_stabilty}

In the following proposition, we bound the right-hand side of \eqref{dish} from above in terms of the deviation $\de$ defined in \eqref{eq:uniform deviation Serrin}.

\begin{prop}\label{step1}
    Let $\Om \subset \RR^n$ be a $C^2$, bounded, connected, open set, and let $u\in C^2(\overline{\Omega})$ be the solution to   problem  \eqref{torsion}
with $1\leq  k \leq n$. Let $\delta$ be defined as \eqref{eq:uniform deviation Serrin}. %is satisfied for some $\delta$. 
Then,  $\Omega$ is strictly $(k-1)$-convex, $u$ is strictly $k$-convex and

\begin{equation}
\label{final_step1}
\begin{aligned}
    \dfrac{1}{n } &\left[\int_{\partial \Omega} H_{k-1}|\nabla u|^{k+1}\left(|\nabla u|-\langle x, \nu\rangle\right)\right] \,d \mathcal{H}^{n-1}\,+\frac{M^2}{n}\,\int_{\partial \Omega} \left(H_{k-2} |\nabla u|^{k-1}-<x,\nu>\right)\,d\mathcal{H}^{n-1}\\ \le& W_k(\Omega) R^k \left[2R+d_\Omega(k+2)\right] \delta  .     
\end{aligned}
\end{equation}
\end{prop}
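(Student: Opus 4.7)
The proposition has two parts: the strict convexity assertions and the quantitative inequality \eqref{final_step1}. The former are essentially free consequences of admissibility theory for the $k$-Hessian equation, while the latter will be established by mirroring the overdetermined computation in Remark \ref{oss:rigidty again} and carefully tracking the error arising from the fact that $|\nabla u|$ is not identically equal to $R$ on $\partial\Omega$.

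The strict $(k-1)$-convexity of $\Omega$ and the strict $k$-convexity of $u$ follow at once from the existence of a classical $C^2$ solution to \eqref{torsion}, via the admissibility statement already invoked in the proof of Theorem \ref{Serrin} (see \cite[Theorem 3]{caffarelli}). In particular, $H_{k-1}>0$ on $\partial\Omega$ and, for $k\ge 2$, also $H_{k-2}\ge 0$. Moreover, by Theorem \ref{L[P]} together with Remark \ref{ellip}, the maximum $M=\max_{\overline{\Omega}}|\nabla u|$ is attained on $\partial\Omega$, so the definition of $\delta$ in \eqref{eq:uniform deviation Serrin} forces $M\le R+\delta$ and, by the elementary mean value inequality, yields the pointwise estimate $\bigl||\nabla u|^\alpha-R^\alpha\bigr|\le \alpha M^{\alpha-1}\delta$ on $\partial\Omega$ for every exponent $\alpha>0$.

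For the quantitative inequality, denote by $T_1$ and $T_2$ the first and second summands on the left-hand side of \eqref{final_step1}, and substitute $|\nabla u|^\alpha=R^\alpha+(|\nabla u|^\alpha-R^\alpha)$ inside each boundary integrand. Combining the resulting pieces via Minkowski's identity \eqref{minkowski}, the definition of quermassintegrals \eqref{wk} (in the forms $\int_{\partial\Omega}H_{k-1}\,d\mathcal{H}^{n-1}=nW_k(\Omega)$ and $\int_{\partial\Omega}H_{k-2}\,d\mathcal{H}^{n-1}=nW_{k-1}(\Omega)$), the divergence identity $\int_{\partial\Omega}\langle x,\nu\rangle\,d\mathcal{H}^{n-1}=n|\Omega|$, and the integration-by-parts formula $|\Omega|=\frac{1}{n}\int_{\partial\Omega}H_{k-1}|\nabla u|^k\,d\mathcal{H}^{n-1}$ recorded in \eqref{RW}, the ``frozen'' part of the expansion---where every $|\nabla u|$ is replaced by $R$, $M$ accordingly by $R$, and $|\Omega|$ by its frozen counterpart $R^kW_k(\Omega)$---reproduces line by line the computation of \eqref{come} and collapses to $R^{k+2}W_k(\Omega)-R^2\cdot R^kW_k(\Omega)=0$.

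What remains are finitely many residual integrals of the shape $\int_{\partial\Omega}H_j\,(|\nabla u|^\alpha-R^\alpha)\,g\,d\mathcal{H}^{n-1}$ with $j\in\{k-2,k-1\}$ and $g\in\{1,\langle x,\nu\rangle,R^\beta,|\nabla u|^\beta\}$, together with the ``freezing error'' $|\Omega|-R^kW_k(\Omega)=\frac{1}{n}\int_{\partial\Omega}H_{k-1}(|\nabla u|^k-R^k)\,d\mathcal{H}^{n-1}$. Each such residual is controlled by $\delta$ via $M\le R+\delta$, $|\langle x,\nu\rangle|\le d_\Omega$, the mean value inequality, and the nonnegativity of $H_{k-1}$ and $H_{k-2}$; summing the contributions produces the claimed bound $\delta\,W_k(\Omega)\,R^k\bigl[2R+(k+2)d_\Omega\bigr]$. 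The main obstacle I anticipate is precisely this bookkeeping: a brute-force triangle-inequality bound of each residual produces a substantially larger constant in front of $R^{k+1}W_k(\Omega)$, and recovering the sharp coefficient $2$ requires exploiting the pairwise cancellation between the ``$R^\alpha$''-contributions arising from $T_1$ and from $T_2$---the residual analogue of the algebraic identity $R^{k+2}W_k(\Omega)=R^2|\Omega|$ that drives the cancellation in Remark \ref{oss:rigidty again}.
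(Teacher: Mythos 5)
Your overall strategy -- expand each boundary integrand around $R$, identify the ``ideal'' configuration that vanishes by the cancellation $R^{k+2}W_k(\Omega)=R^2|\Omega|$ (in the frozen picture), and then bound the leftover residuals by $\delta$ -- is the same cancellation mechanism that underlies the paper's argument, so conceptually you are on the right track. However, the paper does not perform an explicit ``frozen plus residuals'' decomposition; it argues one-sidedly. It first records the two-sided estimate $(R-\delta)^k W_k(\Omega)\le|\Omega|\le(R+\delta)^kW_k(\Omega)$ (coming from $|\Omega|=\tfrac1n\int_{\pa\Om}H_{k-1}|\na u|^k$, not from \eqref{RW}, which is the frozen $|\na u|\equiv R$ identity you cite slightly out of context), then bounds $T_2$ from above by $(R+\delta)^{k+1}W_{k-1}(\Omega)-(R+\delta)^2|\Omega|$ using only $M\le R+\delta$ and $\int_{\pa\Om}\langle x,\nu\rangle=n|\Om|$, and bounds $T_1$ by $(R+\delta)^{k+2}W_k(\Om)$ plus the Minkowski-corrected residual $|\tfrac1n\int H_{k-1}\langle x,\nu\rangle(|\na u|^{k+1}-R^{k+1})|$, which it controls directly with $|\langle x,\nu\rangle|\le d_\Om$ and the mean value inequality. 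The pieces are then recombined using the lower bound on $|\Omega|$, and everything collapses to a single combination of $(R\pm\delta)^j$ terms. This avoids ever introducing a ``freezing error'' $|\Omega|-R^kW_k(\Omega)$ and keeps every step as an explicit upper bound rather than a signed residual.

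The genuine gap in your proposal is the last paragraph: you correctly diagnose that naive term-by-term estimation will not produce the claimed coefficient, but you do not resolve it -- you only gesture at a ``pairwise cancellation'' without exhibiting it. Concretely, two ingredients you have not supplied are (a) the two-sided bracketing of $|\Omega|$ by $(R\mp\delta)^kW_k(\Omega)$, which is what lets the term $-M^2\,n|\Omega|$ inside $T_2$ be bounded from above rather than merely estimated in absolute value, and (b) a mechanism to convert the $W_{k-1}(\Omega)$-residuals coming from the $\int_{\pa\Om}H_{k-2}\,d\cH^{n-1}$ piece of $T_2$ into $W_k(\Omega)$-terms; the natural tool here, which your outline does not invoke, is the Minkowski-identity estimate $W_{k-1}(\Omega)=\tfrac1n\int_{\pa\Om}H_{k-1}\langle x,\nu\rangle\,d\cH^{n-1}\le d_\Om\,W_k(\Omega)$, valid thanks to $H_{k-1}\ge0$. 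Without these two steps made explicit, the claim that ``summing the contributions produces the claimed bound'' is an assertion, not a proof, and your own closing sentence essentially concedes this. The strict-convexity preamble and the pointwise estimates $M\le R+\delta$, $\bigl||\na u|^\alpha-R^\alpha\bigr|\le\alpha M^{\alpha-1}\delta$ are fine and match the paper.
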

\begin{proof}
First of all  let us  observe that the existence of a solution $u\in C^2(\overline \Omega)$ ensures that $\Omega$ is strictly $(k-1)$-convex and that $u$ is strictly $k$-convex (see  \cite[Theorem 3]{caffarelli}).

We observe that \eqref{eq:uniform deviation Serrin} implies
\begin{equation}
\begin{aligned}
   \abs{\Omega}\ge(R-\delta)^k W_{k}(\Omega)=(R-\delta)^k\frac{1}{n}\int_{\partial\Omega} H_{k-1} \;d\mathcal{H}^{n-1},\\
  \abs{\Omega}\le (R+\delta)^k\frac{1}{n}\int_{\partial\Omega} H_{k-1} \;d\mathcal{H}^{n-1} =(R+\delta)^k W_{k}(\Omega).
  \end{aligned}
\end{equation}
  We now  aim to bound in terms of $\delta$ the right-hand side of \eqref{dish}, that is
    $$ \dfrac{1}{n } \left[\int_{\partial \Omega} H_{k-1}|\nabla u|^{k+1}\left(|\nabla u|-\langle x, \nu\rangle\right)\right] \,d \mathcal{H}^{n-1}\,+ \frac{M^2}{n}\,\int_{\partial \Omega} \left(H_{k-2} |\nabla u|^{k-1}-<x,\nu>\right)\,d\mathcal{H}^{n-1}.$$
      Since $M\le R+\delta$, the second term can be bounded as follows 
\begin{equation}
     \frac{M^2}{n}\,\int_{\partial \Omega} \left(H_{k-2} |\nabla u|^{k-1}-<x,\nu>\right)\,d\mathcal{H}^{n-1}\le (R+\delta)^{k+1}W_{k-1}(\Omega)- (R+\delta)^2\abs{\Omega}.
\end{equation}
As for the first term, let us observe that
     $$
            \frac{1}{n}\int_{\partial\Omega}H_{k-1}\abs{\nabla u}^{k+2}\, d\mathcal{H}^{n-1}\le (R+\delta)^{k+2} W_k(\Omega), 
    $$
    while

$$\begin{aligned}
            \frac{1}{n}\int_{\partial\Omega}H_{k-1}\abs{\nabla u}^{k+1}\langle x, \nu\rangle\, d\mathcal{H}^{n-1}= \frac{1}{n}\int_{\partial\Omega}H_{k-1}\langle x, \nu\rangle\left(\abs{\nabla u}^{k+1}-R^{k+1}\right)\, d\mathcal{H}^{n-1} +R^{k+1}W_{k-1}(\Omega).
            \end{aligned}$$
So, by \eqref{eq:uniform deviation Serrin}, we have
    $$\begin{aligned}
           \abs{ \frac{1}{n}\int_{\partial\Omega}H_{k-1}\abs{\nabla u}^{k+1}\langle x, \nu\rangle\, d\mathcal{H}^{n-1}- R^{k+1}W_{k-1}(\Omega)}\leq (k+1) d_\Omega W_k(\Omega) R^k \delta 
            \end{aligned}.$$
 Hence, the right-hand side of \eqref{dish} can be bounded from above in terms of $\delta$ as follows: 
\begin{equation*}
   \begin{aligned}
  & \dfrac{1}{n } \left[\int_{\partial \Omega} H_{k-1}|\nabla u|^{k+1}\left(|\nabla u|-\langle x, \nu\rangle\right)\right] \,d \mathcal{H}^{n-1}\,+ \frac{M^2}{n}\,\int_{\partial \Omega} \left(H_{k-2} |\nabla u|^{k-1}-<x,\nu>\right)\,d\mathcal{H}^{n-1}\leq \\
     &(R+\delta)^{k+2} W_k(\Omega)- R^{k+1}W_{k-1}(\Omega) +(k+1) d_\Omega W_k(\Omega) R^k \delta +\\&+(R+\delta)^2\left[(R+\delta)^{k-1}W_{k-1}(\Omega)-\abs{\Omega}\right]\le\\&
         W_k(\Omega) [(R+\delta)^{k+2}- (R+\delta)^2(R-\delta)^k]+(k+1) d_\Omega W_k(\Omega) R^k \delta+\\&+ W_{k-1}(\Omega)[(R+\delta)^{k+1}-R^{k+1}]
       \le  W_k(\Omega) R^k \left[2R+d_\Omega(k+2)\right] \delta.
       \end{aligned}
   \end{equation*}

\end{proof}

We are now in position to prove Theorem \ref{thm:Stability result Serrin}.

\begin{proof}[Proof of Theorem \ref{thm:Stability result Serrin}]

In what follows, we will denote the constants with the letter $C$ and specify between round brackets the parameters on which they depend. The value of $C$ may change from line to line and may be explicitly estimated in terms of the indicated parameters by following the steps of the proof.

{\bf Step 1 (Preliminary estimates).}
Notice that it is enough to prove the theorem for $\de \le \de_0$, where $\de_0=\de_0(n, r_i, d_\Om )$ is some explicit positive constant only depending on $n, r_i, d_\Om$. In fact, if $\de \ge \de_0$, then the statement trivially holds true, as
\begin{equation*}
	\rho_e - \rho_i \le d_\Om \le \frac{d_\Om}{\de_0^\tau} \de^\tau 
\end{equation*}
for any $\tau >0$.
Thus, in what follows we can assume that
\begin{equation*}
	\de \le 1
\end{equation*}
so that by \eqref{eq:uniform deviation Serrin}, the following gradient bound holds true
\begin{equation}\label{eq:Gradient bound for u with uniform deviation}
	M:= \nr \na u \nr_{L^\infty (\Om)} = \max_{\ol{\Om}} |\na u| = \max_{\pa{\Om}} |\na u| \le \SerrinR + \de
	\le \SerrinR +1  
	\le C(n,d_\Om),
\end{equation}
for some explicit constant $C(n,d_\Om)$. 
The last inequality in \eqref{eq:Gradient bound for u with uniform deviation} easily follows using that
\begin{equation}\label{eq: explicit bound for Rmedia with de le 1}
	R \le C(n, d_\Om) , 
\end{equation}
for some explicit constant $C(n, d_\Om)$, which can be obtained as follows.
Using the divergence theorem and recalling \eqref{eq:Hessian Dirichlet problem}, we compute
\begin{equation}\label{eq:1 stimaRmedia}
	\int_{\pa\Om} \left(S_k^{ij} (D^2 u) u_i \frac{u_j}{| \na u|} \right) \, d \cH^{n-1} = \int_\Om S_k(D^2u) \,dx = \binom{n}{k} |\Om| .
\end{equation}
Recalling now Remark \ref{ellip},
we have that
\begin{equation}\label{eq:2 stimaRmedia}
	\int_{\pa\Om} | \na u | d\cH^{n-1} \le C(n, d_\Om)   \int_{\pa\Om}  \left[S_k^{ij}(D^2 u) u_j \frac{u_i}{|\na u|} \right] d\cH^{n-1},
\end{equation}
whereas from the definition of $R$ and the isoperimetric inequality we have that
\begin{equation}\label{eq:3 stimaRmedia}
n\omega_n^{\frac{1}{n}} |\Om|^{1-\frac{1}{n}} R \le P(\Om) R = \int_{\pa\Om} | \na u | d\cH^{n-1} .
\end{equation}
Combining \eqref{eq:1 stimaRmedia}, \eqref{eq:2 stimaRmedia}, \eqref{eq:3 stimaRmedia}, and using the isodiametric inequality
\begin{equation*}
	|\Om| \le \omega_n \left( \frac{d_\Om}{2}\right)^n,
\end{equation*}
we find an explicit constant $C(n,d_\Om)$ such that \eqref{eq: explicit bound for Rmedia with de le 1} holds true.

Thanks to the comparison principle for the Hessian operator, comparing the solution $u$ to \eqref{eq:Hessian Dirichlet problem} and
$$
w(y):= \frac{\abs{y-x}^2- \de_{\pa\Om}^2(x)}{2} ,
$$
that is the solution to \eqref{eq:Hessian Dirichlet problem} in $B_{\de_{\pa\Om}(x)} (x) \subset \Om$,
we can see that
\begin{equation}\label{distanza}
       \frac{1}{2}\de_{\pa\Om}^2(x) \le   -u(x), \quad \text{ for any } x\in \overline{\Omega} ,
    \end{equation}
and, since $\Om$ satisfies the $r_i$-uniform interior sphere condition, we can also claim that,
\begin{equation}\label{eq:distance and u relation: Lipschitz}
\frac{r_i}{2} \de_{\pa\Om}(x) \le 	-u(x) \quad \text{ for any } x \in \ol{\Om} .
\end{equation}
Let us prove \eqref{eq:distance and u relation: Lipschitz}: if $\delta_{\pa \Om}(x)\ge r_i$, the claim follows trivially, while if $\delta_{\pa \Om}(x)< r_i$, let $y$ be the closest point in $\pa \Omega$ to $x$ and call $B$ the ball of radius $r_i$ touching $\pa \Om$ at $y$ and containing $x$. Up to a translation, we can assume that the center of $B$ is the origin. If $w$ is the solution to \eqref{eq:Hessian Dirichlet problem} on $B$  $$w(s):= \frac{\abs{s}^2- {r_i}^2}{2}, $$
then, by comparison we have that $w\ge u$ in $B$ and so 
\begin{equation*}
    -u(x)\ge \frac{r_i^2 - \abs{x}^2 }{2}=\frac{ (r_i+\abs{x}) }{2}  (r_i-\abs{x} ) \ge \frac{ r_i }{2} (r_i-\abs{x}),
\end{equation*}
which implies \eqref{eq:distance and u relation: Lipschitz}, since $r_i-\abs{x}\ge\delta_{\partial \Omega}(x)$.

{\bf Step 2 (Linking $\int_{\Omega}(-u) |D^2 h|^2\, dx$ and the right-hand side of \eqref{dish}).}  By the ellipticity of the operator $L$ (see Remark \ref{ellip}), it holds
\begin{equation}\label{eq:relation FrobeniusnormHessian and L}
|D^2 h|^2 \leq  C(n, d_\Om) \,  L\left[\frac{\abs{\nabla h}^2}{2}\right].
\end{equation}

Combining \eqref{dish}, \eqref{final_step1},
\eqref{eq: explicit bound for Rmedia with de le 1}, and \eqref{eq:relation FrobeniusnormHessian and L} we thus obtain that
\begin{equation}\label{eq:weighted Hessian and RHS}
  \int_{\Omega}(-u) |D^2 h|^2\, dx \le C(n, r_i ,d_\Om ) \de .
\end{equation}

\bigskip

{\bf Step 3 (Uniform stability).}
Let $z$ be a global maximum point of $-u$ in $\ol{\Om}$; notice that $z\in\Om$. Set 
\begin{equation*}
    q = \frac{|x-z|^2}{2} \quad \text{ and }  h= q-u ,
\end{equation*}
\begin{equation*}
\rho_e=\max_{x\in \pa\Om}|x-z| \quad \text{ and } \quad \rho_i=\min_{x\in \pa\Om}|x-z| .
\end{equation*}
Thus, since $u=0$ on $\pa\Om$, the definitions of $h$, $q$, $\rho_i$, and $\rho_e$ give
\begin{equation*}
	\max_{\ol{\Om}} h - \min_{\ol{\Om}} h \ge 	\max_{\pa\Om} h - \min_{\pa\Om} h = 
	\max_{\pa\Om} q - \min_{\pa\Om} q = \frac{1}{2} \left( \rho_e^2 - \rho_i^2 \right) \ge \frac{r_i}{2} \left( \rho_e-\rho_i \right),
\end{equation*}
that is,
\begin{equation*}
	\rho_e -\rho_i \le  \frac{2}{r_i} \left( \max_{\ol{\Om}} h - \min_{\ol{\Om}} h \right).
\end{equation*}

Combining this with
Theorem \ref{thm:Interpolation from MPMine + MPCVPDE}  (with $f:=h$, and $q:=\infty$), and with (iii) of Remark~\ref{rem:(iii)if r_i then b_0, theta, a, and C_p can be estimated (iv) for Poincare with L and r_i.} in the Appendix, we find that
\begin{equation}\label{eq:interpolating inequality for h}
\rho_e -\rho_i \le C(n, p, r_i, d_\Om)
\begin{cases}
\nr \na h \nr_{L^p}(\Om)  & \quad \text{ if } p>n ,
\\
\nr \na h \nr_{L^n(\Om)} \log \left(e \,  \frac{ |\Om|^{\frac{1}{n}} M}{ \nr \na h \nr_{L^n(\Om)}}  \right)  & \quad \text{ if } p=n ,
\\
M^{\frac{n-p}{n}} \nr \na h \nr_{L^p(\Om)}^{\frac{p}{n}}  & \quad \text{ if } 1 \le p < n ,
\end{cases}
\end{equation}
where $C$ only depends on $n, p, d_\Om$ and $r_i$, and, as usual, $M:= \nr \na u \nr_{L^\infty(\Om)} = \max_{\ol{\Om}}|\na u|$ can be estimated by means of \eqref{eq:Gradient bound for u with uniform deviation}.

Now we recall that we chose $z$ as a global maximum point of $-u$ in $\ol{\Om}$.
With this choice we have that $z\in\Om$ and $h_i(z)=0$ and $L[h_i]=0$. Moreover, we claim that
\begin{equation}\label{eq:lower bound distance z}
	\de_{\pa\Om} (z) \ge \frac{r_i^2}{M} ,
\end{equation}
where $r_i$ is the radius of the uniform interior sphere condition and $M:=\max_{\ol{\Om}}|\na u|$ (which, as already noticed, can be estimated by means of \eqref{eq:Gradient bound for u with uniform deviation}).
We mention that various finer lower bounds for $\de_{\pa\Om} (z)$ may be obtained (see \cite{MaPog_HotSpotsMA2022}), but  \eqref{eq:lower bound distance z} is precisely suited to the parameters considered in this paper. 

To prove \eqref{eq:lower bound distance z}, we choose $x_z \in \pa\Om$ such that $\de_{\pa\Om}(z)=|x_z - z|$, $y\in\Om$ such that $\de_{\pa\Om} (y)=r_i $, and compute
\begin{equation*}
	\frac{r_i^2}{2} \le - u(y) \le \max_{\ol{\Om}} (-u) = -u(z) = u(x_z) -u(z) \le M \, \de_{\pa\Om} (z) ,
\end{equation*}
where in the first inequality we used \eqref{distanza}.

Using Theorem \ref{thm:Poincare for solutions} (with $f:=h_i$ for $i=1,\dots,n$, $x_0 := z$ and $\mathcal{L}:=L$) and item (iv) of Remark~\ref{rem:(iii)if r_i then b_0, theta, a, and C_p can be estimated (iv) for Poincare with L and r_i.} in the Appendix, 
\eqref{eq:lower bound distance z}, and \eqref{eq:Gradient bound for u with uniform deviation}, we thus find that the inequality
\begin{equation}\label{eq:Poincare su gradiente di h}
	\nr \na h \nr_{L^r (\Om) } \le C ( r, n, r_i, d_\Om ) \, \nr \de_{\pa\Om}^{\frac{1}{2}} D^2 h \nr_{L^2(\Om)} 
\end{equation}
holds true for any $r$ such that $2 \le r \le \frac{2n}{n-1}$ with a constant $C ( r, n, r_i, d_\Om )$ that only depends on $r$, $n$, $r_i$, $d_\Om$.

Hence, combining \eqref{eq:Poincare su gradiente di h} and \eqref{eq:distance and u relation: Lipschitz} we obtain that
\begin{equation}\label{eq:PRONTA Poincare su gradiente di h con peso (-u)}
	\nr \na h \nr_{L^r (\Om) } \le C(r, n, r_i, d_\Om) \, \nr (-u)^{\frac{1}{2}} D^2 h \nr_{L^2(\Om)} 
\end{equation}
holds true for any $r$ such that $2 \le r \le \frac{2n}{n-1}$ with a constant $C(r, n, r_i, d_\Om)$ that only depends on $r$, $n$, $r_i$, $d_\Om$.

We now distinguish three cases.

(i) When $n=2$, combining \eqref{eq:interpolating inequality for h} with $p:=4$ and \eqref{eq:PRONTA Poincare su gradiente di h con peso (-u)} with $r:= 4$ gives that
\begin{equation}\label{eq:in proof case n=2}
	\rho_e -\rho_i \le C (n , r_i , d_\Om) \nr (-u)^{\frac{1}{2}} D^2 h \nr_{L^2 (\Om) } .
\end{equation}

(ii) When $n=3$, combining \eqref{eq:interpolating inequality for h} with $p:=n=3$ and \eqref{eq:PRONTA Poincare su gradiente di h con peso (-u)} with $r:= 3$ gives that
\begin{equation}\label{eq:in proof case n=3}
	\rho_e -\rho_i \le C (n , r_i , d_\Om) \nr (-u)^{\frac{1}{2}} D^2 h \nr_{L^2 (\Om)} \max \left[ \log \left( \frac{ M }{ \nr (-u)^{\frac{1}{2}} D^2 h \nr_{L^2 (\Om)}}    \right) , 1  \right].
\end{equation}
Here, we used that the function $t \to t \max \left[ \log \left( \frac{A}{t} \right) , 1  \right]$ is non-decreasing for any $A>0$.

(iii) When $n\ge 4$, combining \eqref{eq:interpolating inequality for h} with $p:= 2n/(n-1)$ and \eqref{eq:PRONTA Poincare su gradiente di h con peso (-u)} with $r:= 2n/(n-1)$ gives
\begin{equation}\label{eq:in proof case n>=4}
	\rho_e -\rho_i \le C (n , r_i , d_\Om) M^{\frac{n-3}{n-1}} \nr (-u)^{\frac{1}{2}} D^2 h \nr_{L^2(\Om)}^{\frac{2}{n-1}}.
\end{equation}

The desired conclusion in \eqref{eq:stability theorem Serrin with rho_e - rho_i} follows by combining \eqref{eq:in proof case n=2}, \eqref{eq:in proof case n=3}, and \eqref{eq:in proof case n>=4}, with the estimate for $M$ in \eqref{eq:Gradient bound for u with uniform deviation} and the inequality
\begin{equation*}
	\nr (-u)^{\frac{1}{2}} D^2 h \nr_{L^2(\Om)} \le C( n, r_i, d_\Om ) \,  \de^{1/2} ,
\end{equation*}
which follows from \eqref{eq:weighted Hessian and RHS}.

\bigskip

\end{proof}

\section{Higher order Soap Bubble-type theorems for Hessian operators}\label{section:SBT}

\subsection{Symmetry results and proof of Theorem \ref{thm:symmetry results summary}}\label{subsec:SBTandSerrin type symmetry results}
	
	We start by establishing the following fundamental integral identity, which extends to $k \ge 1$ that established in \cite[Identity (2.6))]{MaPoggesiJAM2019} for $k=1$.
	
	\begin{teorema}\label{thm:Higher oreder SBT Fundamental Identity}
	Let $\Om \subset \RR^n$ be a $C^2$, bounded, connected, open set and let $u\in C^2(\ol{\Om})$ be a solution of \eqref{eq:Hessian Dirichlet problem}.
	
	Then, setting 
	\begin{equation}\label{def: R new definition}
		R:= \frac{1}{P(\Om)} \int_{\pa\Om} |\na u| \, d\cH^{N-1} ,
	\end{equation}
	the following integral identity holds true:
	\begin{equation}\label{eq:Fundamental identity higher oreder SBT}
	\begin{split}
	\binom{n}{k}^{-1} \int_\Om L[P] \, dx
	+ k \, \widetilde{\mathcal{D}}_1 + \widetilde{\mathcal{D}}_2
	= 
	\int_{\pa\Om} \left( \frac{1}{R^k} - H_{k} \right) | \na u|^{k+1} \, d\cH^{n-1} ,
	\end{split}
	\end{equation}
	where
	\begin{equation*}
		\widetilde{\mathcal{D}}_1:=  \int_\Om \left[ 1 - \frac{S_{k+1}(D^2 u)}{\binom{n}{k+1}} \right] \, dx ,   
	\end{equation*}
	\begin{equation*}
	\widetilde{\mathcal{D}}_2:=   \frac{1}{R^k} \int_{\pa\Om} | \na u|^{k+1} \, d\cH^{n-1} - 	 \int_{\pa\Om} |\na u| d\cH^{n-1}   . 
	\end{equation*}
	\end{teorema}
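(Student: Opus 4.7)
The identity follows by a direct calculation that combines the explicit representation of $L[P]$ derived in the proof of Theorem~\ref{L[P]} with the divergence structure of the $k$-Hessian and the curvature identity~\eqref{hksk}. The outline is: first compute $\int_\Om L[P]\,dx$ in closed form; then substitute into the left-hand side of \eqref{eq:Fundamental identity higher oreder SBT}, so that several terms cancel; and finally verify the single remaining auxiliary identity, which relates $\int_\Om S_{k+1}(D^2u)\,dx$ to $\int_{\pa\Om} H_k|\na u|^{k+1}\,d\cH^{n-1}$.

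For the first step, substituting the PDE $S_k(D^2u)=\binom{n}{k}$ into formula \eqref{L_new} gives
\begin{equation*}
L[P]=\binom{n}{k}\Bigl[(\Delta u-n)+(n-k)\Bigl(1-\tfrac{S_{k+1}(D^2u)}{\binom{n}{k+1}}\Bigr)\Bigr].
\end{equation*}
Integrating over $\Om$ and applying the divergence theorem to $\int_\Om \Delta u\,dx$ (using $\na u=|\na u|\,\nu$ on $\pa\Om$, which holds since $u=0$ on $\pa\Om$ and $u\le 0$ in $\Om$), I obtain
\begin{equation*}
\binom{n}{k}^{-1}\!\int_\Om L[P]\,dx \;=\; \int_{\pa\Om}|\na u|\,d\cH^{n-1}-n|\Om|+(n-k)\,\widetilde{\mathcal{D}}_1.
\end{equation*}

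For the second step, I plug this into the LHS of \eqref{eq:Fundamental identity higher oreder SBT} and exploit that, by the definition of $R$, $\int_{\pa\Om}|\na u|\,d\cH^{n-1}=R\,P(\Om)$: the term $\int_{\pa\Om}|\na u|\,d\cH^{n-1}$ produced above cancels with the $-\int_{\pa\Om}|\na u|\,d\cH^{n-1}$ appearing in $\widetilde{\mathcal{D}}_2$, while the coefficients of $\widetilde{\mathcal{D}}_1$ collapse from $(n-k)+k$ to $n$. After also matching the $\tfrac{1}{R^k}\int_{\pa\Om}|\na u|^{k+1}\,d\cH^{n-1}$ contributions on the two sides, the claim reduces to the auxiliary identity
\begin{equation*}
n\int_\Om \frac{S_{k+1}(D^2u)}{\binom{n}{k+1}}\,dx \;=\; \int_{\pa\Om} H_k\,|\na u|^{k+1}\,d\cH^{n-1}.
\end{equation*}

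Finally, to establish this auxiliary identity I use the divergence form~\eqref{skdiv} applied to $S_{k+1}$ and the divergence theorem to write $\int_\Om S_{k+1}(D^2u)\,dx=\tfrac{1}{k+1}\!\int_{\pa\Om}S_{k+1}^{ij}u_j\nu_i\,d\cH^{n-1}$. Since $u_j=|\na u|\nu_j$ on $\pa\Om$, the boundary integrand becomes $\tfrac{|\na u|}{k+1}S_{k+1}^{ij}\nu_i\nu_j$, and \eqref{hksk} applied with $k+1$ in place of $k$ gives $S_{k+1}^{ij}\nu_i\nu_j=\binom{n-1}{k}H_k|\na u|^k$ on $\pa\Om$. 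The combinatorial identity $n\binom{n-1}{k}=(k+1)\binom{n}{k+1}$ then produces precisely the required normalization. No essential obstacle is expected: all ingredients are already available from earlier sections, and the only care needed is the book-keeping that reduces the proposed identity to the clean $S_{k+1}$--$H_k$ relation above.
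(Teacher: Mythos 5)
Your proof is correct and takes essentially the same route as the paper: both start from the expression for $L[P]$ (you use the rewritten form \eqref{L_new}, the paper uses \eqref{L}, which are equivalent), both integrate and apply the divergence theorem to $\Delta u$ and to $S_{k+1}$ in divergence form via \eqref{skdiv} and \eqref{hksk}, and both close by inserting the $\frac{1}{R^k}\int_{\pa\Om}|\nabla u|^{k+1}$ term. The only cosmetic difference is that you isolate the relation $n\int_\Om S_{k+1}/\binom{n}{k+1}\,dx=\int_{\pa\Om}H_k|\nabla u|^{k+1}\,d\mathcal{H}^{n-1}$ as a final lemma, whereas the paper records the analogous intermediate step as identity \eqref{eq:in passing Identity for SBT}.
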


	\begin{oss}\label{rem:Positivity of deficits}
	{\rm
	$(i)$ Recalling Theorem \ref{L[P]} we have that $L[P] \ge 0$ in $\Om$, and $L[P] \equiv 0$ if and only if $\Om$ is a ball. Thanks to this feature, $L[P]$ will be used as a {\it spherical detector}.
		
	$(ii)$ Recalling Newton's inequality \eqref{newton} and the equation $S_k(D^2u)= \binom{n}{k}$, it is immediate to check that
	\begin{equation*}
	\widetilde{\mathcal{D}}_1 = \int_\Om \left[  \left( \frac{S_{k}(D^2 u)}{\binom{n}{k}} \right)^{\frac{k+1}{k}} - \frac{S_{k+1}(D^2 u)}{\binom{n}{k+1}} \right] \, dx  \ge 0.
	\end{equation*}
		
	$(iii)$ By using the H\"older inequality and the definition of $R$ in \eqref{def: R new definition}, it is easy to check that
	\begin{equation*}
	 \int_{\pa\Om} |\na u| d\cH^{n-1}   \le \frac{1}{R^{k}} \int_{\pa\Om} |\na u|^{k+1} d\cH^{n-1}  ,
	\end{equation*}
	and hence we have that
	\begin{equation*}
			\widetilde{\mathcal{D}}_2 \ge 0 .
	\end{equation*}
%%%%%%%%%%%%%%%
%Notice that, recalling the definition of $R$ in \eqref{def: R new definition}, the {\it %H\"older deficit} $	\widetilde{\mathcal{D}}_2$ can be rewritten as follows
%\begin{equation}\label{eq:Holder deficit written as deviation}
%		\widetilde{\mathcal{D}}_2 = \frac{1}{R^k} \int_{\pa\Om} \left( | \na u|^{k+1} - %R^{k+1} \right) \, d\cH^{n-1} .
%\end{equation}
%%%%%%%%%%%%%%%%%%%%%%%%

By $(i)$, $(ii)$, and $(iii)$, we have that all the summands at the left-hand side of \eqref{eq:Fundamental identity higher oreder SBT} are non-negative. Thus, in presence of conditions that force the right-hand side to be non-positive, we have that actually, all the inequalities hold with the equality sign and all the summands at the left-hand side of \eqref{eq:Fundamental identity higher oreder SBT} must be zero; in particular, we have that $L[P]\equiv 0$, and hence, recalling by $(i)$ that $L[P]$ is a {\it spherical detector}, the symmetry follows. We stress that the same conclusion might also be obtained using that \begin{equation*}
	\widetilde{\mathcal{D}}_2 =0 \quad \text{ if and only if } \quad | \na u | \equiv R ;
\end{equation*}
in fact, in this case we have that the solution $u$ to \eqref{eq:Hessian Dirichlet problem} also satisfies the overdetermined problem \eqref{torsionsk_intro}, and hence, by Theorem \ref{Serrin}, $\Om$ must be a ball.

}
	\end{oss}

\begin{proof}[Proof of Theorem \ref{thm:Higher oreder SBT Fundamental Identity}]
	Recalling \eqref{L} and the equation $S_k(D^2u)= \binom{n}{k}$, a direct computation shows that
	\begin{equation*}
		L[P] + \binom{n}{k} k \left[ 1 - \frac{S_{k+1}(D^2 u)}{\binom{n}{k+1}}  \right] = \binom{n}{k} \De u - \frac{n}{n-k} (k+1) S_{k+1} (D^2 u) .
	\end{equation*}
	Hence, by using the divergence theorem, \eqref{skdiv} and \eqref{hksk}, we compute that
	\begin{equation}\label{eq:in passing Identity for SBT}
		\begin{split}
			\int_\Om \left\lbrace L[P]  + \binom{n}{k} k \left[  1 - \frac{S_{k+1}(D^2 u)}{\binom{n}{k+1}}  \right] \right\rbrace \, dx
			= & \binom{n}{k} \int_{\pa\Om} |\na u| d\cH^{n-1} 
			\\
			& - \binom{n}{k} \int_{\pa\Om} H_k |\na u|^{k+1} d\cH^{n-1} .
		\end{split}
	\end{equation}
	Adding and subtracting 
	\begin{equation*}
		\binom{n}{k} \frac{1}{R^k} \int_{\pa\Om} |\na u|^{k+1} d\cH^{n-1},
	\end{equation*}
	the desired identity easily follows.	
\end{proof}

	\begin{cor}\label{cor:Higher order SBT with inequality}
	Let $\Om \subset \RR^n$ be a $C^2$, bounded, connected, open set and let $u\in C^2(\ol{\Om})$ be a solution of \eqref{eq:Hessian Dirichlet problem}. If
	\begin{equation}\label{eq:Ineq SBT}
		H_k \ge \frac{1}{R^k} \text{ on } \pa\Om , \quad \text{where } R \text{ is as in \eqref{def: R new definition}},
	\end{equation}
	then $\Om$ must be a ball of radius $R$ and \eqref{eq:Ineq SBT} is an equality. 
	
	In particular, if %$\Om$ is a $C^2$ %\footnote{\color{red} Qui o mettiamo smooth generico oppure $C^{3,1}$ (visto che nella dim diciamo che tale regularity sarebbe sufficiente). Oppure togliamo tutto e diciamo solo: "In particular, if $\pa\Om$ has constant..." sottointendendo che siamo nelle ipotesi della frase precedente.}
    %smooth, bounded, connected, open set such that 
    $\pa\Om$ has constant $k$-mean curvature, then \eqref{eq:Ineq SBT} holds true and hence $\Om$ must be a ball of radius $R$ and $$H_k=\frac{1}{R^k}=\frac{1}{\hat{R}^k}.$$
	\end{cor}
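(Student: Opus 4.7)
The plan is to deduce both parts of the corollary directly from the fundamental integral identity \eqref{eq:Fundamental identity higher oreder SBT} of Theorem \ref{thm:Higher oreder SBT Fundamental Identity}, together with the three positivity observations collected in Remark \ref{rem:Positivity of deficits}. For the main implication, I would first note that under the hypothesis $H_k \ge 1/R^k$ on $\pa\Om$ the right-hand side of \eqref{eq:Fundamental identity higher oreder SBT} is non-positive, while the left-hand side is a sum of three non-negative quantities: $\binom{n}{k}^{-1}\int_\Om L[P]\,dx \ge 0$ by Theorem \ref{L[P]}; $k\widetilde{\mathcal{D}}_1 \ge 0$ by Newton's inequality \eqref{newton} applied to $D^2 u$ (combined with the PDE $S_k(D^2 u)=\binom{n}{k}$); and $\widetilde{\mathcal{D}}_2 \ge 0$ by Jensen's inequality. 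Hence each of the three summands on the left must vanish.

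Next, from $\int_\Om L[P]\,dx = 0$ and $L[P]\ge 0$ pointwise, I would conclude $L[P]\equiv 0$ in $\Om$, so the equality case of Theorem \ref{L[P]} identifies $\Om$ with a ball of some radius $r$ and $u(x)=(|x|^2-r^2)/2$ up to translation. A direct evaluation of \eqref{def: R new definition} on this explicit solution gives $R=r$, so $\Om=B_R$. Finally, since $|\na u|>0$ on $\pa\Om$ by Hopf's boundary-point lemma, the vanishing of the right-hand-side integrand of \eqref{eq:Fundamental identity higher oreder SBT} upgrades the assumed inequality to an equality, giving $H_k = 1/R^k$ on $\pa\Om$.

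For the ``in particular'' statement, the natural strategy is to show that constant $H_k$ forces the hypothesis $H_k \ge 1/R^k$ and then invoke the main implication. Writing $H_k \equiv c$, the Minkowski identity \eqref{minkowski} together with $\int_{\pa\Om}\langle x,\nu\rangle\,d\mathcal{H}^{n-1}=n|\Om|$ gives $c\cdot n|\Om|=nW_k(\Om)$, i.e.\ $c=W_k(\Om)/|\Om|$. The required inequality $c \ge 1/R^k$ can then be obtained by combining Aleksandrov--Fenchel-type control \eqref{aleksandrov-fenchel} for the quermassintegrals with the comparison $R \ge \hat R$ recorded in Remark \ref{rem: R and hat R}. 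Once \eqref{eq:Ineq SBT} is verified in this way, the main implication applies and delivers $\Om = B_R$ and $H_k = 1/R^k = 1/\hat R^k$.

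The principal delicate point will be the ``in particular'' step: the fundamental identity taken in isolation only yields the one-sided inequality $c \le 1/R^k$ for constant $H_k$ (as LHS$\,\ge 0$ forces $1/R^k - c \ge 0$), so genuinely closing the loop requires exploiting the geometric/isoperimetric content of the Aleksandrov--Fenchel inequalities, rather than relying on the integral identity alone.
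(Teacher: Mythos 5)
Your treatment of the main implication (from $H_k\ge 1/R^k$ to $\Om=B_R$ and equality) is correct and essentially identical to the paper's: you identify the right-hand side of \eqref{eq:Fundamental identity higher oreder SBT} as non-positive under the hypothesis, use Remark \ref{rem:Positivity of deficits} to make the three left-hand summands non-negative, and deduce $L[P]\equiv 0$ from $\int_\Om L[P]\,dx=0$. (Using Jensen's power-mean inequality instead of H\"older for $\widetilde{\mathcal{D}}_2\ge 0$ is an equivalent substitution, not a genuine difference.)

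The ``in particular'' step, however, has a real gap, which you half-anticipate but do not close. Your plan is to use Aleksandrov--Fenchel \eqref{aleksandrov-fenchel}/\eqref{aleksandrov-fenchel_j0} together with $R\ge\hat R$ to prove $c:=W_k(\Om)/|\Om|\ge 1/R^k$. This chain does not close: AF with $i=0,\ j=k$ yields $W_k\ge\omega_n^{k/n}|\Om|^{(n-k)/n}$, i.e.\ $c\ge(\omega_n/|\Om|)^{k/n}$, while the isoperimetric inequality gives $1/\hat R^k\ge(\omega_n/|\Om|)^{k/n}$ \emph{in the same direction}; so both $c$ and $1/\hat R^k$ are bounded below by the same quantity, which does not let you compare them, and $R\ge\hat R$ only tells you $1/R^k\le 1/\hat R^k$. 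In fact the log-concavity inherent in AF goes the opposite way, giving $W_k/W_0\le(W_1/W_0)^k$, i.e.\ $c\le 1/\hat R^k$ — that combined with the integral identity would give you $c=1/\hat R^k$, but still not $c\ge 1/R^k$ without first knowing $R=\hat R$, which is exactly what you are trying to prove. The paper's actual argument avoids AF entirely: it first notes that, since $\pa\Om$ has an elliptic point, the constant $H_k$ must be \emph{positive} (so that all $H_i>0$, $i\le k$), then applies the \emph{pointwise} Newton inequality for higher mean curvatures $H_{k-1}\ge H_k^{(k-1)/k}$ (Eq.~\eqref{eq: N inequality for k-mean curvature}) inside the Minkowski identity to get $n|\Om|H_k\ge H_k^{(k-1)/k}P(\Om)$, hence $H_k^{1/k}\ge 1/\hat R$, and finally combines this with $\hat R\le R$ (derived from the PDE via Newton's inequality \eqref{newton} and the divergence theorem). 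You should replace the AF argument by this pointwise Newton-plus-Minkowski computation, and you should also supply the missing observation that the constant $H_k$ is strictly positive, which is what makes the pointwise inequality applicable.
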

%%%%%%%%%%%%%%%%%%%%%

\begin{oss}\label{rem: R and hat R}
{\rm  We stress that Corollary \ref{cor:Higher order SBT with inequality} remains true if we replace $R$  (which is that defined in \eqref{def: R new definition}) in \eqref{eq:Ineq SBT} with
\begin{equation}
\hat{R} := \frac{n|\Om|}{P(\Om)}  .
\end{equation}
Notice that, for $k=1$, the divergence theorem shows that $R = \hat{R}$. We stress that, for $k>1$, using $R$ provides a more general statement than using $\hat{R}$, since
\begin{equation*}
 R = \frac{1}{P(\Om)} \int_{\pa\Om} |\na u| \, d\cH^{N-1} = \frac{1}{P(\Om)} \int_{\Om} \De u \, dx \ge \frac{n}{P(\Om)} \int_{\Om} \left( \frac{S_k(D^2 u)}{\binom{n}{k}} \right)^{\frac{1}{k}} \, dx =  \frac{n |\Om|}{P(\Om)} = \hat{R}.
\end{equation*}
Here, we used the divergence theorem, Newton's inequality \eqref{newton}, and the equation $S_k (D^2 u) = \binom{n}{k}$.
%%%%%%%%%%
%in $\Om$.
%%%%%%%%%%%%%%%

We also notice that, for $k$-convex domains, the weaker statement with $\hat{R}$ might also be obtained combining the result for $k=1$ in \cite[Theorem 2.2]{MaPoggesiJAM2019} and the inequality
 	\begin{equation}\label{eq: N inequality for k-mean curvature}
 		H_k^{\frac{k-1}{k}} \le H_{k-1} ,
 	\end{equation}
which holds whenever $H_k \ge 0$.
%%%%%%%%%%%%%%%%%%%%%%%%%%%%%%%%
%For this reason, we decided to state Corollary \ref{cor:Higher order SBT with inequality} with $R$.
%% as defined in \eqref{def: R new definition}.
%%%%%%%%%%%%%%%%%%%%%%%%%%%%%%%%%%%%%%%%%%%%%
}
\end{oss}

\begin{proof}[Proof of Corollary \ref{cor:Higher order SBT with inequality}]
	The claim immediately follows from \eqref{eq:Fundamental identity higher oreder SBT} and Remark~\ref{rem:Positivity of deficits}, 
noticing that the condition \eqref{eq:Ineq SBT} makes the right-hand side of \eqref{eq:Fundamental identity higher oreder SBT} non-positive.
	
	In order to prove the second claim, let us assume that $H_k$ is constant on $\pa\Om$. As $\pa \Om$ has an elliptic point, $H_k$ must be a positive constant (and hence also $H_i >0$ for any $i \le k$).

	On the one hand, by the divergence theorem, \eqref{minkowski}, and \eqref{eq: N inequality for k-mean curvature}, we compute that	
	\begin{equation*}
		\begin{split}
			n|\Om| H_k = \int_{\pa\Om} H_k \langle x, \nu \rangle \, d\cH^{n-1}
			& = \int_{\pa\Om} H_{k-1} \, d\cH^{n-1}
			\\ & \ge  \int_{\pa\Om} H_{k}^{\frac{k-1}{k}} \, d\cH^{n-1} 
			\\
			& = H_{k}^{\frac{k-1}{k}} P(\Om) .
		\end{split}
	\end{equation*}
	On the other hand,
	by \eqref{eq:Hessian Dirichlet problem}, Newton's inequality, and the divergence theorem, we have that
	\begin{equation*}
		n | \Om | = n \int_{\Om} \left( \frac{S_k( D^2 u)}{ \binom{n}{k} } \right)^{ \frac{1}{k} } dx \le \int_{\Om} \De u \, dx  = \int_{\pa\Om} | \na u | \, d\cH^{n-1} .
	\end{equation*}
	Putting all together, we obtain \eqref{eq:Ineq SBT} and hence the conclusion.

We stress that the second claim of Corollary \ref{cor:Higher order SBT with inequality} returns the higher order Soap Bubble Theorem.
%%%%%%%
%, provided that $u \in C^2 (\ol{\Om})$. 
%%%%%%%%%%%%%%%%%%%%%%%
We recall that, whenever $\Om$ is $k-1$-convex and of class $C^{3,1}$, then \cite[Theorem 3.4]{Wang2009} guarantees the existence of a unique solution $u$ of class $C^{3,\al} (\ol{\Om})$ to the Dirichlet problem \eqref{eq:Hessian Dirichlet problem}.
\end{proof}

In passing, in the proof of Theorem \ref{thm:Higher oreder SBT Fundamental Identity} we obtained \eqref{eq:in passing Identity for SBT}, which provides the following interesting corollary involving the overdetermined condition
\begin{equation}\label{eq:alternative overdetermination}
	|\na u| = \frac{1}{H_k^{1/k}} .
\end{equation}

\begin{cor}\label{cor:new overdetermined problem}
Let $\Om \subset \RR^n$ be a $C^2$, bounded, connected, open set, and let $u\in C^2(\ol{\Om})$ be a solution of \eqref{eq:Hessian Dirichlet problem}.
Then the following identity holds true:
\begin{equation}\label{eq:reqritten in passing Identity for SBT}
	\binom{n}{k}^{-1}	\int_\Om  L[P] \, dx +   k \ \widetilde{\mathcal{D}}_1
		= \int_{\pa\Om} \left( 1 - H_k |\na u|^{k} \right) |\na u| d\cH^{n-1} ,
\end{equation}
where 
$\widetilde{\mathcal{D}}_1$ is that defined in Theorem \ref{thm:Higher oreder SBT Fundamental Identity}.

As a consequence, if
\begin{equation*}
	|\na u|^k \, H_k \ge 1  \quad \text{ on } \pa\Om ,
\end{equation*}
then $\Om$ must be a ball. In particular, the same conclusion holds true if the overdetermined condition \eqref{eq:alternative overdetermination} is in force.
\end{cor}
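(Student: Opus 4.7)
The plan is to observe that the identity \eqref{eq:reqritten in passing Identity for SBT} is essentially an immediate rewriting of the intermediate equation \eqref{eq:in passing Identity for SBT} already derived in the proof of Theorem \ref{thm:Higher oreder SBT Fundamental Identity}. Specifically, I would divide \eqref{eq:in passing Identity for SBT} through by $\binom{n}{k}$, recognize that the term $k \int_\Om [1 - S_{k+1}(D^2 u)/\binom{n}{k+1}] \, dx$ on the left-hand side is precisely $k \, \widetilde{\mathcal{D}}_1$, and then factor $|\na u|$ out of the two boundary integrals on the right-hand side to obtain the integrand $(1 - H_k |\na u|^k) |\na u|$. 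Unlike the proof of Theorem \ref{thm:Higher oreder SBT Fundamental Identity}, this corollary does not require the additional step of adding and subtracting $\binom{n}{k} R^{-k} \int_{\pa\Om} |\na u|^{k+1} d\cH^{n-1}$, so the identity \eqref{eq:reqritten in passing Identity for SBT} falls out for free.

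For the rigidity conclusion under the pointwise assumption $|\na u|^k H_k \ge 1$ on $\pa\Om$, the strategy is the by-now standard trichotomy articulated in Remark \ref{rem:Positivity of deficits}. Namely, the left-hand side of \eqref{eq:reqritten in passing Identity for SBT} is a sum of two non-negative quantities: $\binom{n}{k}^{-1} \int_\Om L[P] \, dx \ge 0$ by Theorem \ref{L[P]}, and $k \, \widetilde{\mathcal{D}}_1 \ge 0$ by Newton's inequality \eqref{newton} applied to $D^2 u$ (see item $(ii)$ of Remark \ref{rem:Positivity of deficits}). On the other hand, the assumed inequality $|\na u|^k H_k \ge 1$ on $\pa\Om$ makes the right-hand side non-positive pointwise, and hence non-positive after integration. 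Consequently, both terms on the left must vanish identically; in particular, $L[P] \equiv 0$ in $\Om$. Invoking the equality case in Theorem \ref{L[P]}, this forces $\Om$ to be a ball (and $u$ to be the corresponding quadratic profile, up to translation).

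The particular case of the overdetermined condition \eqref{eq:alternative overdetermination} is then immediate, since $|\na u| = 1/H_k^{1/k}$ on $\pa\Om$ gives $|\na u|^k H_k = 1$, so the inequality hypothesis is trivially satisfied with equality. No real obstacle is anticipated: the entire content is algebraic rearrangement of \eqref{eq:in passing Identity for SBT} together with the sign analysis that the fundamental identity was designed to support. The only minor point of care is to notice that $|\na u|$ can be factored cleanly from the right-hand side of \eqref{eq:in passing Identity for SBT}, which is what distinguishes the alternative overdetermination \eqref{eq:alternative overdetermination} from the constant-$|\na u|$ overdetermination handled by Theorem \ref{thm:Higher oreder SBT Fundamental Identity}.
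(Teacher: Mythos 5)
Your proposal is correct and follows essentially the same route as the paper: divide \eqref{eq:in passing Identity for SBT} by $\binom{n}{k}$, recognize the second left-hand term as $k\,\widetilde{\mathcal{D}}_1$, factor $|\na u|$ out of the right-hand side, and invoke the sign analysis of Remark \ref{rem:Positivity of deficits} together with the equality case of Theorem \ref{L[P]}. Incidentally, your factored form $\int_{\pa\Om}(1-H_k|\na u|^k)|\na u|\,d\cH^{n-1}$ is the correct one — the display inside the paper's own proof omits the trailing $|\na u|$ factor, which is a typo (the corollary statement itself has it right).
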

\begin{proof}
The desired conclusion immediately follows by rewriting \eqref{eq:in passing Identity for SBT} as
	\begin{equation}
	\binom{n}{k}^{-1}	\int_\Om  L[P] \, dx  +  k \int_\Om  \left[  1 - \frac{S_{k+1}(D^2 u)}{\binom{n}{k+1}}  \right] \, dx
		= \int_{\pa\Om} \left( 1 
		-  H_k |\na u|^{k} \right) d\cH^{n-1} ,
\end{equation}
and combining it with Remark \ref{rem:Positivity of deficits}.
\end{proof}

The above corollary extends to $k\ge 1$, the result for $k=1$ contained in \cite[(i) of Theorem 2.4]{MaPoggesiJAM2019}.
%We mention that, for $k=1$, this overdetermined problem has been also studied in the two-phase setting in \cite{Cavallina_IUMJ2022}.

We are now ready for the

\begin{proof}[Proof of Theorem \ref{thm:symmetry results summary}]
Clearly, it is easy to check that $(i)$ implies each of the other statements in $(ii), (iii), (iv), (v), (vi)$.

The implication $(i) \impliedby (ii)$ follows from either Theorem \ref{Serrin} or Remark \ref{oss:rigidty again}.

The implications $(i) \impliedby (iii)$ and $(i) \impliedby (iv)$ follow from Corollary \ref{cor:Higher order SBT with inequality} and Remark \ref{rem: R and hat R}.

The implication $(i) \impliedby (v)$ follows from Corollary \ref{cor:new overdetermined problem}.

The implication $(i) \impliedby (vi)$ follows by combining \eqref{principal_equality}, \eqref{newton}, and Theorem \ref{L[P]}.
\end{proof}

\subsection{Stability for higher order Soap Bubble type theorems: Proof of Theorems \ref{thm:stability k-SBT one ball} and \ref{thm:Stability with bubbling for k-SBT}}\label{subsec:SBT stability}

\begin{proof}[Proof of Theorem \ref{thm:stability k-SBT one ball}]
Let $u$ be the solution of \eqref{eq:Hessian Dirichlet problem} with $k=1$, that is,
\begin{equation*}
	\begin{cases}
		\De u = n \quad & \text{ in } \Om
		\\
		u = 0 \quad & \text{ on } \pa\Om .
	\end{cases}
\end{equation*}
Recall that, in this case, we have
$$
\hat{R} := \frac{n|\Om|}{P(\Om)} = \frac{1}{P(\Om)} \int_{\pa\Om} |\na u| \, d\cH^{N-1} =: R.
$$

From the fundamental identity \cite[(2.6)]{MaPoggesiJAM2019} (or \eqref{eq:Fundamental identity higher oreder SBT} with $k=1$), we see that
\begin{equation}\label{eq:1 in proof SBT stability L2}
	\nr |\na u| - \hat{R} \nr_{L^2(\pa\Om)}^2 \le M^2 \, \hat{R} \int_{\pa\Om} \left(\frac{1}{\hat{R}} - H \right)^+ d\cH^{n-1} ,
\end{equation}
where $M:= \max_{\ol{\Om}} |\na u|$.

Notice that 
\begin{equation}\label{eq:upper bound for hatR}
\hat{R} = \frac{n|\Om|}{P(\Om)} \le \left( \frac{|\Om|}{\omega_n} \right)^{1/n} \le \frac{d_\Om}{2},
\end{equation}
which follows from the isoperimetric inequality.

Since $H_k \ge 0$, by Newton's inequality we have that
\begin{equation}\label{eq:NewtonIneq for H_k complete chain}
	H_k^{1/k} \le H_{k-1}^{1/(k-1)} \le \dots \le H_1=H ,
\end{equation}
and hence 
\begin{equation}\label{eq:inequality RHS SBT FROM CLASSICAL TO K VIA NEWTON}
	\left(\frac{1}{\hat{R}} - H \right)^+ \le \left(\frac{1}{\hat{R}} - H_k^{1/k} \right)^+ \le \hat{R}^{k-1} \left(\frac{1}{\hat{R}^k} - H_k \right)^+ ,	
\end{equation}
where the last inequality can be easily verified noting that, for any $0 \le a \le b$, we have that
\begin{equation*}
	b^k - a^k \ge b^{k-1} (b-a) .
\end{equation*}
Combining \eqref{eq:1 in proof SBT stability L2} and \eqref{eq:inequality RHS SBT FROM CLASSICAL TO K VIA NEWTON} we obtain
\begin{equation}\label{eq:2 in proof SBT stability L2}
	\nr |\na u| - \hat{R} \nr_{L^2(\pa\Om)}^2 \le M^2 \, \hat{R} \int_{\pa\Om} \left(\frac{1}{\hat{R}} - H_k^{1/k} \right)^+ d\cH^{n-1} 
	\le  M^2 \, \hat{R}^k \int_{\pa\Om} \left(\frac{1}{\hat{R}^k} - H_k \right)^+ d\cH^{n-1}. 
\end{equation}

Since $H_k \ge 0$, by \eqref{eq:NewtonIneq for H_k complete chain} we also have that $H\ge 0$, that is, $\Om$ is a mean-convex domain;
thus, $M$ can be estimated as follows (see, e.g., \cite[Lemma 2.2]{Poggesi_JMPA2025} and \cite[Lemma 2.2]{MaPog_HotSpotsMA2022}):
\begin{equation}\label{eq:upper bound M for k=1 and mean convex}
M^2 = \max_{\ol{\Om}} |\na u|^2 \le 2 n \, \max_{\ol{\Om}} (-u) \le n \, d_\Om^2 .
\end{equation}
Using \cite[(i) of Theorem 5.1]{PaPoRo_MZ2024}, noting that 
\begin{equation*}
\nr |\na u|^2 - \hat{R}^2 \nr_{L^2(\pa\Om)} \le \left( \max_{\ol{\Om}} |\na u| + \hat{R} \right) \nr |\na u| - \hat{R} \nr_{L^2(\pa\Om)},
\end{equation*}
and recalling \eqref{eq:upper bound for hatR} and \eqref{eq:upper bound M for k=1 and mean convex}, we obtain that  
\begin{equation*}
	\nr |x-z| - \hat{R} \nr_{L^2(\pa\Om)} \le C(n, r_i, d_\Om) \nr |\na u| - \hat{R} \nr_{L^2(\pa\Om)} , 
\end{equation*}
which combined with \eqref{eq:2 in proof SBT stability L2} (and again \eqref{eq:upper bound for hatR} and \eqref{eq:upper bound M for k=1 and mean convex}) leads to \eqref{eq:L2 stability for k-SBT}.

The Hausdorff stability in \eqref{eq:stability HK rhoei} follows immediately by combining \eqref{eq:inequality RHS SBT FROM CLASSICAL TO K VIA NEWTON} and \cite[Theorem 3.7]{MaPogNearlyCVPDE2020} (replacing the application of \cite[Theorem 2.10]{MaPogNearlyCVPDE2020} with that of \cite[Theorem 3.4]{MaPog_MinE2023}, and using that $\nr \na h \nr_{L^\infty(\Om)}$ appearing in the latter can be estimated by means of $\nr \na h \nr_{L^\infty(\Om)} \le M + d_\Om$ and \eqref{eq:upper bound M for k=1 and mean convex}).
\end{proof}

We conclude by providing the 
\begin{proof}[Proof of Theorem \ref{thm:Stability with bubbling for k-SBT}]
	The desired result follows by combining \cite[Theorem 1.4]{Poggesi_JMPA2025}
    %%%%%%%%%%%%%%%%%%%%%%%
    %\footnote{v3 in arXiv} 
    %%%%%%%%%%%%%%%%%%%%%%%%%%%%%%%%%
    and \eqref{eq:inequality RHS SBT FROM CLASSICAL TO K VIA NEWTON}.
\end{proof}

\section*{\centering Appendix }\sectionmark{Appendix}
\appendix

\section{Sobolev-Poincar\'e type inequalities for solutions of homogeneous uniformly elliptic equations and general interpolation inequalities} \sectionmark{Appendix} \label{appendix}

In this Appendix, we collect some results which are useful in the study of the quantitative stability of the symmetry results for the Hessian Dirichlet problem \eqref{eq:Hessian Dirichlet problem}. 

We start providing a weighted Sobolev-Poincar\'e-type inequality for solutions of homogeneous uniformly elliptic equations, with a proof that allows to obtain explicit estimates of the constants. Since the result may be of independent interest we will present it for general uniformly elliptic linear operator in divergence form (even though in our application we will apply it to the operator $L$ defined in \eqref{ell}) and within the general class of John domains.

\begin{definizione} 
A domain $\Om \subset \mathbb{R}^n$ is a $b_0$-John domain if each pair of distinct points $x_1$ and $x_2$ in $\Om$ can be joined by a curve $\gamma: [0,1] \to \Om$ such that
\begin{equation*}
	\de_{\pa\Om} ( \gamma ( t ) ) \ge \frac{ \max \left\lbrace \gamma(t) - x_1 , \gamma(t) - x_2 \right\rbrace }{b_0} .
\end{equation*}
\end{definizione}

\begin{teorema}\label{thm:Poincare for solutions}
Let $\Om\subset \RR^n$ be a bounded $b_0$-John domain and let $x_0\in\Om$. Let $\mathcal{L}$ be a uniformly elliptic linear operator in divergence form defined by
\begin{equation*}
\mathcal{L} f := \mathrm{div} \left[A(x) \na f \right], \quad \text{ for } x\in\Om,
\end{equation*}
where $A(x)$ is a symmetric $n \times n$ matrix whose entries are bounded measurable functions on $\Om$ satisfying the uniform ellipticity condition
\begin{equation*}
	\la | \xi |^2 \le \langle A(x) \xi, \xi \rangle \le \La | \xi |^2 \quad \text{ for any } x \in \Om, \, \xi \in \RR^n ,
\end{equation*}
where $\la$ and $\La$ are positive constants.

Let $r$, $p$, and $\al$ be three numbers such that, either
\begin{equation}\label{eq:p and r as in classical Poincare}
	r=p \in \left[1 , \infty \right), \quad \al=0 ,
\end{equation}
or
\begin{equation}\label{eq:p and r as in Sobolev-Poincare}
0 \le \al \le 1 , \quad p(1-\al) < n , \quad 	1 \le p \le r \le \frac{np}{n-p(1-\al)} .
\end{equation}

Then, there exists a positive constant $C_{r,p,\al}(\Om,x_0)$ such that
\begin{equation}\label{eq:Sobolev Poincare ineq for general solution f}
	\nr f \nr_{L^r(\Om)} \le C_{r, p,\al}(\Om,x_0) \nr \de_{\pa\Om}^\al \na f \nr_{L^p(\Om)} ,
\end{equation}
for every function $f$ -- with $f \in L^1_{loc}(\Om)$ and $\de_{\pa\Om}^\al \na f \in L^p(\Om)$, satisfying 
$$\mathcal{L} f=0 \text{ in } \Om, \, \,  f(x_0) = 0, \quad x_0\in\Omega.$$
Moreover, the constant $C_{r,p,\al}(\Om,x_0)$ can be estimated (from above) in terms of $n, r, p, \al$, $\la$, $\La$, (upper bounds on) $b_0$ and $d_\Om$, and (a lower bound on) $\de_{\pa\Om}(x_0)$ only.
\end{teorema}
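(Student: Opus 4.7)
The plan is to decouple the proof into two parts: first, invoke a weighted Sobolev--Poincar\'e inequality on $b_0$-John domains that estimates $f$ modulo an unknown additive constant; second, use the PDE hypothesis $\mathcal{L}f=0$ together with $f(x_0)=0$ to control that constant via interior elliptic regularity.

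For the first part, I would appeal to the Hurri-Syrj\"anen/Chua/Boman-type weighted Sobolev--Poincar\'e inequality on $b_0$-John domains: under either \eqref{eq:p and r as in classical Poincare} or \eqref{eq:p and r as in Sobolev-Poincare}, there exists a constant $C_1=C_1(n,r,p,\al,b_0,d_\Om)$ such that
$$
\inf_{c\in\RR}\,\nr f-c\nr_{L^r(\Om)} \le C_1 \,\nr \de_{\pa\Om}^{\al}\na f\nr_{L^p(\Om)}
$$
for every $f$ in the natural class. The standard route is the Boman chain decomposition: cover $\Om$ by a countable family of Whitney-type balls, apply the classical (weighted) local Sobolev--Poincar\'e on each ball -- using that $\de_{\pa\Om}$ is comparable to a constant on each Whitney cube, so the weight is absorbed in the local constant -- and then sum via the chain condition provided by the John property. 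Careful bookkeeping along the chain yields the explicit dependence of $C_1$ on $(b_0,d_\Om)$.

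For the second part, let $c^*$ be a (near-)minimizer in the above infimum; a concrete choice is $c^*=f_\Om$, the integral mean. Since $\mathcal{L}$ annihilates constants, $v:=f-c^*$ is still a weak solution of $\mathcal{L}v=0$, so the De Giorgi--Nash--Moser local boundedness estimate (for instance, \cite{}-style, Gilbarg--Trudinger Theorem~8.17 for divergence-form uniformly elliptic equations) applied on $B_\rho(x_0)\Subset\Om$ with $\rho:=\de_{\pa\Om}(x_0)/2$ gives
$$
|v(x_0)| \le \sup_{B_{\rho/2}(x_0)}|v| \le C_2(n,\la,\La,r)\,\rho^{-n/r}\,\nr v\nr_{L^r(B_\rho(x_0))}.
$$
Since $f(x_0)=0$, this reads $|c^*|\le C_2\,\rho^{-n/r}\,\nr f-c^*\nr_{L^r(\Om)}$. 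The triangle inequality
$$
\nr f\nr_{L^r(\Om)} \le \nr f-c^*\nr_{L^r(\Om)} + |c^*|\,|\Om|^{1/r}
$$
then, together with the bound from the first part, yields \eqref{eq:Sobolev Poincare ineq for general solution f} with constant
$$
C_{r,p,\al}(\Om,x_0) = \bigl(1+C_2\,\rho^{-n/r}|\Om|^{1/r}\bigr)\, C_1,
$$
explicitly estimable in terms of $n,r,p,\al,\la,\La$, upper bounds on $b_0$ and $d_\Om$, and a lower bound on $\de_{\pa\Om}(x_0)$.

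The main obstacle will be the first step: the weighted Sobolev--Poincar\'e inequality on a general $b_0$-John domain is delicate -- particularly in the borderline case $r=np/(n-p(1-\al))$ and when $\al>0$ -- and tracking the constant explicitly in the listed geometric parameters (rather than black-boxing it from the literature) requires some work. The PDE input in the second step is classical once a lower bound on $\de_{\pa\Om}(x_0)$ is available, as is hypothesized, so that $C_2\rho^{-n/r}$ depends cleanly on $(n,\la,\La,r)$ and $\de_{\pa\Om}(x_0)$. A minor bookkeeping point is that if $r>p$ one may instead apply Moser's $L^p$-to-$L^\infty$ version directly and then bound $\nr f-c^*\nr_{L^p}$ via $|\Om|^{1/p-1/r}\nr f-c^*\nr_{L^r}$; this only changes the exponent of $\rho$ and $|\Om|$ in the final constant.
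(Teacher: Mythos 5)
Your proposal is correct, and its first half — reducing to a weighted Sobolev--Poincar\'e inequality on $b_0$-John domains of the form $\inf_c \| f-c\|_{L^r(\Om)} \le C_1 \| \de_{\pa\Om}^\al \na f\|_{L^p(\Om)}$, via the Hurri-Syrj\"anen/Boman chain argument — is exactly the first step in the paper's proof (which simply invokes Hurri-Syrj\"anen and notes that the constants can be tracked). Where you genuinely diverge is in controlling the additive constant $c^*$. You apply the De Giorgi--Nash--Moser local boundedness estimate to the solution $v=f-c^*$, getting $|v(x_0)| \le C_2\, \rho^{-n/r}\|v\|_{L^r(B_\rho(x_0))}$ with $\rho = \de_{\pa\Om}(x_0)/2$, and conclude by the triangle inequality. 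The paper instead invokes Caffarelli's generalized mean value theorem: there is a monotone family of domains $D_\rho(x_0)$ sandwiched between balls $B_{c\rho}(x_0)\subset D_\rho(x_0)\subset B_{C\rho}(x_0)$, with $c,C$ depending only on $n,\la,\La$, on which $f(x_0)$ coincides \emph{exactly} with the average $f_{D_\rho(x_0)}$; combined with the elementary comparison of means over nested sets, this gives $\|f - f(x_0)\|_{L^r(\Om)} \le 2(|\Om|/|D_\rho(x_0)|)^{1/r}\|f-f_{r,\Om}\|_{L^r(\Om)}$ directly. Both routes yield explicit constants depending on the advertised parameters; the paper's approach replaces the Moser $L^\infty$--$L^r$ inequality with an exact mean-value identity and so avoids any supremum bound, while yours uses a more widely cited classical estimate (Gilbarg--Trudinger Theorem 8.17) and is perhaps more immediately familiar. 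Either is an acceptable completion of the argument, and the bookkeeping in your final constant matches what the paper records.
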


The above inequality was first noticed by Ziemer \cite{ziemer} for harmonic functions (i.e., in the case of the classical Laplacian $\mathcal{L}:=\De$) in the case \eqref{eq:p and r as in classical Poincare}. The extension to homogeneous linear equations in divergence form with locally bounded coefficients in the weighted setting with $0 \le \al\le 1$ and $r=p \in \left[1 , \infty \right)$
appeared in Example 2.5 of \cite{straube} (where it is proved for $C^{0,\al}$ domains). Their proof is based on a standard compactness argument (which dates back at least to Morrey \cite{Morrey1966}), and hence does not provide explicit estimates for the constant $C_{p, p,\al}(\Om,x_0)$ appearing in the inequality. 
Notice that \eqref{eq:p and r as in Sobolev-Poincare}-\eqref{eq:Sobolev Poincare ineq for general solution f} provides the improvement of the Poincar\'e-type inequality \cite[Example 2.5]{straube} to its corresponding Sobolev-Poincar\'e-type inequality, which holds true when $p(1-\al)<n$, and for $b_0$-John domains.
Moreover, our proof
provides  explicit estimates, leveraging the proof of the weighted Sobolev-Poincar\'e type inequalities (for general Sobolev functions) obtained by Hurri-Syrjanen \cite{Hurri_1988,Hurri_PAMS1994} and the generalized mean value theorem for linear uniformly elliptic equations discovered by Caffarelli~\cite{Caffarelli_FermiLectures1998}. 

\begin{proof}[Proof of Theorem \ref{thm:Poincare for solutions}]
Theorem 1.3 in \cite{Hurri_1988} -- resp., Theorem 1.4 in \cite{Hurri_PAMS1994} -- shows that, if $r,p,\al$ are as in \eqref{eq:p and r as in classical Poincare} -- resp., \eqref{eq:p and r as in Sobolev-Poincare} -- then the inequality
\begin{equation}\label{eq:Poincare from HS}
	\nr f - f_{r, \Om} \nr_{L^r(\Om)} \le \Tilde{C}_{r, p,\al}(\Om) \nr \de_{\pa\Om}^\al \na f \nr_{L^p(\Om)}	
\end{equation}
holds true for every $f \in L^1_{loc}(\Om)$ such that $\de_{\pa\Om}^\al \na f \in L^p(\Om)$. Here, $f_{r, \Om}$ denotes the ``$r$-mean'' of $f$ in $\Om$, which is defined (see \cite{IshMaWa_CVPDE2017}) as the unique minimizer of the problem
\begin{equation*}
	\inf_{\la \in \mathbb{R}} \nr f - \la \nr_{L^r(\Om)} .
\end{equation*} 
It is easy to check that in the case $r=2$ this recovers the classical mean $f_\Om$ of $f$ in $\Om$, that is,
$$
f_{2,\Om} = f_{\Om}:= \frac{1}{|\Om|}\int_{\Om} f \, dx .
$$

We stress that the proofs in \cite{Hurri_1988,Hurri_PAMS1994} (see, also \cite[Remark 2.4]{MaPogNearlyCVPDE2020} for details) allow to obtain explicit upper bounds for $\Tilde{C}_{r, p,\al}(\Om)$ depending only on $n,r,p,\al,$ and upper bounds for $b_0$ and the diameter $d_\Om$. For instance, in the case \eqref{eq:p and r as in Sobolev-Poincare}, we have that
\begin{equation}\label{eq:constant in HS Poincare}
	\Tilde{C}_{r, p,\al}(\Om) \le k_{n,r,p,\al} \,  b_0^n \, |\Om|^{\frac{1-\al}{n} - \frac{1}{p} + \frac{1}{r}}.
\end{equation}

In order to manipulate the left-hand side of \eqref{eq:Poincare from HS}, we notice that (see, e.g., \cite[Lemma 3.3]{Poggesi_AMPA2024}) for any $F \subset \Om$ with positive measure and for any $\la\in \RR$ we have that
\begin{equation}\label{eq: easy inequality mean}
	\nr f - f_{F} \nr_{L^r(\Om)} \le
	\left[ 1+ \left(\frac{|\Om|}{|F|}\right)^{\frac{1}{r}} \right] \nr f - \la \nr_{L^r(\Om)}
	\le 2 \left(\frac{|\Om|}{|F|}\right)^{\frac{1}{r}} \nr f - \la \nr_{L^r(\Om)} .
\end{equation} 
Here, in the second inequality we used that $|\Om|/|F| \ge 1$ as $F \subset \Om$.

Now, if $f$ satisfies $\mathcal{L} f=0$ in $\Om$, then, for $x_0 \in \Om$, the generalized mean value theorem proved by Caffarelli (see the Remark on page 9 in \cite{Caffarelli_FermiLectures1998} and Theorem 6.3 in \cite{BlankZheng_2015} for a careful full proof) ensures that there exist two constants $c$, $C$ that only depend on $n$, $\la$, and $\La$, and an increasing family of domains $D_r (x_0)$ which satisfy the properties
\begin{enumerate}[label=(\roman*)]
	\item $B_{c r}(x_0) \subset D_r (x_0) \subset B_{Cr}(x_0)$ ,
	\item $f(x_0) = f_{D_r(x_0)}$ for any $0 < r < \de_{\pa\Om} (x_0)/C$.
\end{enumerate}
Using \eqref{eq: easy inequality mean} with $F:= D_\frac{\de_{\pa\Om} (x_0)}{2C} (x_0)$ and $\la:= f_{r,\Om}$, we thus obtain that
\begin{equation}\label{eq:proof MVT + easy ineq}
\begin{split}
\nr f - f (x_0) \nr_{L^r(\Om)} 
& \le
2\, \left(\frac{|\Om|}{ \left| D_{\frac{\de_{\pa\Om} (x_0)}{2C}} (x_0) \right| }\right)^{\frac{1}{r}} \nr f - f_{r,\Om} \nr_{L^r(\Om)}
\\
& \le 
\left( \frac{ 2^{n+r}}{\omega_n} \right)^{\frac{1}{r}} \, \left( \frac{C}{c} \right)^{\frac{n}{r}}  \frac{|\Om|^{\frac{1}{r}}}{ \left( \de_{\pa\Om} (x_0) \right)^{\frac{n}{r}} } \,   \nr f - f_{r,\Om} \nr_{L^r(\Om)} ,
\end{split}
\end{equation}
where the second inequality follows by using that
$$
\left| D_{\frac{\de_{\pa\Om} (x_0)}{2C}} (x_0) \right| \ge \omega_n \left( \frac{c\, \de_{\pa\Om} (x_0)}{2C} \right)^n  \text{ since } D_\frac{\de_{\pa\Om} (x_0)}{2C} (x_0) \supset B_{\frac{c\, \de_{\pa\Om} (x_0)}{2C}} (x_0) .
$$

Combining \eqref{eq:Poincare from HS} and \eqref{eq:proof MVT + easy ineq}, we easily obtain that \eqref{eq:Sobolev Poincare ineq for general solution f} holds true for every function $f$  as in the assumptions of the Theorem.%-- with $f \in L^1_{loc}(\Om)$ and $\de_{\pa\Om}^\al \na f \in L^p(\Om)$ -- satisfying $\mathcal{L} f=0$ in $\Om$ and $f(x_0) = 0$.
 Moreover, the constant $C_{r,p,\al}(\Om,x_0)$ can be estimated from above in terms of $n, r, p, \al$, $\la$, $\La$,  $b_0$ and $d_\Om$, and (a lower bound on) $\de_{\pa\Om}(x_0)$ only.
For instance, in the case \eqref{eq:p and r as in Sobolev-Poincare}	(which is the one that will be used in the present paper), recalling \eqref{eq:Poincare from HS}, \eqref{eq:constant in HS Poincare}, and \eqref{eq:proof MVT + easy ineq} gives that
\begin{equation}\label{eq:explicit upper bound in Sobolev Poincare}
C_{r, p,\al}(\Om,x_0) \le k_{n,r,p,\al} \, \, \left( \frac{C}{c} \right)^{\frac{n}{r}} \left( \frac{b_0}{\de_{\pa\Om} (x_0)^\frac{1}{r}  }\right)^n \, |\Om|^{\frac{1-\al}{n} - \frac{1}{p} + \frac{2}{r}} ,	
\end{equation}
up to renaming the constant $k_{n,r,p,\al}$ (which only depends on $n,r,p,\al$). Here, $c$ and $C$, which only depend on $n$, $\la$, and $\La$, are exactly those appearing in \eqref{eq:proof MVT + easy ineq} and in the generalized mean value theorem \cite[Theorem 6.3]{BlankZheng_2015}.
\end{proof}

In order to prove our stability results, we also need the following interpolation inequality obtained in \cite{MaPog_MinE2023,MaPog_Ball_CVPDE2024}. 

As in \cite{MaPog_MinE2023}, we adopt the following definition of uniform interior cone condition. A bounded domain $E \subset \mathbb{R}^n$ satisfies the $(\theta , a)$-uniform interior cone condition if, for any $x\in \ol{E}$, there exists a cone $\mathcal{C}_x$ with height $a$ and opening width $\theta$, such that $\mathcal{C}_x \subset E$ and $\mathcal{C}_x \cap \pa E = \left\lbrace x \right\rbrace$.

Moreover, we say that a bounded domain $\Omega \subset \mathbb{R}^n$ is a $C_p$-Poincar\'e domain if 
\begin{equation*}
	\nr f - f_{\Omega} \nr_{L^p(\Omega)} \le C_{p} \nr \na f \nr_{L^p(\Omega)} \quad \text{ for any } f \in W^{1,p}(\Omega).
\end{equation*}

Taking into account \cite[Appendix A.2]{MaPog_Ball_CVPDE2024}, the results in \cite[Theorems 2.4 and 2.7]{MaPog_MinE2023} read as follows.

\begin{teorema}[{\cite[Theorems 2.4 and 2.7]{MaPog_MinE2023}} and {\cite[Appendix A.2]{MaPog_Ball_CVPDE2024}}]
	\label{thm:Interpolation from MPMine + MPCVPDE}
Let $1\le p < q \le \infty$. Let $\Omega \subset \RR^n$ be a bounded $C_p$-Poincar\'e domain satisfying the $(\theta , a)$-uniform interior cone condition. 

Then, setting
\begin{equation*}
	\al_{p,q} := \frac{p(q-n)}{n(q-p)},
\end{equation*}
we have that
\begin{equation}\label{eq:general interpolation inequality with Poincare constant}
	\max_{\ol{\Omega}} f - \min_{\ol{\Omega}} f \le C 
	\begin{cases}
		\nr \na f \nr_{L^p(\Omega)} \, & \text{ for any } f\in W^{1,p}(\Omega), \,  \text{ if } p>n ,
		\\
		\nr \na f \nr_{L^n(\Omega)} \log \left(e \frac{ |\Omega|^{\frac{1}{n} - \frac{1}{q}} \nr \na f \nr_{L^q(\Omega)} }{ \nr \na f \nr_{L^n(\Omega)} }\right) \, & \text{ for any } f\in W^{1,q}(\Omega), \,  \text{ if } p=n ,
		\\
		\nr \na f \nr_{L^q(\Om)}^{1-\al_{p,q}}	\nr \na f \nr_{L^p(\Om)}^{\al_{p,q}} \,  & \text{ for any } f\in W^{1,q}(\Omega), \,  \text{ if } 1 \le p < n ,
	\end{cases}
\end{equation}
where $C$ is a constant that can be explicitly estimated in terms of $n, p, q,$ $C_p$, $\te, a,$ and the diameter $d_\Om$. 
\end{teorema}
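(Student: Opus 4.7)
The plan is to establish the three regimes by combining a Morrey-type estimate on interior cones (giving the case $p>n$) with H\"older interpolation between $L^p$ and $L^q$ norms (to reach the cases $p\le n$). The $(\theta,a)$-cone condition supplies the geometry for explicit pointwise-to-integral control, while the $C_p$-Poincar\'e property upgrades cone averages to the global mean $f_\Omega$.

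For the super-critical case $p>n$, I would first prove, for any $x\in\ol{\Omega}$ and the cone $\mathcal{C}_x\subset\Omega$ at $x$ supplied by the cone condition, the standard Morrey-on-a-cone bound
\[
\bigl|f(x) - (f)_{\mathcal{C}_x}\bigr| \le C(n,p,\theta,a)\,\|\nabla f\|_{L^p(\mathcal{C}_x)},
\]
by parametrizing $\mathcal{C}_x$ in polar coordinates about $x$, integrating $\nabla f$ along each radius, and applying H\"older with $p>n$ so that the radial weight is integrable. Pairing such an estimate at two points $x_{\max},x_{\min}\in\ol{\Omega}$ where $f$ attains its extrema, and inserting the global mean $f_\Omega$ (whose distance from $(f)_{\mathcal{C}_x}$ is controlled by the $C_p$-Poincar\'e inequality together with H\"older on $\mathcal{C}_x$, using the uniform lower bound on $|\mathcal{C}_x|$ from $\theta$ and $a$), yields the first line of \eqref{eq:general interpolation inequality with Poincare constant}.

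For $1\le p<n$, the natural route is H\"older interpolation in one variable,
\[
\|\nabla f\|_{L^r(\Omega)} \le \|\nabla f\|_{L^p(\Omega)}^{\beta}\,\|\nabla f\|_{L^q(\Omega)}^{1-\beta},\qquad \tfrac{1}{r}=\tfrac{\beta}{p}+\tfrac{1-\beta}{q},
\]
fed into the super-critical bound at some $r>n$; the declared exponent $\al_{p,q}=\frac{p(q-n)}{n(q-p)}$ is exactly the value of $\beta$ at the borderline $r=n$, so the Morrey constant must be carefully tracked as $r\downarrow n^+$.

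The main obstacle -- and the source of the logarithmic correction -- is the borderline case $p=n$. There I would argue by gradient truncation at a threshold $\lambda$ to be optimized: on $\{|\nabla f|\le\lambda\}$ the contribution to the oscillation is estimated directly using $\|\nabla f\|_{L^n}$, while $|\{|\nabla f|>\lambda\}|$ is bounded by Chebyshev in $L^q$, so the tail contribution is controlled via $\|\nabla f\|_{L^q}$; optimizing in $\lambda$ produces the logarithm, whose argument $e\,|\Omega|^{1/n-1/q}\|\nabla f\|_{L^q}/\|\nabla f\|_{L^n}$ is exactly the scale at which the two contributions balance. An equivalent path via Trudinger exponential integrability reaches the same bound. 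The explicit tracking of constants in terms of $C_p,\theta,a,d_\Om$ is routine but tedious, which is why the result is imported from \cite{MaPog_MinE2023,MaPog_Ball_CVPDE2024} rather than reproved here.
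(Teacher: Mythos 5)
The paper does not reprove this theorem: it is stated as a black box and imported from \cite{MaPog_MinE2023} (Theorems 2.4 and 2.7) and \cite{MaPog_Ball_CVPDE2024} (Appendix A.2), so there is no in-paper proof to compare against. Evaluated on its own terms and against those references, your sketch is essentially correct for the super-critical case $p>n$ (Morrey on the interior cone plus the $C_p$-Poincar\'e inequality to compare cone averages with the global mean) and for the borderline case $p=n$ (a gradient-truncation or Trudinger argument does produce the logarithm). The gap is in the case $1\le p<n$.

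Your proposed route there --- interpolate $\|\nabla f\|_{L^r}$ between $\|\nabla f\|_{L^p}$ and $\|\nabla f\|_{L^q}$ for some $r>n$, feed it into the super-critical Morrey bound, and then push $r\downarrow n^+$ while tracking the Morrey constant --- cannot yield \eqref{eq:general interpolation inequality with Poincare constant} as stated, even with perfect constant tracking. Writing $A:=\|\nabla f\|_{L^p(\Om)}$, $B:=\|\nabla f\|_{L^q(\Om)}$, the interpolation exponent $\beta(r)=\frac{p(q-r)}{r(q-p)}$ satisfies $\al_{p,q}-\beta(r)\sim c\,(r-n)$ as $r\downarrow n$, while the Morrey constant on the cone blows up like $(r-n)^{-(n-1)/n}$. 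Optimizing $r$ against this trade-off produces a bound of the form $C\,\bigl(\log(B/A)\bigr)^{(n-1)/n}A^{\al_{p,q}}B^{1-\al_{p,q}}$, i.e.\ it leaves behind a spurious logarithmic factor that is not present in the theorem. In the stability application one takes $A\to 0$ with $B$ bounded, so the extra $\log(B/A)$ diverges and genuinely degrades the resulting rate. The cure --- and this is what the proof in the cited references does --- is not to interpolate the gradient norms at all, but to split \emph{within} the Morrey-type representation $|f(x)-f_{\mathcal{C}_x}|\le\frac{1}{|S_\theta|}\int_{S_\theta}\int_0^a|\nabla f(x+s\omega)|\,ds\,d\omega$ at an intermediate radius $\rho\in(0,a]$: apply H\"older with exponent $q$ on $\{s<\rho\}$ (giving $\rho^{1-n/q}B$) and with exponent $p$ on $\{s>\rho\}$ (giving $\rho^{1-n/p}A$), then optimize over $\rho$. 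Since $1-n/q>0>1-n/p$, the minimizer balances the two terms at $\rho\sim(A/B)^{pq/(n(q-p))}$ and yields exactly $A^{\al_{p,q}}B^{1-\al_{p,q}}$ with no logarithm. This split-at-$\rho$ device is the key missing idea; the rest of your outline (Poincar\'e for the cone average versus $f_\Omega$, uniform lower bound on $|\mathcal{C}_x|$ from $\theta$ and $a$, explicit tracking of constants) is as you describe.
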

\begin{oss}\label{rem:(iii)if r_i then b_0, theta, a, and C_p can be estimated (iv) for Poincare with L and r_i.}
{\rm (i) We recall that if a bounded domain $\Omega$ is a $b_0$-John domain then it is a $C_p$-Poincar\'e and the constant $C_p$ can be explicitly estimated in terms of $n, p , b_0,$ and $d_\Om$ (see, e.g., \cite{Bojarski1988,Hurri_1988} and \cite[(ii) of Remark 2.4]{MaPogNearlyCVPDE2020}).
	
	(ii) If $\Omega\subset\RR^n$ is a bounded domain satisfying the uniform interior sphere condition with radius $r_i$, then it is a $b_0$-John domain with
	$b_0 \le d_\Omega/r_i$ and it satisfies the $(\theta , a)$-uniform interior cone condition with $\theta:=\pi/4$ and $a:=r_i$, as it can be easily proved by the definitions.
	 
	(iii) Combining (i) and (ii) of the present remark, we have that, if $\Omega$ satisfies the uniform interior sphere condition with radius $r_i$, then the constant $C$ in \eqref{eq:general interpolation inequality with Poincare constant} can be explicitly estimated only in terms of $n, p, q,$ $r_i$, and $d_\Om$.
	
	(iv) Regarding Theorem \ref{thm:Poincare for solutions}, in the particular case where $\mathcal{L}:=L$ (where $L$ is that defined in \eqref{ell}) and $\Om$ satisfies the uniform interior sphere condition with radius $r_i$, recalling \eqref{eq:explicit uniform ellipticity 1}-\eqref{eq:explicit uniform ellipticity 2} and using that $b_0 \le d_\Om/r_i$, we have that the constant $C_{r,p,\al}(\Om, x_0)$ in \eqref{eq:Sobolev Poincare ineq for general solution f} can be explicitly estimated only in terms of $n,r,p,\al,$ $r_i$, $d_\Om$, and (a lower bound on) $\de_{\pa\Om}(x_0)$. In particular, an explicit estimate in the case \eqref{eq:p and r as in Sobolev-Poincare} can be deduced from \eqref{eq:explicit upper bound in Sobolev Poincare}.
}	
\end{oss}

%%%%%%%%%%%%%%%%%%%%%%%%%%%%%%%%%%%%%%%%%%%%%%%%%%%%%%%%%%%%%%%%%%%%%%%
%%%%%%%%%%%%%%%%%%%%%%%%%%%%%%%%%%%%%%%%%%%%%%%%%%%%%%%%%%%%%%%%%%%%%%%

\vspace{2mm}
\section*{Acknowledgements}
The first three authors have been partially supported by GNAMPA group of INdAM. \\
Nunzia Gavitone was supported by the Project MUR PRIN-PNRR 2022:  "Linear and Nonlinear PDE’S: New directions and Applications", P2022YFA.\\
During the preparation of this work, Alba Lia Masiello was partially supported by  the Project PRIN 2022, 20229M52AS: "Partial differential equations and related geometric-functional
inequalities," CUP:E53D23005540006. and by  "INdAM - GNAMPA Project"  CUP E5324001950001
\\
	Gloria Paoli was supported by "INdAM - GNAMPA Project", codice CUP E5324001950001
    and  by the Project PRIN 2022 PNRR:  "A sustainable and trusted Transfer Learning platform for Edge Intelligence (STRUDEL)", CUP E53D23016390001, in the framework of European Union - Next Generation EU program
\\
Giorgio Poggesi is supported by the Australian Research Council (ARC) Discovery Early Career Researcher Award (DECRA) DE230100954 “Partial Differential Equations: geometric aspects and applications” and is a member of the Australian Mathematical Society (AustMS).

\bibliographystyle{plain}
\bibliography{biblio}
\end{document}